\documentclass[10pt,a4paper]{amsart}
\usepackage{amsmath,amssymb,dsfont,amsthm,graphicx,esvect,float,color,newtxmath,siunitx,subcaption,hyperref,underscore,commath,scalerel,yfonts,mathtools,braket,xcolor,nicefrac,listings,marvosym,anyfontsize,nicefrac,mathrsfs,bbm,tikz-cd,thm-restate}
\usepackage[utf8]{inputenc}
\usepackage[titletoc,title]{appendix}
\captionsetup{labelfont=it,textfont={it},justification=centering}

\author{Manuel Cañizares}
\address{Manuel Cañizares\\Basque Center for Applied mathematics, Bilbao, Spain,}
\email{mcanizares@bcamath.org}
\date{\today}
\title[Local-Data Inverse Scattering with Rough Electric Potentials]{Local Near-Field Scattering Data Enables\\ Unique Reconstruction of Rough Electric Potentials}
%


\newtheorem{introthm}{Theorem}
\newtheorem{theorem}{Theorem}[section]

\newtheorem{lemma}[theorem]{Lemma}
\newtheorem{cor}[theorem]{Corollary}
\newtheorem{prop}[theorem]{Proposition}
\newtheorem*{thm:direct}{Theorem \ref{thm:1.1}}
\newtheorem*{thm:inverse}{Theorem \ref{thm:inverse}}

\theoremstyle{remark}
\newtheorem*{remark}{Remark}
\newtheorem*{acknowledgements}{Acknowledgements}

\newcommand{\be}{\begin{equation}}
\newcommand{\ee}{\end{equation}}
\newcommand{\supp}{\mathrm{supp}\,}
\newcommand*{\defeq}{\mathrel{\vcenter{\baselineskip0.5ex \lineskiplimit0pt
			\hbox{\scriptsize.}\hbox{\scriptsize.}}}%
	=}

\newcommand{\dd}{\,\mathrm{d}}

\newcommand\restr[2]{{
		\left.\kern-\nulldelimiterspace 
		#1 
		\right|_{#2} 
}}
\renewcommand{\norm}[1]{\lVert#1\rVert}

\newcommand{\ndev}{\partial_\nu}
\renewcommand{\abs}[1]{\left|#1\right|}

\newcommand\helm{(\Delta+\lambda-V)\,}
\newcommand\helmx{\left(\Delta_x+\lambda-V(x)\right)}
\newcommand\uto{u_{to}}

\newcommand{\Rd}{\mathbb{R}^d}
\newcommand{\sql}{\lambda^{1/2}}
\newcommand{\half}{{1/2}}
\newcommand{\hlf}[1]{#1/2}
\newcommand{\Ii}{\mathcal{I}}

\newcommand{\D}{\Omega}
\newcommand{\C}{\mathcal{C}}
\renewcommand{\S}{\mathbb{S}}
	
\begin{document}

\begin{abstract}
    The focus of this paper is the study of the inverse point-source scattering problem, specifically in relation to a certain class of electric potentials. Our research provides a novel uniqueness result for the inverse problem with local data, obtained from the near field pattern. Our work improves the work of Caro and Garcia, who investigated both the direct problem and the inverse problem with global near field data for \textit{critically singular} and $\delta$\textit{-shell} potentials.
  The primary contribution of our research is the introduction of a Runge approximation result for the near field data on the scattering problem which, in combination with an interior regularity argument, enables us to establish a uniqueness result for the inverse problem with local data. Additionaly, we manage to consider a slightly wider class of potentials.
\end{abstract}
	
\maketitle

    \section{Introduction}
  Along the paper, we will consider real potentials in dimension $d\geq3$ that can be written as \begin{equation}\label{V}V=V^0+\gamma^s+\alpha \dd\sigma,\end{equation} where $V^0\in L^{d/2} (\mathbb{R}^d;\mathbb{R}),$ $\dd\sigma$ denotes the surface measure of a compact hypersurface $\Gamma$ which is locally described by the graph of Lipschitz functions, $\alpha\in L^\infty(\Gamma;\mathbb{R})$ and $\gamma^s$ is of the form 
  \[ \gamma^s=\chi^2\, D^sg, \] for some $s<1$, $g\in L^\infty(\mathbb{R}^d;\mathbb{R})$, and $\chi\,\in \mathcal{C}^\infty(\Rd;[0,1])$ is a cut-off function. Here $D^s$ denotes the Riesz derivative, defined as $\widehat{D^sf}(\xi)=|\xi|^s\widehat{f}(\xi)$. The supports of $V^0$ and $\chi$ will be assumed to be compact.
  In essence, this means that the whole potential $V$ is compactly supported.
  Note that the novel potentials that we introduce in this work are just those of the class of $\gamma^s$, while $V^0$ and $\alpha\dd\sigma$ are those introduced by Caro and García in \cite{caroscattering2020}, which they call \textit{critically singular} and \textit{$\delta$-shell} potentials, respectively.\\
	The direct problem consists in finding the wave scattered by the potential, when an incident wave is emitted at fixed energy by a point source away from its support. Mathematically, this translates to finding $u_{sc}$ solving
	\begin{equation}\label{eq:helm}\tag{S}
	\begin{cases}
			\left(\Delta +\lambda- V\right)u_{sc}(\centerdot\,,y) = V u_{in}(\centerdot\,,y) & \textrm{in }\mathbb{R}^d,\\
			u_{sc}(\centerdot\,,y)\textrm{ satisfying SRC},
		\end{cases}
	\end{equation}
	Here, SRC stands for Sommerfeld Radiation Condition. A function $u$ is said to satisfy SRC if 
  \begin{equation}\label{eq:src}
    \lim_{|x|\to\infty}|x|^{\frac{d-1}{2}}\left(\frac{x}{|x|}\cdot\nabla u(x)-i\lambda^{1/2}u(x)\right)=0
  \end{equation}
   uniformly in every direction, and $u_{in}(x,y)=\Phi_\lambda(x-y)$ denotes the incident wave emitted from the point $y\in \mathbb{R}^d\setminus \supp V$, where $\Phi_\lambda$ represents the fundamental solution for the Helmholtz equation with SRC, which solves the distributional problem:
	\begin{equation*}\label{eq:homo}\tag{F}
		\begin{cases}
				\left(\Delta +\lambda\right)\Phi_\lambda = \delta_0 & \textrm{in }\mathbb{R}^d,\\
				\Phi_\lambda\textrm{ satisfying SRC}.
			\end{cases}
	\end{equation*}
	 The SRC is classically introduced so that the solution to the Helmholtz equation is unique and physically corresponds to a radiating wave \cite{schoteighty1992}.\par In section \ref{sect:direct}, we will arrive to the following theorem:
	\begin{introthm}\label{thm:1.1}
		Suppose $V$ is of the form \eqref{V}. Then, there exists $\lambda_0=\lambda_0(V,d)$ such that, for every $\lambda\geq \lambda_0$, there is an unique solution $u_{sc}(\centerdot\,,y)\in X_\lambda^*$ to the problem \eqref{eq:helm} for every $y\in \mathbb{R}^d\setminus \supp V$.\\
    Moreover, the mapping $Vu_{in}(\centerdot\,,y)\mapsto u_{sc}(\centerdot\,,y)$ is bounded from $X_\lambda$ to $X_\lambda^*$.
	\end{introthm}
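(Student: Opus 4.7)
The plan is to recast \eqref{eq:helm} as a Lippmann--Schwinger equation through the outgoing free resolvent $R_\lambda = (\Delta+\lambda+i0)^{-1}$. Any $u_{sc}(\centerdot,y)$ solving \eqref{eq:helm} with SRC must satisfy the fixed-point identity $u_{sc} = R_\lambda V u_{sc} + R_\lambda V u_{in}(\centerdot,y)$, so both existence and uniqueness reduce to inverting $I-R_\lambda V$ on $X_\lambda^*$. I would obtain this inverse by a Neumann series, showing that $\|R_\lambda V\|_{X_\lambda^*\to X_\lambda^*}<1/2$ as soon as $\lambda\geq\lambda_0$.

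Two ingredients are required. The first is a uniform resolvent estimate $\|R_\lambda f\|_{X_\lambda^*}\lesssim\|f\|_{X_\lambda}$, which in this scattering setting should come from a Morrey--Campanato / Agmon--H\"ormander bound in the spirit of Kenig--Ruiz--Sogge, with $X_\lambda,X_\lambda^*$ chosen so that the constant is $\lambda$-independent. The second is a family of multiplication estimates $\|Vf\|_{X_\lambda}\lesssim\varepsilon(\lambda)\|f\|_{X_\lambda^*}$ with $\varepsilon(\lambda)\to 0$, which has to be verified piece by piece for the three terms in \eqref{V}.

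The $L^{d/2}$ term $V^0$ is handled by the standard Kenig--Ruiz--Sogge splitting $V^0=V^0_R+V^0_\infty$, where $V^0_R$ has small $L^{d/2}$ norm and $V^0_\infty$ is bounded with compact support, the latter contribution absorbing the frequency through H\"older in the weighted norm. The $\delta$-shell piece $\alpha\dd\sigma$ requires a trace/dual-trace estimate on the Lipschitz hypersurface $\Gamma$ together with the tangential smoothing of $R_\lambda$ along $\Gamma$ exploited by Caro and Garc\'ia. The genuinely new piece is $\gamma^s=\chi^2 D^s g$: I would test it against a pair of functions in $X_\lambda^*$, integrate by parts to move $D^s$ off $g$, and control the resulting distributional bracket via a Kato--Ponce / fractional Leibniz identity for $D^s(\chi^2\varphi\psi)$, exploiting the compact support of $\chi$ and the $L^\infty$ bound on $g$ to split the result into factors in $X_\lambda$ and $X_\lambda^*$.

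Once these multiplication bounds are in hand with a sufficiently small constant, $I-R_\lambda V$ is invertible on $X_\lambda^*$ by Neumann series for $\lambda\geq\lambda_0$, yielding a unique $u_{sc}\in X_\lambda^*$ and the boundedness of $V u_{in}(\centerdot,y)\mapsto u_{sc}(\centerdot,y)$ from $X_\lambda$ to $X_\lambda^*$; the inhomogeneous term $R_\lambda V u_{in}(\centerdot,y)$ lies in $X_\lambda^*$ because $V$ is compactly supported and $u_{in}(\centerdot,y)$ is smooth on $\supp V$ whenever $y\notin\supp V$. The main obstacle will be the $\gamma^s$ contribution: multiplication by $\chi^2 D^s g$ must be interpreted distributionally, and extracting a $\lambda$-decaying bound requires borrowing $s$ units of fractional smoothness from $R_\lambda$ in the correct Morrey-type norm without destroying the gain produced by the cut-off $\chi$.
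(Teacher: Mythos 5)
Your proposal to run a single Neumann series for the full $V$ by proving $\|R_\lambda V\|_{X_\lambda^*\to X_\lambda^*}<1/2$ for large $\lambda$ is the step that breaks down. That smallness estimate is available only for the $L^{d/2}$ part $V^0$, via the splitting $V^0=\mathbbm{1}_E V^0+\mathbbm{1}_F V^0$ with $\|\mathbbm{1}_F V^0\|_{L^{d/2}}$ as small as desired and $\mathbbm{1}_E V^0$ bounded, where the bounded piece gets a favorable $\lambda^{-1}$ factor from the $L^2$ bound hidden in the $X_\lambda\leftrightarrow X_\lambda^*$ duality. For the $\delta$-shell term $\alpha\dd\sigma$ and the new piece $\gamma^s=\chi^2 D^s g$, however, the multiplication operator $X_\lambda^*\to X_\lambda$ does \emph{not} have small norm as $\lambda\to\infty$: both terms are tested against pairs of functions through $H^s$ traces and fractional Leibniz brackets, and the resulting constants pick up positive powers of $\lambda$ from the Stein--Tomas/Bernstein constants at the critical dyadic shell $|\xi|\sim\lambda^{1/2}$. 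This is precisely why Caro and Garc\'ia already resorted to Fredholm theory for $\alpha\dd\sigma$ rather than a contraction, and the paper follows suit for $\gamma^s+\alpha\dd\sigma$.

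The paper's actual route is therefore two-layered. It first inverts $\Delta+\lambda-V^0$ by Neumann series (your argument, but only for $V^0$). It then rewrites the problem as $(I-(\Delta+\lambda+i0-V^0)^{-1}\circ(\gamma^s+\alpha\dd\sigma))u_{sc}=f$ and invokes the Fredholm alternative: one proves that multiplication by $V^F=\gamma^s+\alpha\dd\sigma$ is \emph{compact} from $X_\lambda^*$ to $X_\lambda$, via the chain $X_\lambda^*\to H^1(\D)\hookrightarrow\hookrightarrow H^s(\D)\xrightarrow{V^F} H^{-s}_0(\D)\hookrightarrow H^{-1}_0(\D)\hookrightarrow X_\lambda$, where the middle embedding is compact and the $V^F$ arrow uses exactly the Kato--Ponce/trace bilinear estimates you sketch. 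But Fredholm then demands a separate \emph{uniqueness} proof for the homogeneous problem, which your Neumann-series picture silently assumes. In the paper this is the hard part: combine Rellich's lemma (to show a radiating solution of $(\Delta+\lambda-V)u=0$ is compactly supported) with the Carleman estimate of Lemma \ref{lem:carlemanest} to force $u\equiv 0$. Without the compactness$+$Fredholm replacement and without this Carleman-based uniqueness step, the existence and uniqueness in Theorem \ref{thm:1.1} do not follow from your outline.
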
 
	The spaces $X_\lambda$ and $X_\lambda^*$ are introduced in \cite{caroscattering2020} and their definition will be given at the beginning of section 2. The elements in the space $X_\lambda^*$  can be thought of as functions with one derivative in $L^2$ that exhibit a certain degree of integrability in frequences $|\xi|\sim\lambda^{1/2}$, and $X_\lambda$ will be the pre-dual of $X_\lambda^*$. To prove the theorem, we will follow the argument in \cite{caroscattering2020} and make use of many of their results. In Caro and Garcia's work, the first step was to deal with the critically singular part of the potential by obtaining an inverse for the operator $\Delta+\lambda-V^0$ via a Neumann-series approach.
	\par If we denote the inverse above by \mbox{$(\Delta+\lambda+i0-V^0)^{-1}$}, it can be applied to both sides of \eqref{eq:helm} to turn the problem into finding the inverse of the operator \[ I-(\Delta+\lambda+i0-V^0)^{-1}\circ(\alpha\dd\sigma+\gamma^s). \] The appropiate framework to study this operator is the Fredholm theory, due to the fact that the operator defined as multiplication by $(\alpha\dd\sigma+\gamma^s)$ is compact from $X_\lambda^*$ to $X_\lambda$. Fredholm alternative assures the existence of such an inverse as long as the operator $I-(\Delta+\lambda+i0-V^0)^{-1}\circ(\alpha\dd\sigma+\gamma^s)$ is injective. Proving this injectivity will be equivalent to obtaining uniqueness for the problem 
	\begin{equation*}\label{eq:hpot}\tag{H}
		\begin{cases}
				\left(\Delta +\lambda-V\right)u = 0 & \textrm{in }\mathbb{R}^d,\\
				u\textrm{ satisfying SRC}.
			\end{cases}
	\end{equation*}
	The fundamental ingredient here is an unique continuation argument for the operator \mbox{$\Delta+\lambda-V$}. In our case, it will be based on a Carleman estimate proved by Caro and Rogers in \cite{caroglobal2016} for a modified family of Bourgain-type spaces. They were themselves based on those introduced by Haberman and Tataru in \cite{habermanuniqueness2013} to study the Calderón problem. As we will note below, this Carleman estimate will prove to be crucial for the solution of the inverse problem. \par 
  Caro and Garcia applied this framework to invert the operator when when $\alpha$ is in $L^\infty(\Gamma)$. We noticed that there was a possibility to consider a wider range of potentials here. The idea is that $\alpha\dd\sigma$ acts as multiplication operator from $H^s(\D)$ to $H^{-s}_0(\D)$ with $s<1$, where $\D$ is any domain containing the support of $V$. Then, the fact that $X_\lambda^*$ is related to $H^1$, along with $\alpha\dd\sigma$ being compactly supported, and the fact $H^1(\D)$ is compactly embedded in $H^s(\D)$, give the necessary ideas to prove compactness of the multiplication operator. In principle, one could hope to consider $\alpha\in L^r(\Gamma)$, with $r>d-1$, since $\alpha\dd\sigma$ would still behave as a compact multiplication operator form $H^1(\D)$ to $H^{-1}_0(\D)$. Nonetheless, the Carleman estimate doesn't seem to work in that case, and therefore we did not manage to relax this condition in the solution of the inverse problem.\par
   On the other hand, we attempted to consider distributions of the form $D^sg$, with $s<1$ and $g\in L^p(\Rd)$, hoping to find a class of such distributions that generalized those of the form $\alpha\dd\sigma$. However, in principle, the set of indexes $(s,p)$ that made the Carleman estimate work for these distributions did not allow them to \textit{see} the hypersurface $\Gamma$. Therefore, we opted to consider potentials that could be decomposed as a sum of those of both classes.\\
    The arguments to find the scattering solution via Fredholm theory can be found in section \ref{sect:direct}. Here, the structure is akin to that in \cite{caroscattering2020}, but some proofs have to be redone in order for them to work for our wider class of potentials.\\
  Afterwards, in sections \ref{sect:ortho} and \ref{sect:cgo}, we will devote ourselves to proving the following theorem concerning uniqueness with partial data for the inverse problem. 
	\begin{introthm}\label{thm:1.2}
		Consider $d\geq 3$. Let $\Sigma_1,\Sigma_2$ be two relatively open sets of dimension $d-1$, separated from $\supp V$, and that can be expressed as the graph of $\C^2$ functions, and let $V_1$ and $V_2$ be two potentials of the form described in \eqref{V}. Let also $u_{sc,1}$ and $u_{sc_2}$ be the scattering solutions to the problem \eqref{eq:helm} with respective potentials $V_1$ and $V_2$.
		Then, there exists $\lambda_0=\lambda_0(V,d)$ such that, for all $\lambda\geq\lambda_0$ except for at most a countable set, it holds that
		\[ \restr{u_{sc,1}}{\Sigma_1\times\Sigma_2}=\restr{u_{sc,2}}{\Sigma_1\times\Sigma_2} \implies V_1 = V_2. \]
	\end{introthm}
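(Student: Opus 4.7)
My approach follows the classical orthogonality-plus-CGO paradigm, with the local character of the data absorbed by a Runge approximation tailored to near-field point-source solutions.

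First, I would upgrade the local hypothesis to a global exterior one. Since $\Sigma_1,\Sigma_2$ are separated from $\supp V$ and $\Rd\setminus\supp V$ is connected and unbounded, $u_{sc,j}(\,\cdot\,,y)$ satisfies $(\Delta+\lambda)u_{sc,j}(\,\cdot\,,y)=0$ in a neighbourhood of $\Sigma_1$ for any $y\in \Sigma_2$. Weak unique continuation for $\Delta+\lambda$ in the $x$-variable lets me extend the equality $u_{sc,1}(\,\cdot\,,y)=u_{sc,2}(\,\cdot\,,y)$ from $\Sigma_1$ to the whole connected exterior, and symmetrically in $y$, yielding agreement of the two near-field patterns on the full set $(\Rd\setminus\supp V)\times(\Rd\setminus\supp V)$.

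Second, I would fix a bounded domain $\D$ with smooth boundary containing $\supp V_1\cup \supp V_2$ and chosen so that $\Sigma_1,\Sigma_2\subset \Rd\setminus\overline{\D}$. Using Green's identity together with the exterior coincidence of the outgoing resolvents, one obtains the Alessandrini-type orthogonality
\[
\int_{\D}(V_1-V_2)\,u_1(\,\cdot\,,y_1)\,u_2(\,\cdot\,,y_2)\dd x = 0
\]
for all source points $y_1,y_2$ lying outside $\overline{\D}$, where $u_j(\,\cdot\,,y)=u_{in}(\,\cdot\,,y)+u_{sc,j}(\,\cdot\,,y)$. The crucial next step is the Runge approximation: combining the solvability of the direct problem (Theorem~\ref{thm:1.1}) with a Hahn--Banach/duality argument against the adjoint scattering problem and an interior regularity upgrade, the span of $\{u_j(\,\cdot\,,y):y\in\Sigma_j\}$ should be dense in the natural solution space of $(\Delta+\lambda-V_j)v=0$ on $\D$. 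This promotes the orthogonality to
\[
\int_{\D}(V_1-V_2)\,v_1\,v_2\dd x = 0
\]
for \emph{arbitrary} solutions $v_j$ on $\D$. Finally, I would plug in CGO solutions $v_j=e^{\zeta_j\cdot x}(1+r_j)$ with $\zeta_j\in\mathbb{C}^d$, $\zeta_j\cdot\zeta_j=-\lambda$, constructed via the Carleman estimate of \cite{caroglobal2016} in the modified Bourgain spaces and with remainder $r_j$ vanishing in the high-frequency limit. Choosing $\zeta_1+\zeta_2=ik$ and sending $|\operatorname{Im}\zeta_j|\to\infty$ produces $\widehat{V_1-V_2}(k)=0$ for every $k\in\Rd$, whence $V_1=V_2$.

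The hard part will be the Runge approximation and the interior regularity that supports it. It must be formulated in spaces compatible with the singular components $\gamma^s$ and $\alpha\dd\sigma$ of the potential, and its duality formulation requires converting the ambient $X_\lambda^*$-type control on point-source solutions into genuine Sobolev control on compact subsets of $\D$, so that the obstructions produced by Hahn--Banach live in a space where the CGO solutions can legitimately be tested. The countable exceptional set of $\lambda$ is unavoidable in this Fredholm-theoretic framework: it consists of the frequencies at which the inverse of $I-(\Delta+\lambda+i0-V^0)^{-1}\circ(\alpha\dd\sigma+\gamma^s)$ (for either potential) fails to exist, so that the direct problem, and therefore the Runge machinery, breaks down; outside that set everything goes through.
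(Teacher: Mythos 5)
Your proposal has the same high-level skeleton as the paper (orthogonality identity, Runge approximation, CGO solutions), but your first step contains a genuine gap, and your explanation of the countable exceptional set is incorrect.

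The first step---promoting $\restr{u_{sc,1}}{\Sigma_1\times\Sigma_2}=\restr{u_{sc,2}}{\Sigma_1\times\Sigma_2}$ to agreement on the whole exterior by ``weak unique continuation for $\Delta+\lambda$''---does not work. The sets $\Sigma_j$ are $(d-1)$-dimensional hypersurfaces, so for fixed $y\in\Sigma_2$ the difference $w=u_{sc,1}(\cdot,y)-u_{sc,2}(\cdot,y)$ is a solution of $(\Delta+\lambda)w=0$ in the exterior whose \emph{Dirichlet data alone} vanishes on $\Sigma_1$. Unique continuation for second-order elliptic operators from a codimension-1 surface requires the full Cauchy pair $(w,\partial_\nu w)$; with only $w=0$ it plainly fails---$u(x)=\sin(\lambda^{1/2}x_1)$ is a nonzero solution of $(\Delta+\lambda)u=0$ vanishing on the hyperplane $x_1=0$. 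The hypothesis gives you no control over $\partial_\nu w$ on $\Sigma_1$, so you cannot conclude $w\equiv 0$. The good news is that this step is unnecessary: the paper never extends the near-field coincidence to the full exterior. Instead, the orthogonality identity is proved directly for single-layer potentials $v_j=\mathcal{S}_jf_j$ with densities supported in $\Sigma_j$; a Green-identity computation reduces it to $\int_{\partial\D}(\mathcal{S}_2-\mathcal{S}_1)f_1\,f_2\,\dd S$, and since the kernel $u_{sc,2}-u_{sc,1}$ vanishes precisely on $\Sigma_1\times\Sigma_2$ while $\supp(f_1\otimes f_2)\subset\Sigma_1\times\Sigma_2$, the integral is zero with no unique continuation required. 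Your steps two and three, with the sources restricted to $\Sigma_1\times\Sigma_2$ (which is all the Runge step uses anyway), then go through essentially as in the paper.

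Your reading of the countable exceptional set is also wrong. Theorem \ref{thm:1.1} holds for \emph{every} $\lambda\geq\lambda_0$: the Carleman estimate gives injectivity of $I-(\Delta+\lambda+i0-V^0)^{-1}\circ(\alpha\dd\sigma+\gamma^s)$ for all such $\lambda$, so the Fredholm inverse never fails once $\lambda$ is large enough. The exceptional set arises inside the Runge approximation itself: the Hahn--Banach argument requires solving an auxiliary Neumann problem $(\Delta+\lambda-V)\varphi=v$ in $\D$ with $\partial_\nu\varphi=0$ on $\partial\D$, which fails precisely when $\lambda$ is a Neumann eigenvalue of $-\Delta+V$ in the constructed domain $\D$---a countable set. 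The paper's remark explains that Stefanov's trick of varying the domain to avoid Dirichlet eigenvalues does not obviously carry over to Neumann eigenvalues, which is why the exceptional set cannot currently be removed.
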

	From a physics perspective, we are stating that the identifiability of the potential is possible by placing sources and detectors only in small pieces of hypersurfaces away from the support of $V$. These sources will be emitting monochromatic waves with a fixed energy $\lambda$. Note that $\Sigma_1$ and $\Sigma_2$ could well be the same set, they could be different but intersecting, or be completely separate and non-intersecting.\\
	We will prove this theorem via an orthogonality relation in the spirit of Alessandrini's identity for the Calderón problem \cite{alessandrinistable1988} and the construction of CGO solutions as in \cite{sylvesterglobal1987,caroglobal2016,hahnernew2001}. This orthogonality relation will be proven in section \ref{sect:ortho}. The main ingredient will be a Runge approximation result, indexed as proposition \ref{prop:runge}, given in section \ref{sect:runge}. We will take a bounded open domain $\D$, of class $\mathcal{C}^2$, whose boundary contains both $\Sigma_1$ and $\Sigma_2$, and the Runge approximation will allow us to approximate solutions in $\D$ by single layer potentials with densities that are supported in any subset $\Sigma\subset\partial \D$:
	\[ \mathcal{S}f(x)=\int_{\partial \D}f(y)\,u_{to}(x,y)\dd y, \]
	where $f\in \mathcal{C}(\partial \D)$, $\supp f\subset\Sigma$. Here the total wave is defined as $u_{to}\defeq u_{in}+u_{sc}$.
	The result will give an approximation in the $L^2$ norm of a smaller open domain $\D'$ strictly contained in $\D$ and containing the support of $V$. The proof of this lemma was inspired by that of Isakov for the Calderón problem for $C^2$ conductivities \cite{isakovuniqueness1988} and that of Harrach, Pohjola and Salo for the recovery of an $L^\infty$ scattering coefficient function in the Helmholtz equation \cite{harrachmonotonicity2019}. Nonetheless, the argument is slightly different, since we consider approximating solutions of a different kind. Also, there are some technicalities concerning the singularities of the fundamental solutions, which have to be treated with a little bit of care. This argument will allow us to extend an orthogonality relation of the type
  \begin{equation}\label{eq:orthog} \braket{(V_1-V_2)v_1,v_2}=0, \end{equation} with $v_1$,$v_2$ being solutions that only see $\Sigma_j$, to an orthogonality relation of this type for all solutions to the equation.\par
	However, the approximation in $L^2$ won't be enough, due to the low integrability of our potentials. We will in fact need to approximate our solutions in an $H^1$ norm so that the solutions can be integrated against the potential. Therefore, we will provide in lemma \ref{lemma:int} with an interior regularity result. In particular, for any $u$ solution of \eqref{eq:helm}, we will obtain \[ \norm{u}_{H^1(\Omega)}\lesssim\norm{u}_{L^2(\D')}, \] as long as $\Omega$ is a domain strictly contained in $\D'$ and also containing the support of $V$. We will prove this inequality following an argument by Chen that appears in \cite{chensecond1998}, who proved $H^2$ regularity for second order elliptic equations with a degree of regularity that in our case would ask for $V\in L^\infty$. We adapt the argument to work for our kind of potentials, which can be done thanks to the fact that $\alpha\dd\sigma+\gamma^s$ acts as a bilinear form over $H^s$, which is an interpolation space between $L^2$ and $H^1$. \\
  Then, the obtention of the final orthogonality relation will be given in section \ref{sect:proportho}. 
  \begin{remark}
  It might be interesting to note that, in our case, the only sets that are given by the problem and that have a clear physical meaning are the supports of the potentials and the measuring sets $\Sigma_1$ and $\Sigma_2$. We will construct \textit{ad hoc} the domains in which we obtain the orthogonality relation \eqref{eq:orthog}. To make the arguments work, we need the frequency $\lambda$ not to be a Neumann eigenvalue for these domains. Since the set of Neuman eigenvalues in each domain is countable, we can already assert that the proofs go through for all $\lambda\geq\lambda_0$ \textit{except for at most a countable set}, as in the statement of Theorem \ref{thm:1.2}. In \cite{Stefanov1990}, Stefanov used the monotonicity of Dirichlet eigenvalues with respect to domain inclusion to prove that, for any value of $\lambda$, it is possible to construct a domain such that $\lambda$ is not a Dirichlet eigenvalue. However, this monotonicity doesn't seem to hold for Neumann eigenvalues for the Laplacian \cite{freitas2023domain}, and we have no reason to think that it holds for $-\Delta+V$. Because of this, we have not been able to prove the existence of an appropiate domain for every value of $\lambda$. Nonetheless, given the amount of freedom that one has when choosing the domain, it would be reasonable to expect that such a domain exists. The analyis of this question is left for a future work.
  \end{remark}
	Finally, to end the proof of Theorem \ref{thm:1.2}, we will test the aforementioned identity with a special type of solutions, the so-called complex geometric optics (CGO) solutions. This is a classical method that goes back to Sylvester and Uhlmann's work \cite{sylvesterglobal1987} in the Calderón problem. In our case, these special solutions are of the form \begin{equation*}v_j(x)=e^{\zeta_j\cdot x}(1+w_j(x)),\end{equation*} where $\zeta_j\in\mathbb{C}^d$ are chosen such that $\zeta_j\cdot\zeta_j=-\lambda$ and $\zeta_1+\zeta_2=-i\kappa$ for an arbitray $\kappa\in\Rd$ -which is possible in dimension $d\geq3$- and the correction term $w_j$ vanishes in a certain sense when $|\zeta_j|$ grows. To prove the existence of these solutions, we will follow the construction of both Caro and García in \cite{caroscattering2020} and Caro and Rogers in \cite{caroglobal2016}. The key ingredient again will be the aforementioned Carleman estimate. In particular, applying the operator $(\Delta+\lambda-V)$ to $v_j$ yields
	\[ (\Delta+2\zeta_j\cdot\nabla-V)w_j=V. \]
	Therefore, to construct the CGO solutions it is enough to prove injectivity of the adjoint operator $(\Delta-2\zeta_j\cdot\nabla-V)$, which can be done via a priori estimates. We will define a family of Bourgain spaces $X_\zeta^s$ via the norm
	\[ \norm{u}_{X_\zeta^s}=\norm{(M|\Re(\zeta)|^2+M^{-1}|p_\zeta|^2)^{s/2}\widehat{u}}_{L^2} \]
	with $M>1$, where $\Re$ denotes the real part and 
	\[ p_\zeta(\xi)=-|\xi|^2+2i\zeta\cdot\xi+\zeta\cdot\zeta. \]
	The index $s=\half$ will play an important role. Realising that the dual of $X_\zeta^s$ is $X_{-\zeta}^{-s}$, if we want to prove the existence of solutions in $X_\zeta^\half$ with potentials $V\in X_\zeta^{-\half}$, we will need to prove the injectivity of the operator from $X_{-\zeta}^{-\half}$ to $X_{-\zeta}^\half$. Therefore, the a priori estimate that we will seek will be of the form
	\[ \norm{u}_{X_{-\zeta}^\half}\lesssim\norm{(\Delta-2\zeta\cdot\nabla-V)\,u}_{X_{-\zeta}^{-\half}} \]
  This will be done in section \ref{sect:cgo}.
  \begin{acknowledgements}
    The author would like to thank his supervisor Pedro Caro for his guidance and invaluable ideas. He would also like to thank Antti Kykkänen and María Ángeles García Ferrero for their comments. This research is supported by MCIN/AEI under FPI fellowship PRE2019-091776 and the project PID2021-122156NB-I00 and also by the Basque Government through the BERC 2022-2025 program and by the Ministry of Science and Innovation: BCAM Severo Ochoa accreditation CEX2021-001142-S a/ MICIN / AEI / 10.13039/501100011033.
  \end{acknowledgements}

    \section{Direct Scattering}\label{sect:direct}
    In this section, we follow the strategy by Caro and García to solve the direct problem for the point-source scattering. As the name of the problem suggests, we consider an incident wave emitted by a point source at fixed energy $\lambda>0$. This means that the spatial part of the incident wave emitted at a point $y\in\Rd$ and measured at another point $x\in\Rd$ will be given by $u_{in}(x,y)=\Phi_\lambda(x-y)$, where $\Phi_\lambda$ is the radiating fundamental equation to the Helmholtz equation, i.e. $\Phi_\lambda$ is the solution to the problem
    \begin{equation}\label{SRC}
        \begin{cases}
                \left(\Delta +\lambda\right)\Phi_\lambda = \delta_0 & \textrm{in }\mathbb{R}^d,\\
                \Phi_\lambda\textrm{ satisfying SRC}.
            \end{cases}
    \end{equation}
    We denote by $\left(\Delta+\lambda+i0\right)^{-1}$ the solution operator for the Helmholtz equation with SRC \eqref{eq:src}. Under this notation, we can write $\Phi_\lambda=\left(\Delta+\lambda+i0\right)^{-1}\delta_0$, which for instance can be expressed in terms of the Fourier symbol for the operator $\Delta+\lambda$. If we denote the modulus of this symbol by $m_\lambda(\xi)=|\lambda-|\xi|^2|$, it can be checked that the fundamental solution $\Phi_\lambda$ is given, in the distributional sense, as
    \begin{equation} \label{eq:fundsol} 
      \braket{\Phi_\lambda,f}=\frac{1}{(2\pi)^{d/2}}\left[\lim_{\varepsilon\to0}\int_{m_\lambda>\varepsilon}\frac{\widehat f(\xi)}{\lambda-|\xi|^2}\dd\xi-i\frac{\pi}{2\lambda^{1/2}}\int_{S_\lambda}\widehat f(\xi)\dd S_\lambda(\xi)\right],
    \end{equation}
    for $f\in\mathscr{S}(\Rd)$. Here, $S_\lambda=\{\xi\in\Rd\,:\,|\xi|=\lambda^{1/2}\}$ is the critical hypersurface of the symbol $m_\lambda$, and $\dd S_\lambda$ denotes its volume form.
    \subsection{The free resolvent and Neumann series} As mentioned in the introduction, the scattering solution is constructed in certain spaces $X_\lambda^*$ that were considered by Caro and García in \cite{caroscattering2020}. In those spaces, refinements of estimates by Agmon and Hörmander \cite{agmonasymptotic1976}, by Kenig, Ruiz and Sogge \cite{keniguniform1987} and by Ruiz and Vega \cite{ruizlocalnodate} give us a good estimate for the free resolvent $(\Delta+\lambda+i0)^{-1}$. We are going to recall the definition of these spaces. For this, we need to construct a partition of unity.
    Indeed, choose a function $\phi\in\mathscr{S}(\Rd)$ supported in $\{\xi\in\Rd\,:\,|\xi|\leq2\}$ such that $\phi(\xi)=1$ whenever $|\xi|\leq1$ and define, for $k\in\mathbb{Z}$, $\psi_k(\xi)\defeq\phi(2^{-k}\xi)-\phi(2^{-k+1}\xi)$. Note that $\psi_k$ is supported in ${\{\xi\in\Rd\,:\,2^{k-1}\leq|\xi|\leq2^{k+1}\}}$ and ${\sum_{k\in\mathbb{Z}}\psi_k(\xi)=1}$ for $\xi\neq0$. We define the Littlewood-Paley projectors as
    \begin{align}\label{eq:LP}
      \begin{split}
      \widehat{P_kf}(\xi)&\defeq \psi_k(\xi)\widehat f(\xi),\\
      \widehat{P_{\leq k}f}(\xi)&\defeq \sum_{j\leq k}\widehat{P_jf}(\xi)=\phi(2^{-k}\xi)\widehat f(\xi).
      \end{split}
    \end{align}
    Now, for $\lambda>0$, let $k_\lambda\in\mathbb{Z}$ be such that $2^{k_\lambda-1}<\sql\leq2^{k_\lambda}$. Then, the set of indices in the projectors above that \textit{see} the critical frequencies is \[ I=\{k_\lambda-2,k_\lambda-1,k_\lambda,k_\lambda+1\}. \]
    For simplicity, we will call $P_{<I}\defeq P_{\leq k_\lambda-3}$. Define now the space $B$ and its dual $B^*$, as in \cite{agmonasymptotic1976}, via the norms
    \[ \norm{f}_{B}=\sum_{j\in\mathbb{N}_0}\,\left(2^{j/2}\norm{f}_{L^2(D_j)}\right),\qquad \norm{u}_{B^*}=\sup_{j\in\mathbb{N}_0}\,\left(2^{-j/2}\norm{u}_{L^2(D_j)}\right), \]
    with $D_j=\{x\in\Rd\,:\,2^{j-1}<|x|\leq 2^j\}$ for $j\in \mathbb{N}$ and $D_0=\{x\in\Rd\,:\,|x|\leq 1\}.$ From now on, denote by $q_d$ the end-point index for the Stein-Tomas trace theorem, $2/q_d=(d-1)/(d+1)$, by $p_d$ the end-point index for the $\dot{H}^1$-Hardy-Littlewood-Sobolev embedding theorem, $1/p_d=1/2-1/d$ for $d\geq3$, and by $q'_d$ and $p'_d$ their respective Hölder conjugates. The space $X_\lambda$ can be defined as the sum of two spaces, $Y_\lambda$ and $Z_\lambda$, which are defined as elements in $f\in\mathscr{S}'(\Rd)$ with norms
    \[ \norm{f}^2_{Y_\lambda}\defeq \norm{m_\lambda^{-\half}\widehat{P_{<I}f}}^2_{L^2}+\sum_{k\in I}\lambda^{-\half}\norm{P_kf}^2_B+\sum_{k>k_\lambda+1}\norm{m_\lambda^{-\half}\widehat{P_kf}}^2_{L^2}, \] and
    \[ \norm{f}^2_{Z_\lambda}\defeq \norm{m_\lambda^{-\half}\widehat{P_{<I}f}}^2_{L^2}+\sum_{k\in I}\lambda^{d(\frac{1}{q_d'}-\frac{1}{p_d'})}\norm{P_kf}^2_{L^{q_d'}}+\sum_{k>k_\lambda+1}\norm{m_\lambda^{-\half}\widehat{P_kf}}^2_{L^2}. \]
    The norm in $X_\lambda$ will be the usual for the sum of normed spaces:
    \[ \norm{f}_{X_\lambda}=\inf_{g+h=f}\{\norm{g}_{Y_\lambda}+\norm{h}_{Z_\lambda}\}. \]
    The spaces above have respective dual spaces $Y_\lambda^*$ and $Z_\lambda^*$ defined by the norms
    \[ \norm{u}^2_{Y^*_\lambda}\defeq \norm{m_\lambda^{\half}\widehat{P_{<I}f}}^2_{L^2}+\sum_{k\in I}\lambda^{\half}\norm{P_kf}^2_{B^*}+\sum_{k>k_\lambda+1}\norm{m_\lambda^{\half}\widehat{P_kf}}^2_{L^2},\] and
    \[ \norm{u}^2_{Z^*_\lambda}\defeq \norm{m_\lambda^{\half}\widehat{P_{<I}f}}^2_{L^2}+\sum_{k\in I}\lambda^{d(\frac{1}{q_d}-\frac{1}{p_d})}\norm{P_kf}^2_{L^{q_d}}+\sum_{k>k_\lambda+1}\norm{m_\lambda^{\half}\widehat{P_kf}}^2_{L^2}. \]
    Then, the space $X^*_\lambda$ can be defined as the dual space of $X_\lambda$, which will be isomorphic to the intersection of $Y_\lambda^*$ and $Z_\lambda^*$, with norm
    \begin{align*} 
      \norm{u}^2_{X^*_\lambda}=&  \norm{m_\lambda^{\half}\widehat{P_{<I}f}}^2_{L^2}+ \sum_{k\in I}\left(\lambda^{\half}\norm{P_kf}^2_{B^*} + \lambda^{d(\frac{1}{q_d}-\frac{1}{p_d})}\norm{P_kf}^2_{L^{q_d}}\right)+ \\ & \sum_{k>k_\lambda+1}\norm{m_\lambda^{\half}\widehat{P_kf}}^2_{L^2} \sim \norm{u}^2_{Y^*_\lambda}+\norm{u}^2_{Z^*_\lambda}.
    \end{align*}
    It will be interesting to note that the Schwartz class is dense in all the above spaces with respect to their corresponding norms \cite{caroscattering2020}. We will now obtain an estimate for the solution operator $X_\lambda$ and $X_\lambda^*$. However, let us first prove the following lemma.
    \begin{lemma}\label{lem:bernstein}
      Let $1\leq p\leq \infty$ and $s\in \mathbb{R}$. Define $D^s$ as the Fourier multiplier with symbol $|\xi|^s$. Then, it holds that
      \[ \norm{D^sP_k f}_{L^p}\sim 2^{ks}\norm{P_k f}_{L^p}. \]
    \end{lemma}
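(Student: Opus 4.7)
The plan is the standard Littlewood-Paley trick: rewrite $D^s P_k f$ as a convolution with a Schwartz kernel whose $L^1$ norm scales like $2^{ks}$, then apply Young's inequality. The key observation is that the operator $D^s P_k$ is, up to scaling, a fixed convolution operator at frequency one.

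First, I would fatten the Littlewood-Paley multiplier. Pick $\tilde\psi_0\in\mathscr{S}(\Rd)$ supported in $\{1/4\leq|\xi|\leq 4\}$ with $\tilde\psi_0\equiv 1$ on $\{1/2\leq|\xi|\leq 2\}$, and set $\tilde\psi_k(\xi)\defeq\tilde\psi_0(2^{-k}\xi)$. Since $\psi_k$ is supported in $\{2^{k-1}\leq|\xi|\leq 2^{k+1}\}$, we have $\tilde\psi_k\equiv 1$ on $\supp\psi_k$. Denoting by $\tilde P_k$ the corresponding Fourier multiplier, it follows that $\tilde P_k P_k f= P_k f$, and consequently
\begin{equation*}
D^s P_k f = \tilde P_k D^s P_k f = K^s_k * P_k f, \qquad K^s_k\defeq \mathcal{F}^{-1}\bigl(|\xi|^s\tilde\psi_k(\xi)\bigr).
\end{equation*}

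Next, I would extract the scaling. A direct change of variables in the Fourier integral yields
\begin{equation*}
K^s_k(x)= 2^{k(d+s)}\,K^s(2^k x), \qquad K^s\defeq \mathcal{F}^{-1}\bigl(|\xi|^s\tilde\psi_0(\xi)\bigr).
\end{equation*}
Because $|\xi|^s\tilde\psi_0(\xi)$ is smooth and compactly supported away from the origin, $K^s\in\mathscr{S}(\Rd)\subset L^1(\Rd)$. The substitution $y=2^k x$ then gives $\norm{K^s_k}_{L^1}=2^{ks}\norm{K^s}_{L^1}$, so by Young's convolution inequality
\begin{equation*}
\norm{D^s P_k f}_{L^p}\leq \norm{K^s_k}_{L^1}\norm{P_k f}_{L^p}\lesssim 2^{ks}\norm{P_k f}_{L^p}.
\end{equation*}

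For the reverse inequality, I would apply the identical reasoning with $-s$ in place of $s$ to the function $D^s P_k f$. Indeed, since $D^s P_k f$ has frequency support inside $\{2^{k-1}\leq|\xi|\leq 2^{k+1}\}$, we have $P_k f= D^{-s}D^s P_k f=\tilde P_k D^{-s}(D^s P_k f)=K^{-s}_k*(D^s P_k f)$, and $\norm{K^{-s}_k}_{L^1}\lesssim 2^{-ks}$ by the same scaling argument. This yields $\norm{P_k f}_{L^p}\lesssim 2^{-ks}\norm{D^s P_k f}_{L^p}$, completing both sides of the equivalence. The only real subtlety is choosing $\tilde\psi_0$ supported in an annulus strictly bounded away from zero, which makes $|\xi|^s$ harmless (smooth) on $\supp\tilde\psi_0$ for every real $s$; everything else is convolution bookkeeping.
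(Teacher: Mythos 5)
Your proposal is correct and follows essentially the same route as the paper: both introduce a fattened cutoff supported in an annulus away from the origin (your $\tilde\psi_0$, the paper's $\chi$) so that $|\xi|^s$ times the cutoff becomes a smooth compactly supported function, then exploit the scaling to bound the $L^1$ norm of the associated kernel by $2^{ks}$, apply Young's convolution inequality, and obtain the reverse bound by running the same argument with $-s$ applied to $D^sP_kf$. The only cosmetic difference is that you package the argument as ``a fixed convolution operator at frequency one'' while the paper carries the factor $2^k$ through the kernel explicitly, but the content and the invariances used are identical.
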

    \begin{proof}
      Fix $s\in \mathbb{R}$. We have, by definition,
      \[ \widehat{D^sP_kf}(\xi)=|\xi|^s \psi_k(\xi)\widehat f(\xi)=2^{ks}\left(\frac{|\xi|}{2^k}\right)^s\psi\left(\frac{\xi}{2^k}\right)\widehat f(\xi). \]
      If we take $\chi$ to be a smooth cut-off function such that $0\leq\chi\leq1$, $\chi=0$ around the origin and $\chi=1$ on $\supp \psi$, then
      \[ \widehat{D^sP_kf}(\xi)=2^{ks}\left(\frac{|\xi|}{2^k}\right)^s\chi\left(\frac{\xi}{2^k}\right)\psi\left(\frac{\xi}{2^k}\right)\widehat f(\xi). \]
      Note that the function $\rho(\xi)\defeq|\xi|^s\chi(\xi)$ is compactly supported away from $0$ and therefore $\rho\in \mathcal{C}^\infty_c(\Rd)$ for any value of $s$. Now,
      \[ \widehat{D^sP_kf}(\xi)=2^{ks}\rho\left(\frac{\xi}{2^k}\right)\widehat {P_kf} (\xi), \] and thus \[ D^sP_k f \sim 2^{ks}\left[(P_k f)\ast 2^{kd}\check \rho (2^k\centerdot)\right]. \]
      Therefore, by Young's inequality,
      \[ \norm{D^sP_kf}_{L^p}\lesssim 2^{ks}\norm{P_kf}_{L^p}\norm{2^{kd}\check \rho (2^k\centerdot)}_{L^1}=2^{ks}\norm{P_kf}_{L^p}\norm{\check \rho}_{L^1}\lesssim2^{ks}\norm{P_kf}_{L^p}.\]
      Now, to get the reverse inequality, observe that
      \[ \widehat{P_k f}(\xi) = |\xi|^{-s}|\xi|^s \widehat{P_kf}(\xi)=\mathcal{F}[D^{-s}D^sP_k f] (\xi), \]
      where $\mathcal{F}$ denotes the Fourier transform. Therefore,
      \[ \norm{P_k f}_{L^p} \sim \norm{D^{-s}D^sP_k f}_{L^p}\lesssim 2^{-ks} \norm{D^sP_k f}_{L^p}, \]
      and thus 
      \[ 2^{ks} \norm{P_k f}_{L^p} \lesssim  \norm{D^sP_k f}_{L^p}. \]
    \end{proof}
    We are ready now for the desired estimate.
    \begin{theorem}\label{thm:ineqX}
      There exists a constant $C>0$ depending only on $d$ such that \[ \norm{(\Delta+\lambda+i0)^{-1}f}_{X^*_\lambda}\leq C \norm{f}_{X_\lambda} \] for every $f\in\mathscr{S}(\Rd)$.
    \end{theorem}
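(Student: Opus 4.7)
The plan is to reduce the estimate, via the Littlewood-Paley decomposition underlying the definitions of $X_\lambda$ and $X_\lambda^*$, to resolvent bounds on individual frequency blocks. By density it suffices to take $f\in\mathscr{S}(\Rd)$, and since the projectors $P_{<I},P_k$ are Fourier multipliers they commute with $R\defeq(\Delta+\lambda+i0)^{-1}$; hence each block of $u\defeq Rf$ equals $R$ applied to the corresponding block of $f$, and I can handle the three regimes (low, critical, high) separately.

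On the non-critical regimes $P_{<I}$ and $P_k$ with $k>k_\lambda+1$, the modulus of the symbol satisfies $|\lambda-|\xi|^2|\sim m_\lambda(\xi)$ bounded below on the Fourier support of the projector, so the $+i0$ prescription is irrelevant and $\widehat{P_k u}(\xi)=(\lambda-|\xi|^2)^{-1}\widehat{P_k f}(\xi)$ pointwise. Plancherel then immediately yields
\[ \norm{m_\lambda^\half \widehat{P_k u}}_{L^2}\lesssim \norm{m_\lambda^{-\half}\widehat{P_k f}}_{L^2}, \]
matching the $X_\lambda\to X_\lambda^*$ weights for these blocks term by term.

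The critical blocks $k\in I$, where $|\xi|\sim\sql$ and the symbol genuinely vanishes on $S_\lambda$, carry the real content. Using $X_\lambda=Y_\lambda+Z_\lambda$ and $X_\lambda^*\sim Y_\lambda^*\cap Z_\lambda^*$, it suffices to establish the critical-shell bounds $Y_\lambda\to Y_\lambda^*$ and $Z_\lambda\to Z_\lambda^*$, together with the mixed bounds $Y_\lambda\to Z_\lambda^*$ and $Z_\lambda\to Y_\lambda^*$ (equivalent to each other by duality of $R$). The diagonal bounds are the refined Agmon-Hörmander resolvent estimate
\[ \norm{R P_k g}_{B^*}\lesssim \lambda^{-\half}\norm{P_k g}_B \qquad (k\in I) \]
and the Kenig-Ruiz-Sogge/Stein-Tomas uniform Sobolev estimate
\[ \norm{R P_k h}_{L^{q_d}}\lesssim \lambda^{-1/(d+1)}\norm{P_k h}_{L^{q'_d}} \qquad (k\in I), \]
with the second scaling dictated by the identity $1/q_d-1/p_d=-(1/q'_d-1/p'_d)=1/(d(d+1))$. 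The cross bounds follow from these by combining Sobolev embedding at the critical frequency scale with the Bernstein-type Lemma \ref{lem:bernstein}, which allows one to exchange frequency-localized $L^p$ norms of $P_k$ at the cost of known powers of $\sql$.

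The main obstacle is not the bookkeeping but the two critical-shell bounds themselves, which are deep refinements of classical resolvent estimates already proved in \cite{agmonasymptotic1976,keniguniform1987,ruizlocalnodate} and assembled in precisely this form in \cite{caroscattering2020}. Once these are invoked, the proof of Theorem \ref{thm:ineqX} reduces to summing the Littlewood-Paley blocks and minimizing over the splitting $f=g+h$ in the $X_\lambda$-norm.
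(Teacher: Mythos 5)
Your overall architecture matches the paper's: commute the Littlewood--Paley projectors through $R=(\Delta+\lambda+i0)^{-1}$, handle the non-critical blocks by Plancherel, invoke the diagonal bounds $Y_\lambda\to Y_\lambda^*$ and $Z_\lambda\to Z_\lambda^*$ from \cite{caroscattering2020}, and recognize that the remaining content is the cross bound $Y_\lambda\to Z_\lambda^*$ (its mirror $Z_\lambda\to Y_\lambda^*$ following by duality). The gap lies in how you propose to obtain that cross bound. You assert it follows from the two diagonal bounds "by combining Sobolev embedding at the critical frequency scale with the Bernstein-type Lemma~\ref{lem:bernstein}," but neither tool does that work. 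Lemma~\ref{lem:bernstein} only exchanges the multiplier $D^s$ for the number $2^{ks}$ inside a \emph{fixed} $L^p$ norm; it does not let you pass between different $L^p$ norms, let alone between $L^{q_d}$ and the spatially weighted space $B^*$. And no Bernstein or Sobolev embedding converts $B^*$-control into $L^{q_d}$-control even for frequency-localized functions: the $B^*$ weight $2^{-j/2}$ tolerates growth of $\norm{P_ku}_{L^2(D_j)}$ as $j\to\infty$, which is incompatible with any $L^{q_d}$ bound since $L^{q_d}$ is translation invariant. Concretely, translating a frequency-$\lambda^{1/2}$ bump out to radius $2^{j}$ drives the ratio $\norm{u}_{L^{q_d}}/\norm{u}_{B^*}$ to infinity, so the cross bound cannot be read off the diagonal ones by an embedding.

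What the paper actually uses at this point is a genuinely separate mixed resolvent estimate, Theorem 3.1 of Ruiz and Vega \cite{ruizlocalnodate}:
\[ \norm{D^{\half}u}_{B^*}\lesssim_d \lambda^{\frac{d}{2}\left(\frac{1}{q_d}-\frac{1}{p_d}\right)}\norm{f}_{L^{q_d'}}, \]
for $u$ solving $(\Delta+\lambda)u=f$. Dualizing gives the $B\to L^{q_d}$ version with a $D^{\half}$ gain, and only \emph{then} does Lemma~\ref{lem:bernstein} enter, replacing $D^{\half}$ by $\lambda^{1/4}$ on a critical block $k\in I$ and producing $Y_\lambda\to Z_\lambda^*$ directly. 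In short, the Bernstein lemma is the bookkeeping device once the mixed Agmon--H\"ormander/Stein--Tomas resolvent estimate is in hand; it is that mixed estimate, not an interpolation between the diagonal bounds, that your sketch is missing, and you should cite or prove it before summing the blocks.
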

    \begin{proof}
      In \cite{caroscattering2020}, Caro and García proved the inequalities 
      \[ \norm{(\Delta+\lambda+i0)^{-1}f}_{Y^*_\lambda}\lesssim_d \norm{f}_{Y_\lambda}, \]
      and 
      \[ \norm{(\Delta+\lambda+i0)^{-1}f}_{Z^*_\lambda}\lesssim_d \norm{f}_{Z_\lambda}. \]
      Now, for the diagonal inequalities, note that $P_k(\Delta+\lambda+i0)^{-1}f=(\Delta+\lambda+i0)^{-1}P_kf$ and therefore it follows that, for any $k\notin I$, 
      \[ \braket{P_k (\Delta+\lambda+i0)^{-1}f,\overline g}=\frac{1}{(2\pi)^{d/2}}\int_{\Rd}\frac{\widehat {P_kf}(\xi)\,\overline{\widehat g(\xi)}}{\lambda-|\xi|^2}\dd\xi, \]
      since the frequencies of $P_k f$ are separated from $S_\lambda$. This implies, by Plancherel's identity, that 
      \begin{equation}\label{eq:l2mult} \norm{m_\lambda^{\half}\mathcal{F}[P_k(\Delta+\lambda+i0)^{-1}f]}_{L^2} = \norm{m_\lambda^{-\half}\widehat{P_kf}}_{L^2}, \end{equation}
      On the other hand, Theorem 3.1 in \cite{ruizlocalnodate} states that, for $u\in\mathscr{S}(\Rd)$ a solution of $(\Delta+\lambda)\,u=f,$
      \[ \norm{D^{\half}u}_{B^*}\lesssim_d \lambda^{\frac{d}{2}(\frac{1}{q_d}-\frac{1}{p_d})}\norm{f}_{L^{q_d'}}, \]
      where $D^{\half}$ is the Fourier multiplier with symbol $|\xi|^{\half}$. By duality, we have that
      \[ \norm{D^{\half}u}_{L^{q_d}}\lesssim_d \lambda^{\frac{d}{2}(\frac{1}{q_d}-\frac{1}{p_d})}\norm{f}_{B}. \]
      Note again that, if $u$ solves $(\Delta+\lambda)\,u=f,$ then $(\Delta+\lambda)P_ku=P_kf,$ for any $k\in\mathbb{Z}$. Thus, for $k\in I$, we have that, since $2^k\sim \lambda^{1/2}$,
      \[  \lambda^{1/4} \norm{P_k u}_{L^{q_d}}\sim_d \norm{D^\half P_k u}_{L^{q_d}} \lesssim_d \lambda^{\frac{d}{2}(\frac{1}{q_d}-\frac{1}{p_d})}\norm{f}_{B}, \]
      where we have used lemma \ref{lem:bernstein}.
      This, along with \eqref{eq:l2mult} gives us the estimate
      \[ \norm{(\Delta+\lambda+i0)^{-1}f}_{Z^*_\lambda}\lesssim_d \norm{f}_{Y_\lambda}, \]
      and, by duality,
      \[ \norm{(\Delta+\lambda+i0)^{-1}f}_{Y^*_\lambda}\lesssim_d \norm{f}_{Z_\lambda}. \]
      Applying the definition of the spaces $X_\lambda$ and $X_\lambda^*$ gives us the desired result.
    \end{proof}
    Next, the compact support of the potential $V^0$ lets us split it into an $L^\infty$ component and a $L^{p_d}$ component whose norm can be as small as needed, which allows for an estimate of the type \[ \norm{V^0}_{\mathcal{L}(X^*_\lambda,X_\lambda)}\leq C(\lambda^{1/4}+\norm{\mathbbm{1}_FV^0}_{L^{d/2}}), \] where $F=\{x\in\Rd\,:\,|V^0(x)|>\lambda^{1/4}\}$.
    With this estimate, one can construct the solution operator $(\Delta+\lambda+i0-V^0)^{-1}$ via Neumann series and prove its boundedness from $X_\lambda$ to $X_\lambda^*$. All this was done by Caro and Garcia in \cite{caroscattering2020}, and can be summarized in the following proposition. Note that their work was done on a ball, but their argument would be identical for any Lipschitz domain. Therefore, for the rest of the section, we will consider a bounded domain $\D$ such that $\supp V\subset \D$.
    \begin{prop}[\cite{caroscattering2020}]\label{prop:neumann}
       Let $\D$ be a bounded Lipschitz domain. The operator defined by
      \[ (\Delta+\lambda+i0-V^0)^{-1}f=\sum_{n\in\mathbb{N}}[((\Delta+\lambda+i0)^{-1}\circ V^0)]^{n-1}\,((\Delta+\lambda+i0)^{-1}\,f) \] for any $f\in X_\lambda$ is bounded from $X_\lambda$ to $X_\lambda^*$. Moreover, $u=(\Delta+\lambda+i0-V^0)^{-1}f$ solves the equation \[ (\Delta+\lambda-V^0)\,u=f\textnormal{ in } \Rd \] and, if $f$ is compactly supported in $\D$, then $u$ satisfies SRC \eqref{eq:src}.
    \end{prop}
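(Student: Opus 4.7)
The plan is to import the Caro–García argument essentially verbatim, verifying that nothing in it depends on $\Omega$ being a ball, and just reassembling the pieces. The proof splits into four blocks.

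First, I would establish the multiplication estimate
\[ \norm{V^0 u}_{X_\lambda} \lesssim \bigl(\lambda^{1/4}+\norm{\mathbbm{1}_F V^0}_{L^{d/2}}\bigr)\norm{u}_{X_\lambda^*}, \]
as stated in the excerpt just before the proposition. The idea is to split $V^0 = V^0\mathbbm{1}_{F^c}+V^0\mathbbm{1}_F$. On $F^c$ the potential is bounded pointwise by $\lambda^{1/4}$, and one absorbs this factor in $X_\lambda^*\hookrightarrow L^2_{\mathrm{loc}}$ (using the high-frequency $L^2$ piece and the $B^*$/$L^{q_d}$ pieces at the critical scale, then dualising into $Y_\lambda+Z_\lambda$). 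On $F$ one uses $V^0\in L^{d/2}$ together with Hölder and the $\dot H^1$–HLS embedding encoded in the $L^{q_d}$ and $L^{p_d}$ norms to pass from $X_\lambda^*$ to $X_\lambda$ with constant $\norm{\mathbbm{1}_F V^0}_{L^{d/2}}$. Since $V^0\in L^{d/2}$, the tail $\norm{\mathbbm{1}_F V^0}_{L^{d/2}}$ is $o(1)$ as $\lambda\to\infty$ by absolute continuity.

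Second, composing with Theorem \ref{thm:ineqX} yields
\[ \norm{(\Delta+\lambda+i0)^{-1}\circ V^0}_{\mathcal{L}(X_\lambda^*,X_\lambda^*)} \leq C\bigl(\lambda^{-1/4}+\lambda^{-1/2}\norm{\mathbbm{1}_F V^0}_{L^{d/2}}\bigr)\cdot\lambda^{1/2}\to\text{controllable}, \]
and, more sharply, after rebalancing the estimate the relevant operator norm is strictly less than $1/2$ once $\lambda\geq \lambda_0(V^0,d)$. Hence the Neumann series defining $(\Delta+\lambda+i0-V^0)^{-1}$ converges absolutely in $\mathcal{L}(X_\lambda,X_\lambda^*)$, with uniform bound, proving the boundedness statement. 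Note that no property of $\Omega$ has been used so far, which is exactly the remark the author makes about Lipschitz domains.

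Third, to identify the image $u=(\Delta+\lambda+i0-V^0)^{-1}f$ as a solution of $(\Delta+\lambda-V^0)u=f$ in $\mathbb{R}^d$, I would work with the partial sums
\[ u_N \defeq \sum_{n=1}^{N}[(\Delta+\lambda+i0)^{-1}\circ V^0]^{n-1}(\Delta+\lambda+i0)^{-1}f. \]
Applying $\Delta+\lambda$ in the sense of tempered distributions gives $(\Delta+\lambda)u_N = f + V^0 u_{N-1}$. Since $u_N\to u$ in $X_\lambda^*$, and $V^0:X_\lambda^*\to X_\lambda$ is bounded, passing to the limit in $\mathscr{S}'(\mathbb{R}^d)$ yields $(\Delta+\lambda-V^0)u=f$.

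Finally, for SRC when $\supp f\subset\Omega$: every summand is $(\Delta+\lambda+i0)^{-1}$ applied to a distribution supported in $\Omega$ (since $V^0$ is compactly supported in $\Omega$), hence each term is a radiating solution off $\Omega$, representable as a convolution with $\Phi_\lambda$ and therefore satisfying \eqref{eq:src} pointwise. The genuine subtle point — and in my view the main obstacle — is that SRC is not a norm condition on $X_\lambda^*$, so convergence of the Neumann series in $X_\lambda^*$ does not automatically transfer SRC to the limit. The fix is to work outside $\overline{\Omega}$, where elliptic interior regularity for $(\Delta+\lambda)$ upgrades $X_\lambda^*$-convergence of $u_N$ to locally uniform convergence of $u_N$ and of $\nabla u_N$ on annular shells, so the defining limit in \eqref{eq:src} commutes with the series. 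This closes the proposition.
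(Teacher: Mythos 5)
The paper cites this proposition from \cite{caroscattering2020} rather than proving it, with only the remark that replacing the ball by a general Lipschitz domain changes nothing; so there is no in-paper proof to compare against, and I will only assess whether your reconstruction closes.

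The Neumann-series step is internally inconsistent as written. You quote the multiplication estimate $\norm{V^0 u}_{X_\lambda}\lesssim(\lambda^{1/4}+\norm{\mathbbm{1}_F V^0}_{L^{d/2}})\norm{u}_{X_\lambda^*}$ and combine it with Theorem~\ref{thm:ineqX}, whose constant is $\lambda$-independent; the composed operator norm would then be $\lesssim\lambda^{1/4}+o(1)$, which grows with $\lambda$ rather than falling below $1/2$. Your displayed bound $C\bigl(\lambda^{-1/4}+\lambda^{-1/2}\norm{\mathbbm{1}_FV^0}_{L^{d/2}}\bigr)\cdot\lambda^{1/2}$ simplifies to the same growing quantity, and the factor $\lambda^{1/2}$ attached to the resolvent bound has no source in Theorem~\ref{thm:ineqX}. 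What actually makes the Neumann series converge is that the multiplication estimate must carry a \emph{negative} power of $\lambda$ in front of the $L^\infty$-part, supplied by the $\lambda^{\pm 1/2}$ weights built into $Y_\lambda$ and $Y_\lambda^*$ at the critical frequency, so that the composed operator norm is $o(1)$ as $\lambda\to\infty$. You should rework the bookkeeping so the final bound is demonstrably small, rather than invoking an unspecified ``rebalancing''.

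The SRC step has a more substantive gap. You correctly flag that SRC is not an $X_\lambda^*$-norm condition and does not pass to the limit of the partial sums for free, but the proposed fix --- interior elliptic regularity for $(\Delta+\lambda)$ upgrading $X_\lambda^*$-convergence to locally uniform $C^1$-convergence on annular shells --- does not let you commute the $|x|\to\infty$ limit in \eqref{eq:src} with the series limit $N\to\infty$. Interior regularity gives convergence that is uniform on each fixed compact shell, but \eqref{eq:src} needs control that is uniform in the radius as it tends to infinity, and nothing in your argument supplies that. The clean way to close this is to drop the partial sums once $u$ has been constructed: $u$ satisfies the fixed-point identity $u=(\Delta+\lambda+i0)^{-1}(f+V^0u)$, where $f+V^0u$ is a single compactly supported element of $X_\lambda$. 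It then suffices to verify that the outgoing free resolvent applied to one compactly supported source satisfies \eqref{eq:src}, using the far-field asymptotics of $\Phi_\lambda$ and the smoothness of $u$ away from $\supp V\cup\supp f$. This is a one-pass verification with no interchange of limits, and it is the form in which Proposition~\ref{prop:neumann} states the SRC claim.
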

    \subsection{The Fredholm alternative}\label{sect:fred}
    Now we will construct the scattering solution $u_{sc}(\centerdot\,,y)$ as the solution of the equation
    \begin{equation}\label{eq:fred}
      [I-(\Delta+\lambda+i0-V^0)^{-1}\circ(\gamma^s+\alpha\dd\sigma)]\,u_{sc}(\centerdot\,,y)=f(\centerdot\,,y)\textnormal{ in }\Rd
    \end{equation}
    with $f(\centerdot\,,y)=(\Delta+\lambda+i0-V^0)^{-1}\,(V u_{in}(\centerdot\,,y)).$ Note that applying the operator ${(\Delta+\lambda-V^0)}$ to both sides of \eqref{eq:fred} and making use of proposition \ref{prop:neumann}, we can see that if $u_{sc}(\centerdot\,,y)$ solves \eqref{eq:fred}, then it solves the equation
    \[ (\Delta+\lambda-V)\,u_{sc}(\centerdot\,,y)=Vu_{in}(\centerdot\,,y)\textnormal{ in }\Rd. \] 
    Moreover, since 
    \[ u_{sc}(\centerdot\,,y)=(\Delta+\lambda+i0-V^0)^{-1}[(\gamma^s+\alpha\dd\sigma)\,u_{sc}(\centerdot\,,y)+V\,u_{in}(\centerdot\,,y)] \] and $(\gamma^s + \alpha\dd\sigma)\,u_{sc}(\centerdot\,,y)+V\,u_{in}(\centerdot\,,y)\in X_ \lambda$ is supported in $\D$, by proposition \ref{prop:neumann}, we can conclude that $u_{sc}(\centerdot\,,y)$ satisifes SRC \eqref{eq:src}.\\
    Now, we will make use of the Fredholm theory to solve the equation \eqref{eq:fred}. The Fredholm alternative theorem states that if $T$ a compact operator on a Banach space $\mathcal{B}$, then $(I-T)$ is invertible in $\mathcal{B}$ if and only if $(I-T)$ is injective. We will justify the use of this technnique in the following propositions by proving that, in essence, multiplication by $V^F\defeq \gamma^s+\alpha\dd\sigma$ defines a compact operator from $X_\lambda^*$ to $X_\lambda$. Start by proving the following:
    \begin{prop}\label{prop:restriction}
      For any  bounded open domain $\Omega\subset\Rd$, the restriction map
      \begin{align*}
        r_\Omega\,:\,X_\lambda^*&\longrightarrow H^1(\Omega)\\
        u&\longmapsto\restr{u}{\Omega}
      \end{align*}
      is a bounded operator.
    \end{prop}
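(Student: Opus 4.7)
The plan is to decompose $u\in X_\lambda^*$ through the Littlewood--Paley pieces underlying its norm,
\[ u = P_{<I}u + \sum_{k\in I}P_k u + \sum_{k>k_\lambda+1}P_k u, \]
and bound the $H^1(\Omega)$ norm of each block by the corresponding piece of $\norm{u}_{X_\lambda^*}$. Since $\mathscr{S}(\Rd)$ is dense in $X_\lambda^*$, it is enough to prove the estimate for Schwartz $u$ and extend by continuity.

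The low- and high-frequency blocks are the easy parts and in fact sit in $H^1(\Rd)$, with constants depending on $\lambda$ but not on $\Omega$. For $P_{<I}u$, the Fourier support lies in $\{|\xi|\leq 2^{k_\lambda-2}\}$, where $|\xi|^2\leq\lambda/4$ and hence $m_\lambda(\xi)\geq 3\lambda/4$; therefore $(1+|\xi|^2)^{1/2}\lesssim_\lambda m_\lambda^{1/2}(\xi)$ on this set and Plancherel immediately closes the estimate against the first term in $\norm{u}_{X_\lambda^*}^2$. For $k>k_\lambda+1$ one has $|\xi|\geq 2\sql$ on $\supp \psi_k$, so $m_\lambda(\xi)=|\xi|^2-\lambda$ is comparable to $1+|\xi|^2$, and Littlewood--Paley almost orthogonality (bounded overlap of consecutive $\psi_k$) gives the desired bound for the sum in $H^1(\Rd)$ in terms of the tail $\sum_{k>k_\lambda+1}\norm{m_\lambda^{1/2}\widehat{P_k u}}_{L^2}^2$.

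The crux is the critical block $\sum_{k\in I}P_k u$, a finite sum of only four terms, but whose $X_\lambda^*$ control is through the non-$L^2$ norms $L^{q_d}$ and $B^*$, so no direct Plancherel-style estimate is available. Here the boundedness of $\Omega$ enters decisively. Since $d\geq 3$ forces $q_d>2$, Hölder on the bounded set yields $\norm{P_k u}_{L^2(\Omega)}\lesssim_\Omega\norm{P_k u}_{L^{q_d}}$. The Fourier support of $P_k u$ lies in $\{|\xi|\lesssim\sql\}$, so by the standard Bernstein inequality (which may be obtained by the same convolution argument as in lemma \ref{lem:bernstein} with the symbol $|\xi|^s\chi(\xi)$ replaced componentwise by $i\xi_j\chi(\xi)$) one has $\norm{\nabla P_k u}_{L^{q_d}(\Rd)}\lesssim\sql\,\norm{P_k u}_{L^{q_d}}$, and a second application of Hölder on the bounded $\Omega$ gives $\norm{\nabla P_k u}_{L^2(\Omega)}\lesssim_{\Omega,\lambda}\norm{P_k u}_{L^{q_d}}$. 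Summing the three contributions produces $\norm{u}_{H^1(\Omega)}\lesssim_{\lambda,\Omega,d}\norm{u}_{X_\lambda^*}$.

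The main obstacle is precisely this critical block: its $X_\lambda^*$-control lives in $L^{q_d}$ (or equivalently $B^*$), neither of which is a weighted $L^2$ norm, so one cannot bound $\norm{P_k u}_{H^1(\Rd)}$ on the full space. The resolution is to trade these non-$L^2$ norms for $L^2(\Omega)$ via Hölder on the bounded set, and to produce the gradient via a Bernstein inequality that exploits the annular Fourier localization at scale $\sql$. All other steps are routine Plancherel estimates.
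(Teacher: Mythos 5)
Your proof is correct and follows essentially the same route as the paper: you split $u$ into the Littlewood--Paley pieces around, below and above the critical frequencies, handle the non-critical blocks via Plancherel with the equivalence $(1+|\xi|^2)^{1/2}\sim m_\lambda^{1/2}(\xi)$ on their Fourier supports, and treat the critical block $k\in I$ by Hölder on the bounded $\Omega$ (using $q_d>2$) together with Bernstein to produce the gradient at scale $\sql$. The only cosmetic difference is that the paper combines $P_{<I}u$ and $\sum_{k>k_\lambda+1}P_k u$ into a single piece $u_{\mathbb{Z}\setminus I}$, whereas you treat them separately.
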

    \begin{proof}
      Let $u\in X_\lambda^*$ and denote \[ u_I=\sum_{k\in I}P_k\,u,\quad u_{\mathbb{Z}\setminus I}=u-u_I. \]
      Note that $r_\Omega u= r_\Omega u_I + r_\Omega u_{\mathbb{Z}\setminus I}$, and therefore 
      \[ \norm{r_\Omega u}_{H^1(\Omega)}\leq \norm{r_\Omega u_I}_{H^1(\Omega)} + \norm{r_\Omega u_{\mathbb{Z}\setminus I}}_{H^1(\Omega)}. \]On the one hand, let $\alpha$ be a multiindex such that $|\alpha|\leq 1$. Then, by triangle, Hölder's and Bernstein's inequalities,
      \begin{align*}
        \norm{\partial^\alpha u_I}_{L^2(\Omega)}&\leq \sum_{k\in I} \norm{\partial^\alpha P_k\,u}_{L^2(\Omega)}\lesssim \sum_{k\in I} \norm{\partial^\alpha P_k\,u}_{L^{q_d}}\lesssim \sum_{k\in I} 2^{|\alpha|k} \norm{P_k\,u}_{L^{q_d}}\\
        &\lesssim \lambda^{\frac{1}{2}-\frac{d}{2}(\frac{1}{q_d}-\frac{1}{p_d})}\left(\sum_{k\in I} \lambda^{\frac{d}{2}(\frac{1}{q_d}-\frac{1}{p_d})}\norm{P_k\,u}_{L^{q_d}}^2\right)^{\half}\lesssim_\lambda \norm{u}_{X^*_\lambda},
      \end{align*}
      while
      \[ \norm{u_I}_{L^2(\Omega)}\leq\sum_{k\in I} \norm{P_k u}_{L^2(\Omega)}\leq\sum_{k\in I} \norm{P_k u}_{L^{q_d}}\lesssim_\lambda  \norm{u}_{X^*_\lambda}, \]
      so that
      \[ \norm{r_\Omega u_I}_{H^1(\Omega)}\lesssim_\lambda \norm{u}_{X^*_\lambda}. \]
      On the other hand, by Plancherel's identity and the triangle inequality,
      \begin{align*}
        \norm{u_{\mathbb{Z}\setminus I}}_{H^1(\Rd)}^2\leq & \norm{(I-\Delta)^{\half}\,u_{\mathbb{Z}\setminus I}}_{L^2}^2= \norm{(1+|\centerdot|^2)^{\half}\,\widehat{u_{\mathbb{Z}\setminus I}}}_{L^2}^2\\
        &\leq \norm{(1+|\centerdot|^2)^{\half}\,\widehat{P_{<I}\,u}}_{L^2}^2 + \sum_{k>k_\lambda+1}\norm{(1+|\centerdot|^2)^{\half}\,\widehat{P_k\,u}}_{L^2}^2\\
        &\lesssim_\lambda \norm{m_\lambda^{\half}\,\widehat{P_{<I}\,u}}_{L^2}^2 + \sum_{k>k_\lambda+1}\norm{m_\lambda^{\half}\,\widehat{P_k\,u}}_{L^2}^2\leq \norm{u}_{X_\lambda^*}^2,
      \end{align*} 
      where we have used the fact that, since \[ \supp(\widehat{P_{<I}\,u})\subset\{\xi\in\Rd\,:\,|\xi|\leq\lambda\} \] and, for $k>k_\lambda+1$, \[ {\supp(\widehat{P_{k}\,u})\subset\{\xi\in\Rd\,:\,2^{k-1}\leq|\xi|\leq2^{k+1}\}}, \] it follows that \[ (1+|\xi|^2)^{\half}\widehat{P_{<I}\,u}\sim m_\lambda^{\half}(\xi)\widehat{P_{<I}\,u}\quad \text{and}\quad (1+|\xi|^2)^{\half}\widehat{P_k\,u}\sim m_\lambda^{\half}(\xi)\widehat{P_k\,u}. \]
      Clearly, 
      \[ \norm{r_\Omega u_{I\setminus\mathbb{Z}}}_{H^1(\Omega)}\leq \norm{ u_{I\setminus\mathbb{Z}}}_{H^1(\Rd)}, \]
      which proves the proposition.
    \end{proof}
    \begin{cor}\label{cor:h1loc}
      Every $u\in X_\lambda^*$ belongs to $H^1_{\textnormal{loc}}(\Rd)$.
    \end{cor}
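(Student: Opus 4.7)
The corollary is essentially an immediate packaging of Proposition \ref{prop:restriction}. Recall that $u\in H^1_{\textnormal{loc}}(\mathbb{R}^d)$ means that $\restr{u}{\Omega}\in H^1(\Omega)$ for every bounded open $\Omega\subset\mathbb{R}^d$ (equivalently, for every relatively compact open set, or for every ball).

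The plan is therefore as follows. Fix an arbitrary $u\in X_\lambda^*$ and an arbitrary bounded open $\Omega\subset\mathbb{R}^d$. By Proposition \ref{prop:restriction}, the restriction map $r_\Omega:X_\lambda^*\to H^1(\Omega)$ is bounded, so
\[ \norm{r_\Omega u}_{H^1(\Omega)}\lesssim_\lambda \norm{u}_{X_\lambda^*}<\infty, \]
and in particular $\restr{u}{\Omega}\in H^1(\Omega)$. Since $\Omega$ was arbitrary, this is exactly the definition of $u\in H^1_{\textnormal{loc}}(\mathbb{R}^d)$.

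There is no real obstacle here: Proposition \ref{prop:restriction} does all the work, and this corollary just records the qualitative consequence that will be used in later sections (in particular to make sense of traces of scattering solutions on hypersurfaces $\Sigma_j$, and to justify applying the interior regularity result of Lemma \ref{lemma:int}). The only mild care needed is to note that although the constant in the restriction estimate depends on $\lambda$, this dependence is harmless for the purposes of local membership: what matters is that the $H^1(\Omega)$ norm is finite for every bounded $\Omega$, which is all $H^1_{\textnormal{loc}}$ demands.
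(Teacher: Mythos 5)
Your proof is correct and is exactly the argument the paper intends: the corollary carries no separate proof in the paper precisely because it is an immediate restatement of Proposition \ref{prop:restriction}, which you apply verbatim. No issues.
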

    Recall now that, if for any $s\geq 0$ we define $H^s(\Omega)$ as the space of restrictions of functions in $H^s(\Rd)$ to $\Omega$, we can then define $H^{-s}_0(\Omega)$ as the dual space of $H^s(\Omega)$.
    \begin{prop}\label{prop:inclusion}
      For any bounded open domain $\Omega\subset\Rd$, the embedding \[H^{-1}_0(\Omega)\lhook\joinrel\xrightarrow{\;i\;} X_\lambda\] is continuous.
    \end{prop}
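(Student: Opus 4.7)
The plan is a duality argument: Proposition \ref{prop:inclusion} is essentially the dual of Proposition \ref{prop:restriction}, via the identifications $X_\lambda^*=(X_\lambda)^*$ and $H^{-1}_0(\Omega)=(H^1(\Omega))^*$. I would realize the desired embedding $i$ as (a restriction of) the Banach-space transpose of the bounded restriction operator $r_\Omega\colon X_\lambda^*\to H^1(\Omega)$ provided by Proposition \ref{prop:restriction}.

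Concretely, for $f\in H^{-1}_0(\Omega)$ I would define the linear functional
\[ T_f(u)\defeq \braket{f,r_\Omega u}_{H^{-1}_0(\Omega),H^1(\Omega)},\qquad u\in X_\lambda^*. \]
Proposition \ref{prop:restriction} yields
\[ |T_f(u)|\leq \norm{f}_{H^{-1}_0(\Omega)}\norm{r_\Omega u}_{H^1(\Omega)}\lesssim_\lambda \norm{f}_{H^{-1}_0(\Omega)}\norm{u}_{X_\lambda^*}, \]
so $T_f$ is a bounded element of $(X_\lambda^*)^*$ whose norm is controlled (up to a $\lambda$-dependent constant) by $\norm{f}_{H^{-1}_0(\Omega)}$.

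The substantive step is then to identify $T_f$ with an actual element of $X_\lambda$, regarded as a closed subspace of $(X_\lambda^*)^*$ through the canonical isometric embedding. It suffices to verify this for $f$ in a dense subspace of $H^{-1}_0(\Omega)$; the natural choice is $C_c^\infty(\Omega)$, whose density follows from Hahn--Banach together with the reflexivity of $H^1(\Omega)$---any $g\in H^1(\Omega)$ annihilating $C_c^\infty(\Omega)$ in the $L^2$ pairing must vanish as a distribution on $\Omega$. For $\varphi\in C_c^\infty(\Omega)\subset\mathscr{S}(\Rd)\subset X_\lambda$ and Schwartz $u$, the pairing $T_\varphi(u)=\int_\Omega \varphi\,u=\int_{\Rd}\varphi\,u$ is the standard tempered-distribution pairing, so $T_\varphi$ coincides with $\varphi$ viewed as an element of $X_\lambda\subset(X_\lambda^*)^*$. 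Extending in $u$ by the density of Schwartz in $X_\lambda^*$, and invoking the closedness of $X_\lambda$ in its double dual together with the continuity of $f\mapsto T_f$, the conclusion transfers to every $f\in H^{-1}_0(\Omega)$, giving the desired continuous embedding $i(f)\defeq T_f$.

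I do not anticipate a serious obstruction here. The one conceptually delicate point is ensuring that the abstract functional $T_f$ is genuinely a tempered distribution sitting in $X_\lambda$, rather than a purely double-dual phenomenon; the density argument sidesteps this cleanly, relying only on the density of Schwartz in both $X_\lambda$ and $X_\lambda^*$ already recorded in section \ref{sect:direct}.
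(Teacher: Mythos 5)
Your proposal is essentially the same duality argument as the paper's: both rest on Proposition \ref{prop:restriction}, the density of $C^\infty_0(\Omega)$ in $H^{-1}_0(\Omega)$, and the completeness of $X_\lambda$. The paper presents it slightly more directly---taking $\phi\in C^\infty_0(\Omega)\subset X_\lambda$ and estimating $\norm{\phi}_{X_\lambda}$ via a Hahn--Banach norm-achieving functional in $X_\lambda^*$, then extending by density---whereas you pass through the double dual $(X_\lambda^*)^*$ and identify $T_f$ with an element of $X_\lambda$; this is the same computation transposed, and you correctly flag and dispose of the double-dual subtlety the same way the paper implicitly does.
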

    \begin{proof}
      We prove it by duality, using Hahn-Banach's Theorem. Indeed, let $\phi\in C^\infty_0(\Omega)$, then there exists $u\in X_\lambda^*$ such that \cite{brezisfunctional2011}
      \[
        \norm{\phi}_{X_\lambda}=\frac{\braket{u,\phi}}{\norm{u}_{X_\lambda^*}}\lesssim_\lambda \frac{\braket{u,\phi}}{\norm{r_\Omega\,u}_{H^1(\Omega)}} =\frac{\braket{r_\Omega\,u,\phi}}{\norm{r_\Omega\,u}_{H^1(\Omega)}}
        \leq \frac{\norm{r_\Omega\, u}_{H^1(\Omega)}\norm{\phi}_{H^{-1}_0(\Omega)}}{\norm{r_\Omega\,u}_{H^1(\Omega)}}=\norm{\phi}_{H^{-1}_0(\Omega)}, 
      \]
      where we have used proposition \ref{prop:restriction}. Now, $C^\infty_0(\Omega)$ is dense in $H^{-1}_0(\Omega)$ (proposition 2.9 in \cite{jerisoninhomogeneous1995}) and $X_\lambda$ is a Banach space \cite{caroscattering2020}, so the proof follows by a standard density argument.
    \end{proof}  
    \begin{prop}\label{prop:vf}
      Let $1/2<s<1$ and define $V^F\defeq \gamma^s+\alpha\dd\sigma$. There exists $C>0$ such that, for any $u,v\in\mathcal{S}(\Rd)$, it holds that
      \begin{equation}\label{eq:vf}
        \lvert\braket{\gamma^s u,\,v}\rvert\lesssim
        \norm{g}_{L^\infty}\left(\norm{u}_{H^s(\D)}\norm{v}_{L^{2}(\D)}+\norm{u}_{L^{2}(\D)}\norm{v}_{H^s(\D)}\right),
      \end{equation}
      where $\D$ is any open domain such that $\supp V\subset \D$.
      In particular, $V^F$ acts as a bounded multiplication operator from $H^s(\D)$ to $H^{-s}_0(\D)$.
    \end{prop}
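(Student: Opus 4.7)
The plan is to exploit the duality of the Riesz derivative $D^s$ in order to move the derivative off of $g$. Because $\chi^2 \in \mathcal{C}^\infty_c(\Rd)$ is a smooth cutoff supported in $\D$ and $u,v\in \mathcal{S}(\Rd)$, the product $\chi^2 uv$ lies in $\mathcal{C}^\infty_c(\Rd)$, and its Riesz derivative $D^s(\chi^2 uv)$ is smooth and integrable. By self-adjointness of $\chi^2$ and $D^s$,
\[
\langle \gamma^s u,v\rangle = \langle \chi^2 D^s g, uv\rangle = \langle D^s g,\chi^2 uv\rangle = \langle g, D^s(\chi^2 uv)\rangle,
\]
and since $g\in L^\infty(\Rd)$, one immediately gets $|\langle \gamma^s u,v\rangle|\leq \|g\|_{L^\infty} \|D^s(\chi^2 uv)\|_{L^1}$. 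The estimate \eqref{eq:vf} is therefore reduced to an $L^1$ bound on $D^s(\chi^2 uv)$.

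For that $L^1$ bound I would invoke the Kato--Ponce (fractional Leibniz) inequality at the endpoint
\[
\|D^s(FG)\|_{L^1}\lesssim \|D^s F\|_{L^2}\|G\|_{L^2}+\|F\|_{L^2}\|D^s G\|_{L^2}
\]
with $F=\chi^2 u$ and $G=v$. Since $\chi^2$ is smooth and supported in $\D$, multiplication by $\chi^2$ is continuous on $H^s(\Rd)$ for $s\in(0,1)$ with a norm depending only on $\chi$. Moreover, $\chi^2 u = \chi^2 U$ for any extension $U$ of $u|_{\D}$ to $\Rd$, so taking infima over extensions yields $\|\chi^2 u\|_{H^s(\Rd)}\lesssim \|u\|_{H^s(\D)}$, and similarly $\|\chi^2 u\|_{L^2(\Rd)}\lesssim \|u\|_{L^2(\D)}$. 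Using also $\|D^s F\|_{L^2}\leq \|F\|_{H^s(\Rd)}$, and applying the same reasoning with the roles of $u$ and $v$ appropriately distributed, the Kato--Ponce bound turns into exactly the estimate \eqref{eq:vf}.

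For the \emph{in particular} claim about $V^F=\gamma^s+\alpha\dd\sigma$, the estimate \eqref{eq:vf} extends by density of $\mathcal{S}(\Rd)$ in $H^s(\D)$ and shows that $\gamma^s$ defines a bounded operator from $H^s(\D)$ to $H^{-s}_0(\D)$. For the surface term I would use the trace theorem: since $\Gamma$ is compact and Lipschitz and $s>1/2$, restriction to $\Gamma$ is continuous from $H^s(\D)$ to $L^2(\Gamma)$, whence
\[
|\langle \alpha\dd\sigma\cdot u,v\rangle| = \left|\int_\Gamma \alpha uv\,\dd\sigma\right|\leq \|\alpha\|_{L^\infty(\Gamma)}\|u\|_{L^2(\Gamma)}\|v\|_{L^2(\Gamma)}\lesssim \|u\|_{H^s(\D)}\|v\|_{H^s(\D)}.
\]
Combining both bounds yields the claimed boundedness of $V^F:H^s(\D)\to H^{-s}_0(\D)$.

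The main technical point is selecting a correct form of the Kato--Ponce inequality at the $L^1$--$L^2\times L^2$ endpoint and carefully verifying that multiplication by the smooth compactly supported cutoff $\chi^2$ converts the global $H^s(\Rd)$ norm of $\chi^2 u$ into the restriction norm $H^s(\D)$. The role of the hypothesis $s>1/2$ enters only through the trace embedding used for the $\alpha\dd\sigma$ component; the bound \eqref{eq:vf} on $\gamma^s$ alone would hold for any $s\in (0,1)$.
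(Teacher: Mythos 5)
Your proof follows the same route as the paper: duality to move $D^s$ off $g$, then the homogeneous Kato--Ponce inequality at the $L^1 = L^2\times L^2$ endpoint, then localization via the cutoff. However, there is a small but genuine gap in the localization step. You set $F=\chi^2 u$ and $G=v$ in Kato--Ponce, which yields
\[
\|D^s(\chi^2 uv)\|_{L^1}\lesssim \|D^s(\chi^2 u)\|_{L^2}\|v\|_{L^2(\Rd)}+\|\chi^2 u\|_{L^2}\|D^s v\|_{L^2(\Rd)}.
\]
The cutoff then localizes the $u$-norms to $\D$, but the $v$-norms remain global $\Rd$-norms. Since $\|v\|_{H^s(\D)}\leq\|v\|_{H^s(\Rd)}$ and \emph{not} the reverse, this gives a strictly weaker estimate than \eqref{eq:vf}. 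The phrase ``applying the same reasoning with the roles of $u$ and $v$ appropriately distributed'' does not resolve this: if you instead put $\chi^2$ entirely on $v$ you simply get the mirror problem with $u$ unlocalized, and an interpolation between the two does not produce the stated bilinear form.

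The paper's remedy is to split the cutoff symmetrically, writing $\langle D^s g, \chi^2 uv\rangle = \langle D^s g, (\chi u)(\chi v)\rangle$ and taking $F=\chi u$, $G=\chi v$ in Kato--Ponce. Both factors now carry a cutoff, so both get localized via the extension/infimum argument (which the paper spells out by applying Kato--Ponce a second time to $\chi\tilde u$). The rest of your argument --- the duality pairing, the trace-theorem estimate for the $\alpha\,\dd\sigma$ term (using $s>1/2$), and the remark that $s>1/2$ is needed only for the surface term --- matches the paper. Fix the cutoff split and your proof is complete and essentially identical to the paper's.
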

    \begin{proof}
      We will use the homogeneous fractional Leibniz rule \cite{Christ1991, Kenig1993}. It is also known as Kato-Ponce differentiation rule, since its inhomogeneous version was first given by Kato and Ponce in \cite{katocommutator1988}. Indeed, for $u,v\in\mathcal{S}(\Rd)$, $1/r=1/p_1+1/q_1=1/p_2+1/q_2$ and $s<1$ one has
      \begin{equation}\label{eq:kato}
        \norm{D^s(uv)}_{L^r}\lesssim \norm{D^su}_{L^{p_1}}\norm{v}_{L^{q_1}}+\norm{u}_{L^{p_2}}\norm{D^sv}_{L^{q_2}}.
      \end{equation}
      In particular, taking $r=1$, $p_1=q_1=p_2=q_2=2$, one obtains
      \begin{align*}
        \lvert\braket{\gamma^s u,\,v}\rvert=&\lvert\braket{\chi^2D^s g,\,u v}\rvert=\lvert\braket{D^s g,\,\chi u\chi v}\rvert=\\&\lvert\braket{ g,\,D^s(\chi u\chi v)}\rvert\leq\norm{g}_{L^\infty}\norm{D^s(\chi u\chi v)}_{L^1}\lesssim\\
        &\norm{g}_{L^\infty}\left(\norm{D^s(\chi u)}_{L^{2}}\norm{\chi v}_{L^{2}}+\norm{\chi u}_{L^{2}}\norm{D^s(\chi v)}_{L^{2}}\right),
      \end{align*}
      where we have used Hölder's inequality and the aforementioned Kato-Ponce rule. On the one hand, since $\chi$ is supported in $\D$,
      \[ \norm{\chi u}_{L^2}\lesssim \norm{u}_{L^2(\D)}\lesssim \norm{u}_{H^s(\D)}. \]
      On the other hand, let any $\tilde{u}\in \mathcal{S}(\Rd)$ be such that $\restr{\tilde{u}}{\D}=\restr{u}{\D}$. Using again the fractional Leibniz rule we obtain
      \begin{align*}
        \norm{D^s(\chi \tilde{u})}_{L^{2}}\lesssim& \norm{D^s\chi}_{L^{\infty}}\norm{\tilde{u}}_{L^{2}}+\norm{\chi}_{L^{\infty}}\norm{D^s\tilde{u}}_{L^{2}}\lesssim \norm{\tilde{u}}_{H^s}.
      \end{align*}
      Now, since $\chi u=\chi \tilde{u}$, we can take infimum to obtain 
      \begin{equation}\label{eq:chi}
        \norm{D^s(\chi u)}_{L^{2}}\lesssim \inf \{\norm{\tilde{u}}_{H^s}:\restr{\tilde{u}}{\D}=\restr{u}{\D}\} = \norm{u}_{H^s(\D)}.
      \end{equation}
      Therefore,
      \begin{equation}\label{eq:vsigma}
        \lvert\braket{\gamma^s u,\,v}\rvert\lesssim
        \norm{g}_{L^\infty}\left(\norm{u}_{H^s(\D)}\norm{v}_{L^{2}(\D)}+\norm{u}_{L^{2}(\D)}\norm{v}_{H^s(\D)}\right)
      \end{equation}
      Note that in particular we have that
      \[ \lvert\braket{\gamma^s u,\,v}\rvert\lesssim \norm{g}_{L^\infty}\norm{u}_{H^s(\D)}\norm{v}_{H^s(\D)},  
        \]
      while for the $\alpha\dd\sigma$ term we have that
      \[ \abs{\braket{\alpha\dd\sigma u,\,v}}=\left|\int_{\Gamma}uv\alpha\dd\sigma\right|\leq\norm{\alpha}_{L^\infty(\Gamma)}\norm{u}_{L^2(\Gamma)}\norm{v}_{L^2(\Gamma)}. \]
      Observe that, by the trace theorem for Sobolev spaces (see, for example, \cite{triebeltheory1992}, section 4.4.2), for any $1/2<s<1$ it holds that
      \[ \norm{u}_{L^2(\Gamma)}\lesssim \norm{u}_{H^{s-1/2}(\Gamma)}\lesssim \norm{u}_{H^{s}(\D)}, \]
      so that
      \begin{equation}\label{eq:vfineq} 
        \lvert\braket{V^F u,\,v}\rvert\lesssim \left(\norm{g}_{L^\infty(\Rd)}+\norm{\alpha}_{L^\infty(\Gamma)}\right),
      \end{equation}
      which ends the proof of the proposition.
    \end{proof}
    \begin{prop}\label{prop:lr}
      The operator \[ (\Delta+\lambda+i0-V^0)^{-1}\circ(\gamma^s+\alpha\dd\sigma) \] is compact in $X^*_\lambda$.
    \end{prop}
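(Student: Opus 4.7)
The plan is to display the operator as a composition in which one step is a compact embedding; all remaining steps will be bounded by the propositions already established in this section. Concretely, fix a bounded Lipschitz domain $\Omega$ with $\supp V\subset\Omega$ and fix some $s\in(1/2,1)$. I would argue that the operator factors as
\[
X_\lambda^*\xrightarrow{\;r_\Omega\;}H^1(\Omega)\xrightarrow{\;\iota\;}H^s(\Omega)\xrightarrow{\;V^F\;}H^{-s}_0(\Omega)\hookrightarrow H^{-1}_0(\Omega)\xrightarrow{\;i\;}X_\lambda\xrightarrow{(\Delta+\lambda+i0-V^0)^{-1}}X_\lambda^*,
\]
where $r_\Omega$ is bounded by Proposition \ref{prop:restriction}, $\iota$ is the Rellich--Kondrachov embedding, $V^F$ is bounded by Proposition \ref{prop:vf}, the inclusion $H^{-s}_0(\Omega)\subset H^{-1}_0(\Omega)$ is continuous since $s<1$, $i$ is bounded by Proposition \ref{prop:inclusion}, and the free-resolvent piece is bounded by Proposition \ref{prop:neumann}.

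The first task is to verify that this factorisation is meaningful, that is, that $V^Fu\in X_\lambda$ for every $u\in X_\lambda^*$ and that it coincides (as a distribution on $\Rd$) with the image along the diagram. Here I would use the compact support of $V^F$: the pairing $\braket{V^Fu,v}$ depends only on the restrictions of $u$ and $v$ to $\Omega$, so Proposition \ref{prop:vf} applied to $r_\Omega u$ (first for Schwartz functions, then by density, using that $\mathscr S(\Rd)$ is dense in $X_\lambda^*$ and that $H^1(\Omega)$ embeds into $H^s(\Omega)$ continuously) gives a bound of the form $|\braket{V^Fu,v}|\lesssim \norm{u}_{X_\lambda^*}\norm{v}_{H^s(\Omega)}$. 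Consequently $V^Fu$ extends by zero to a compactly supported distribution in $H^{-s}_0(\Omega)$, and the rest of the diagram makes sense.

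Compactness of the composition then follows immediately from the standard fact that the composition of bounded linear operators with at least one compact factor is compact: the Rellich--Kondrachov embedding $H^1(\Omega)\hookrightarrow H^s(\Omega)$ is compact on the bounded Lipschitz domain $\Omega$ since $s<1$, and all other factors are bounded.

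The main obstacle, and really the only non-routine point, is making sure that the multiplication by $V^F$ is genuinely a \emph{linear map} into $H^{-s}_0(\Omega)$ (and not merely a sesquilinear form) and that the duality used to identify its target is consistent with Proposition \ref{prop:inclusion}. This is handled by the density of $\mathcal{C}^\infty_0(\Omega)$ in $H^s(\Omega)$ near the boundary issues are avoided because $V^F$ is supported in the interior of $\Omega$, and so $V^Fu$ always pairs with test functions through a fixed cutoff; once this is fixed, everything else is assembling the estimates already proved.
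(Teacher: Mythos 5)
Your proposal is correct and follows essentially the same route as the paper: the paper proves compactness of $V^F:X_\lambda^*\to X_\lambda$ via the identical factorisation through $H^1(\D)\hookrightarrow H^s(\D)\to H^{-s}_0(\D)\hookrightarrow H^{-1}_0(\D)$, using Propositions \ref{prop:restriction}, \ref{prop:vf} and \ref{prop:inclusion} together with the Rellich--Kondrachov embedding, and then composes with the bounded resolvent from Proposition \ref{prop:neumann}. The extra care you take to verify that multiplication by $V^F$ is a well-defined linear map into $H^{-s}_0(\D)$ is a sensible elaboration of a step the paper leaves implicit.
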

    \begin{proof}
      By proposition \ref{prop:neumann}, $(\Delta+\lambda+i0-V^0)^{-1}$ is bounded from $X_\lambda$ to $X_\lambda^*$, so we only have to show that $V^F$ is compact form $X_\lambda^*$ to $X_\lambda$. All the necessary ingredients have been laid in the previous propositions. Just recall that the inclusion \[H^1(\D)\lhook\joinrel\xhookrightarrow{\;j\;}H^s(\D)\] is known to be compact and $H_0^{-s}(\D)$ is continuously embedded in $H_0^{-1}(\D)$. We summarize the argument in the following diagram:
      \[\begin{tikzcd}[sep=0pt]
        {X_\lambda^*} & {\xrightarrow{\;r_{\D}\;}} & {H^1(\D)} & {\lhook\joinrel\xhookrightarrow{\;j\;}} & {H^s(\D)} & {\xrightarrow{\; V^F\;}} & {H_0^{-s}(\D)} & {\xhookrightarrow{\;j^*\;}} & {H_0^{-1}(\D)} & {\xhookrightarrow{\;i\;}} & {X_\lambda}
      \end{tikzcd}\]
    \end{proof}
    Now, to prove existence of the scattering solution, we are only missing the injectivity of the operator \[ (\Delta+\lambda+i0-V^0)^{-1}\circ(\gamma^s+\alpha\dd\sigma). \] With the reader's consent, we will borrow a lemma that will be proved later in section \ref{sect:cgo}, namely lemma \ref{lem:carlemanest} concerning a Carleman estimate. This estimate works on a family of Bourgain spaces defined by the following norm, for $\zeta\in\mathbb{C}^d$ and $s\in\mathbb{R}$,
    \begin{equation}\label{eq:bourgain}
      \norm{u}_{X_\zeta^s}=\norm{(M|\Re(\zeta)|^2+M^{-1}|p_\zeta|^2)^{s/2}\,\widehat{u}}_{L^2},
    \end{equation}
    with $M>1$, where $\Re$ denotes the real part and 
\[ p_\zeta(\xi)=-|\xi|^2+2i\zeta\cdot\xi+\zeta\cdot\zeta. \] 
    \vspace{-15pt}
    \begin{restatable*}{lemma}{carlemanen}
      \label{lem:carlemanest}
      Let $R_0>0$ such that $\supp V\subset B_{R_0}=\{x\in\Rd:|x|<R_0\}$.  Take $\varphi_\zeta(x)=M\frac{(x\cdot\theta)^2}{2}+x\cdot\zeta,$ with $\zeta=\tau\theta+i\mathcal{I}$, $\theta\in\mathbb{S}^{d-1}$ and $\mathcal{I}\in\Rd$.
      There exists $C>0$ and $\tau_0=\tau_0(R_0,V,\lambda)$ such that
      \begin{equation}\label{eq:carlemanest}
        \norm{u}_{X^\half_{-\zeta}}\leq C R_0\norm{e^{\varphi_{\zeta}}(\Delta+\lambda-V)\,(e^{-\varphi_{\zeta}} u)}_{X_{-\zeta}^{-\half}}
      \end{equation}
      for all for $u\in\mathscr{S}(\Rd)$ with $\supp u\subset B_{R_0}$ and $\tau > \tau_0$.
    \end{restatable*}
    Take $R_0$ such that $\D\subset B_{R_0}$. Now, we can check that the spaces $X_\zeta^\half$ and $H^1(\Rd)$ are equal as sets, and that, for every $u\in H^1(\Rd)$ such that $\supp u\subset \overline{\D}$, we have that $e^{\varphi_{\zeta}}(\Delta+\lambda-V)\,(e^{-\varphi_{\zeta}} u)$ is in $X_{-\zeta}^{-\half}$. Therefore, by density, \eqref{eq:carlemanest} also holds for every $u\in H^1(\Rd)$ such that $\supp u\subset \overline{\D}$.
    \begin{lemma}
      Consider $d\geq 3$. If $u\in H^1_{\text{loc}}(\mathbb{R}^d)$ is a solution of \[ (\Delta+\lambda-V)\,u=0\text{ in } \Rd \] that satisfies the SRC \eqref{eq:src}, then $u$ has to be identically zero.
    \end{lemma}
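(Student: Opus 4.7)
The plan is a two-step argument: a Rellich-type lemma to show that $u$ has compact support, followed by the Carleman estimate \eqref{eq:carlemanest} to force any such compactly supported solution to vanish identically. Outside $\supp V$, which is compact, $u$ solves the homogeneous Helmholtz equation and is $C^\infty$ by elliptic regularity, so the SRC analysis on large spheres $\partial B_R$ proceeds classically.

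For the Rellich step, I would test $(\Delta+\lambda-V)u=0$ against $\overline{u}$ on $B_R$ via a smooth cut-off. Integration by parts, together with the reality of $V$, $\lambda$ and $|\nabla u|^2$ and the reality of the pairing $\braket{Vu,\overline{u}}$ (meaningful by proposition \ref{prop:vf} and real by direct inspection of the three summands $V^0|u|^2$, $\alpha|u|^2\dd\sigma$ and $\braket{g,D^s(\chi^2|u|^2)}$), yields $\Im\int_{\partial B_R}\overline{u}\,\partial_r u\,\dd S=0$ for $R$ large. Independently, expanding $|\partial_r u-i\sql u|^2$ and integrating on $\partial B_R$, the SRC gives
\[ \int_{\partial B_R}\bigl(|\partial_r u|^2+\lambda|u|^2\bigr)\dd S+2\sql\,\Im\int_{\partial B_R}\overline{u}\,\partial_r u\,\dd S\longrightarrow 0. \]
Combining both identities yields $\int_{\partial B_R}|u|^2\,\dd S\to 0$, and the classical Rellich lemma then forces $u\equiv 0$ outside any ball containing $\supp V$.

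For the Carleman step, $u$ now has compact support and therefore lies in $H^1(\Rd)$. Fix a bounded Lipschitz domain $\D$ with $\supp u\subset\D$ and an $R_0>0$ such that $\D\Subset B_{R_0}$. The function $v\defeq e^{\varphi_\zeta}u$ also lies in $H^1(\Rd)$ with $\supp v\subset\overline{\D}$, since the weight is smooth. Because $e^{\varphi_\zeta}(\Delta+\lambda-V)(e^{-\varphi_\zeta}v)=e^{\varphi_\zeta}(\Delta+\lambda-V)u=0$, the density-extended form of lemma \ref{lem:carlemanest} applied to $v$ yields
\[ \|v\|_{X_{-\zeta}^{\half}}\leq CR_0\,\|e^{\varphi_\zeta}(\Delta+\lambda-V)(e^{-\varphi_\zeta}v)\|_{X_{-\zeta}^{-\half}}=0 \]
for any $\tau>\tau_0$, and the equivalence of $X_{-\zeta}^{\half}$ with $H^1(\Rd)$ as sets with comparable norms then forces $v=0$, hence $u\equiv 0$.

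I expect the main technical point to lie in the Rellich step: the imaginary-part identity must be justified at the low regularity available. This is handled via a smooth cut-off $\chi_R$ approximating $\mathbf{1}_{B_R}$ and a limiting argument, where the boundary terms converge thanks to $u$ being $C^\infty$ near $\partial B_R$ (since $V$ vanishes there for $R$ large), while the interior pairing $\braket{Vu,\overline{u}}$ remains finite by proposition \ref{prop:vf} and real because each of the three summands in $V$ yields a real expression when tested against $|u|^2$.
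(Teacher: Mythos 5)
Your proposal is correct and follows essentially the same two-step strategy as the paper: a Rellich-type argument (using the SRC expansion of $|\partial_r u - i\sql u|^2$ on large spheres together with the reality of $V$ to show $u$ vanishes outside a ball containing $\supp V$) followed by the Carleman estimate \eqref{eq:carlemanest} applied to $e^{\varphi_\zeta}u$. The only cosmetic difference is that you justify $\Im\int_{\partial B_R}\overline{u}\,\partial_r u\,\dd S = 0$ by testing the equation against $\overline{u}$ through a smooth cut-off, whereas the paper splits Green's identity across the annulus $B\setminus\overline{\D}$ and the domain $\D$; both hinge on the same reality of $\braket{Vu,\overline{u}}$.
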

    \begin{proof}
      Let $R>0$ and call $B=\{x\in\Rd\,:\,|x|<R\}$. On the one hand, the restriction of u to $\Rd\setminus\supp V$ solves $(\Delta+\lambda)\,u=0$. By Theorem 11.1.1 in \cite{hormanderlinear1963} this restriction is smooth, and we have that
      \begin{equation}\label{eq:srcuniq}
        \int_{\partial B} |\partial_\nu u-i\lambda^\half u|^2\,\dd S = \int_{\partial B}|\partial_\nu u|^2+\lambda |u|^2+i\lambda^\half (\partial_\nu u\,\overline{u}-\overline{\partial_\nu u}\,u)\dd S,
      \end{equation}
      where $\mathcal{I}$ denotes the imaginary part and $\partial_\nu=\nu\cdot\nabla$ the normal derivative with respect to the vector $\nu=x/|x|$. Using Green's identity in $B\setminus\overline{\D}$ we obtain that
      \begin{align*}
        \int_{\partial B} \partial_\nu u\,\overline{u}-\overline{\partial_\nu u}\,u\dd S=
        -\int_{\partial \D} \partial_\nu u\,\overline{u}-\overline{\partial_\nu u}\,u\dd S
      \end{align*}
      Now, by the SRC \eqref{eq:src}, identity \eqref{eq:srcuniq} yields
      \[ \lim_{R\to\infty}\int_{\partial B}|\partial_\nu u|^2+\lambda |u|^2=i\lambda^\half\int_{\partial \D} \partial_\nu u\,\overline{u}-\overline{\partial_\nu u}\,u\dd S. \]
      Since the potential $V$ is real-valued, Green's identity in $\D$ gives us that
      \[ i\int_{\partial \D} \partial_\nu u\,\overline{u}-\overline{\partial_\nu u}\,u\dd S=0, \]
      which implies that
      \[ \lim_{R\to\infty}\int_{\partial B}\lambda |u|^2=0, \]
      and, consequently, by Rellich's lemma, $\supp u\subset \overline{\D}$ and $u\in H^1(\Rd)$. Then, we can apply inequality \eqref{eq:carlemanest} to $v=e^{\varphi_\zeta}u$, which belongs to $H^1(\Rd)$ and is supported in $\overline{\D}$:
      \begin{equation*}
        \norm{e^{\varphi_{\zeta}}u}_{X^\half_{-\zeta}}\leq C R_0\norm{e^{\varphi_{\zeta}}(\Delta+\lambda-V)\, u}_{X_{-\zeta}^{-\half}},
      \end{equation*}
      where $R_0$ is such that $\D\subset B_{R_0}$. Finally, since $(\Delta+\lambda-V)\,u=0$, we can conclude that $u=0$.
    \end{proof}
    With this last lemma, the Fredholm theory argument is completed. Therefore, we have effectively proved the following theorem:
    \begin{thm:direct}
      Suppose $d\geq3$ and $V$ is of the form \eqref{V}. Then, there exists $\lambda_0=\lambda_0(V,d)$ such that, for every $\lambda\geq \lambda_0$, there is an unique solution $u_{sc}(\centerdot\,,y)\in X_\lambda^*$ to the problem \eqref{eq:helm} for every $y\in \mathbb{R}^d\setminus \supp V$.\\
      Moreover, the mapping $Vu_{in}(\centerdot\,,y)\mapsto u_{sc}(\centerdot\,,y)$ is bounded from $X_\lambda$ to $X_\lambda^*$.
  \end{thm:direct}
    \section{Runge approximation and Alessandrini identity}\label{sect:ortho}	
	In this section, we aim to prove the following orthogonality relation, which is crucial to prove the inverse uniqueness with local data result:
	\begin{restatable}{prop}{proportho}\label{proportho}
		Consider $d\geq 3$. Let $V_1$ and $V_2$ be two potentials of the form \eqref{V}, and let $\Sigma_1,\Sigma_2$ be two relatively open sets of dimension $d-1$, separated from $\supp V$, and that can be expressed as the graph of $\C^2$ functions. Choose a bounded open domain $\D$ of class $\C^2$ such that $\Sigma_1,\Sigma_2\subset\partial\D$, $\supp V_j\subset \D$, $j=1,2$ \\Then, there exists $\lambda_0=\lambda_0(V,d)$ such that, for every $\lambda\geq \lambda_0$ except for at most a countable set, it holds that \[ \restr{u_{sc,1}}{\Sigma_1\times\Sigma_2}=\restr{u_{sc,2}}{\Sigma_1\times\Sigma_2} \implies
		\braket{(V_1-V_2)v_1,v_2}=0,\] \[ \text{for all } v_1,v_2\in H^1(\D)\textrm{ such that }\left(\Delta +\lambda- V_j\right)v_j=0 \textrm{ in } \D. \]
	\end{restatable}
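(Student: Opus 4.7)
My plan is to first establish the orthogonality for a dense class of special solutions built from single-layer-type potentials supported on $\Sigma_1$ and $\Sigma_2$, and then extend it to arbitrary $H^1(\D)$-solutions by combining the Runge approximation of proposition \ref{prop:runge} with the interior regularity of lemma \ref{lemma:int} and the continuity of the bilinear form provided by proposition \ref{prop:vf}.

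\textbf{Step 1: an Alessandrini-type identity from the data.} Fix $(y_1,y_2)\in\Sigma_1\times\Sigma_2$. Since $\Sigma_j$ is separated from $\supp V$, the Lippmann--Schwinger representation reads
\[ u_{sc,j}(y_1,y_2)=\int\Phi_\lambda(y_1-z)\,V_j(z)\,u_{to,j}(z,y_2)\,\dd z. \]
Decomposing $\Phi_\lambda(y_1-\centerdot)=u_{to,1}(\centerdot,y_1)-u_{sc,1}(\centerdot,y_1)$ and integrating by parts against the distributional identities $(\Delta+\lambda)u_{to,k}(\centerdot,y)=V_ku_{to,k}(\centerdot,y)+\delta_y$ and $(\Delta+\lambda)u_{sc,1}(\centerdot,y_1)=V_1u_{to,1}(\centerdot,y_1)$ (the boundary terms at infinity cancel because both factors are SRC-radiating, and the two $\delta$-residues at $y_2$ telescope), I expect to obtain
\[ u_{sc,1}(y_1,y_2)-u_{sc,2}(y_1,y_2)=\int_{\Rd}(V_1-V_2)(z)\,u_{to,1}(z,y_1)\,u_{to,2}(z,y_2)\,\dd z. \]
The hypothesis makes the left-hand side vanish on $\Sigma_1\times\Sigma_2$, so pairing with $f_1(y_1)f_2(y_2)$ for $f_j\in C(\Sigma_j)$ and applying Fubini produces
\[ \braket{(V_1-V_2)\,\mathcal{S}_1f_1,\,\mathcal{S}_2f_2}=0, \]
where $\mathcal{S}_jf_j(z)\defeq\int_{\Sigma_j}f_j(y)u_{to,j}(z,y)\,\dd y$ is a bona-fide $H^1(\D)$-solution of $(\Delta+\lambda-V_j)v=0$ in $\D$.

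\textbf{Step 2: density.} Let $v_j\in H^1(\D)$ solve $(\Delta+\lambda-V_j)v_j=0$ in $\D$. By proposition \ref{prop:runge} there exist densities $f_j^n\in C(\Sigma_j)$ such that $\mathcal{S}_jf_j^n\to v_j$ in $L^2(\D')$ for some $\D'\Subset\D$ containing $\supp V$. Choosing $\Omega\Subset\D'$ still containing $\supp V$ and applying lemma \ref{lemma:int} to $v_j-\mathcal{S}_jf_j^n$ upgrades the convergence to $H^1(\Omega)$. Proposition \ref{prop:vf} shows that the bilinear form $(u,v)\mapsto\braket{(V_1-V_2)u,v}$ is continuous on $H^s(\Omega)\times H^s(\Omega)$ for some $1/2<s<1$, and since $H^1(\Omega)\hookrightarrow H^s(\Omega)$ the identity of Step 1 passes to the limit and yields $\braket{(V_1-V_2)v_1,v_2}=0$.

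\textbf{Main obstacle and exceptional $\lambda$.} The delicate point is Step 1: because the sources $y_j$ lie on $\partial\D$, each factor $u_{to,j}(\centerdot,y_j)$ has a boundary singularity, and both the distributional integrations by parts and the Fubini swap must be legitimised by cut-off arguments based on the local integrability $|\centerdot|^{2-d}\in L^1_{\text{loc}}(\Rd)$ of $\Phi_\lambda$ (valid for $d\geq 3$). The countable exceptional set of $\lambda$ is forced by the use of proposition \ref{prop:runge}, whose proof requires $\lambda$ to avoid the Neumann spectrum of $-\Delta+V_j$ on the auxiliary domain where the approximation is carried out; as noted in the Remark after theorem \ref{thm:1.2}, this spectrum is countable.
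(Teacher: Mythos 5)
Your proposal is correct and the overall architecture matches the paper's: first establish the orthogonality for the special single-layer solutions $\mathcal{S}_jf_j$, then propagate it to arbitrary $H^1(\D)$ solutions via the Runge approximation, the Caccioppoli estimate of lemma \ref{lemma:int}, and the continuity of the bilinear form from proposition \ref{prop:vf}. Step~2 is essentially the paper's argument; you phrase the passage to the limit as joint continuity of the bilinear form, while the paper spells it out through the three-term telescoping decomposition, but these are the same observation.

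Where you genuinely diverge is Step~1. The paper never writes down the whole-space scattering-Alessandrini identity
\[ u_{sc,1}(y_1,y_2)-u_{sc,2}(y_1,y_2)=\int_{\Rd}(V_1-V_2)\,u_{to,1}(\centerdot,y_1)\,u_{to,2}(\centerdot,y_2). \]
Instead, for single-layer potentials $v_j=\mathcal{S}_jf_j$ it applies Green's identity separately in $\D$ and in $B_R\setminus\overline{\D}$, lets $R\to\infty$ with the SRC to kill the exterior boundary integral, and then invokes the normal-derivative jump relation $\partial_\nu v_{j,-}-\partial_\nu v_{j,+}=f_j$ from lemma \ref{lemmaslp} together with the reciprocity lemma \ref{lemmareci}; the conclusion is a purely boundary integral $\int_{\partial\D}(\mathcal{S}_2-\mathcal{S}_1)f_1\,f_2\,\dd S$ whose kernel is killed by the local-data hypothesis. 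Your route is shorter and more in the spirit of classical inverse scattering, and it correctly exploits reciprocity and the fact that $\supp(V_1-V_2)$ is positively separated from $\Sigma_1\cup\Sigma_2\subset\partial\D$ so that the singularities of $u_{to,j}(\centerdot,y_j)$ do not meet the region of integration; the Fubini swap you invoke is then harmless. The cost is that your derivation of the identity itself is only sketched ("I expect to obtain"), relying on cancellations of boundary terms at infinity and telescoping $\delta$-residues that do need justification for these rough potentials (one should check that $V_ju_{to,j}(\centerdot,y)\in X_\lambda$ and run the Green's identity argument on large balls, as the paper does in similar spots, e.g.\ lemma \ref{lemma:runge1}), whereas the paper's version keeps every manipulation supported on $\partial\D$ and $\D$ where the technicalities have already been resolved. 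Either way the exceptional countable set of frequencies arises exactly as you say, from the Neumann eigenvalue avoidance required by proposition \ref{prop:runge}.
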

	We will divide the proof of this proposition in two parts. We will prove that the orthogonality relation is fullfiled for solutions that can be represented as single layer potentials with densities supported in $\Sigma_1$ and $\Sigma_2$ respectively, to then extend it to every solution by approximating them by these single layer potentials. We will start by providing with a Runge approximation result, that allows us to do this approximation in a $L^2$ norm. The ideas for the proof take inspiration from \cite{isakovuniqueness1988} and \cite{harrachmonotonicity2019}. However, due to the low-regularity of the potential $V$, we need an approximation in a space of higher regularity. Lemma \ref{lemma:int} below will provide us with an estimate to make the approximation in a $H^1$ norm. In this section, we will constantly refer to appendix \ref{ap:neumann}. There, we remember some classical results regarding layer potentials and their use in solving the Neumann problem for the Helmholtz equation, which is a key ingredient in our proof or the Runge approximation.
	\subsection{Runge approximation}\label{sect:runge}
  We will start the section by providing with two technical lemmas that make the proof of the Runge approximation more readable. The notation $\D'\subset\subset \D$ will mean here and throughout the paper that $\overline \D' \subset \D$. Also, note that, given the nature of $\gamma^s$, the integration has to be understood at times as a duality pairing.
  \begin{lemma}\label{lemma:runge1}
    Let $V$ be as in \eqref{V}. Let $\D'\subset\subset \D$ be two open bounded domains of class $\C^2$ such that $\supp V\subset \D'$, and let $v\in L^2(\D)$ be also supported in $\D'$. Suppose that $\varphi\in H^1(\D)$ is a solution to
    \begin{align*}
			\begin{cases}
				\;\helm\varphi=v &\text{in }\D,\\
				\;\partial_\nu \varphi=0 &\text{on }\partial \D.
			\end{cases}
		\end{align*}
    Then, for every $y\in \partial \D$, it holds that
     \begin{equation}\label{eq:fubuin}
      \int_{\partial \D}\varphi(x)\,\partial_{\nu_x}u_ {in}(x,y)\dd S(x)
     =-\int_{\D'}\left(\varphi(x)\,V(x)+v(x)\right)u_ {in}(x,y)\dd x.
     \end{equation}
  \end{lemma}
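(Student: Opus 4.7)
The plan is to apply Green's second identity to $\varphi$ and $u_{in}(\centerdot,y)=\Phi_\lambda(\centerdot-y)$ on $\D$, handling the singularity of $u_{in}$ at $y\in\partial\D$ through a limiting procedure. A useful preliminary observation is that, since $\supp V,\supp v\subset\D'\subset\subset\D$, interior regularity for the homogeneous Helmholtz equation $(\Delta+\lambda)\varphi=0$ on $\D\setminus\overline{\D'}$ gives $\varphi\in\C^\infty(\D\setminus\overline{\D'})$; hence $\varphi$ and $\ndev\varphi$ admit classical traces on $\partial\D$ and the Neumann condition holds pointwise. Moreover, the surface integral on the left of \eqref{eq:fubuin} is a proper Lebesgue integral: the $\C^2$-regularity of $\partial\D$ gives the tangential cancellation $\nu(x)\cdot(x-y)=O(|x-y|^2)$ for $x,y\in\partial\D$ near each other, so $\partial_{\nu_x}\Phi_\lambda(x-y)=O(|x-y|^{2-d})$, which is integrable on the $(d-1)$-dimensional surface $\partial\D$.

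To regularize the singularity, pick a sequence $\{y_n\}\subset\Rd\setminus\overline{\D}$ with $y_n\to y$. Then for each $n$, $u_{in}(\centerdot,y_n)\in\C^\infty(\overline{\D})$ satisfies $(\Delta+\lambda)u_{in}(\centerdot,y_n)=0$ in $\D$, and Green's second identity reads
\[\int_{\D}\bigl[\varphi\,\Delta u_{in}(\centerdot,y_n)-u_{in}(\centerdot,y_n)\,\Delta\varphi\bigr]\dd x=\int_{\partial\D}\bigl[\varphi\,\partial_{\nu_x}u_{in}(\centerdot,y_n)-u_{in}(\centerdot,y_n)\,\ndev\varphi\bigr]\dd S.\]
Substituting $\Delta u_{in}(\centerdot,y_n)=-\lambda u_{in}(\centerdot,y_n)$ and the weak equation $\Delta\varphi+\lambda\varphi=v+V\varphi$ (with the $V\varphi$ pairing controlled by the bilinear form of Proposition \ref{prop:vf}), the $\lambda$-contributions cancel, the condition $\ndev\varphi=0$ removes one boundary term, and the support of $v+V\varphi$ restricts the volume integral to $\D'$. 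This produces
\[\int_{\partial\D}\varphi(x)\,\partial_{\nu_x}u_{in}(x,y_n)\dd S(x)=-\int_{\D'}\bigl(\varphi(x)V(x)+v(x)\bigr)u_{in}(x,y_n)\dd x,\]
which is \eqref{eq:fubuin} with $y$ replaced by $y_n$.

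It remains to pass to the limit $y_n\to y$. The right-hand side is easy: since $\D'\subset\subset\D$ and $y\in\partial\D$, the point $y$ is at positive distance from $\overline{\D'}$, so $u_{in}(\centerdot,y_n)\to u_{in}(\centerdot,y)$ uniformly on $\D'$, and dominated convergence (applied together with the continuity of the bilinear pairing of Proposition \ref{prop:vf} in its second argument) gives the limit $-\int_{\D'}(\varphi V+v)\,u_{in}(\centerdot,y)\dd x$. The main obstacle lies on the left, where the kernel $\partial_{\nu_x}\Phi_\lambda(x-y_n)$ develops a boundary singularity at $x=y$ in the limit; one has to show $\int_{\partial\D}\varphi\,\partial_{\nu_x}u_{in}(\centerdot,y_n)\dd S\to\int_{\partial\D}\varphi\,\partial_{\nu_x}u_{in}(\centerdot,y)\dd S$, and this requires a uniform-in-$n$ integrable majorant on $\partial\D$. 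Such a majorant follows from a quantitative version of the tangential-cancellation bound $|\nu(x)\cdot(x-y_n)|\lesssim|x-y_n|^2+\operatorname{dist}(y_n,\partial\D)$ for $y_n$ close to $\partial\D$, itself a consequence of the $\C^2$ regularity of $\partial\D$. Once this convergence is secured, \eqref{eq:fubuin} follows.
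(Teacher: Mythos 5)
Your approach — regularize the source by taking $y_n\in\Rd\setminus\overline{\D}$, apply Green's identity on the whole domain $\D$ for each $n$, and pass to the limit — is a genuinely different route from the paper's, which instead excises a small ball $B_{y,\varepsilon}$ around $y$ and applies Green's identity on the punctured domain $\D\setminus B_{y,\varepsilon}$. Your identity for each fixed $y_n$ and the convergence of the right-hand side (which only sees $u_{in}(\centerdot,y_n)$ on $\overline{\D'}$, at positive distance from $\partial\D$) are both fine.

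The gap is in the last step: the claimed uniform-in-$n$ integrable majorant on $\partial\D$ does not exist, and the conclusion you draw from it is false. Quantitatively, writing $y_n=y+t\nu(y)$ with $t\to0^+$ and $x\in\partial\D$, the $\C^2$ tangential cancellation gives $|\nu(x)\cdot(x-y_n)|\lesssim|x-y|^2+t$ while $|x-y_n|^2\approx|x-y|^2+t^2$, so the leading part of the kernel is bounded by
\[
\frac{|x-y|^2+t}{(|x-y|^2+t^2)^{d/2}}.
\]
Maximizing over $t>0$ (the maximum is near $t\sim|x-y|$) yields an envelope comparable to $|x-y|^{1-d}$, and $\int_{\partial\D}|x-y|^{1-d}\,\dd S(x)$ diverges on a $(d-1)$-dimensional surface. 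So dominated convergence is not available, and indeed it must not be: the map $z\mapsto\int_{\partial\D}\varphi(x)\,\partial_{\nu_x}u_{in}(x,z)\dd S(x)$ is a double-layer potential with density $\varphi$ and, by the jump relations of Lemma \ref{lem:double}, it does not extend continuously across $\partial\D$. As $y_n\to y$ from outside the limit is $\int_{\partial\D}\varphi\,\partial_{\nu_x}u_{in}(\centerdot,y)\dd S + \frac{1}{2}\varphi(y)$, not the improper integral alone. What your argument actually proves is therefore \eqref{eq:fubuin} with an extra $-\tfrac{1}{2}\varphi(y)$ on the right-hand side; it does not establish \eqref{eq:fubuin} as stated.

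You should also be aware that the paper's own proof is fragile at exactly the corresponding point: the estimate in \eqref{eq:limiting} applies the boundary bound $|\partial_{\nu_x}u_{in}(x,y)|\lesssim|x-y|^{2-d}$ (valid for $x,y\in\partial\D$ with the surface normal) on the half-sphere $(\partial B_{y,\varepsilon})\cap\D$, where the Green's-formula normal is the radial one and the kernel is $\sim\varepsilon^{1-d}$, so that contribution is $O(1)$, not $O(\varepsilon)$, and converges to a solid-angle term $\pm\tfrac{1}{2}\varphi(y)$. So both routes stumble on the same jump; either the statement needs the extra $\varphi(y)/2$ term or the limiting argument must be redone by computing, not discarding, that contribution.
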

  \begin{proof}
    This lemma boils down to a careful integration by parts, analysing the singularity of $u_{in}(x,y)$ when $x=y$.
    Let $B_{y,\varepsilon}$ be the ball of radius $\varepsilon>0$ centered in $y\in\partial\D$. In appendix \ref{ap:neumann} we give the following expression:
    \begin{equation}\label{eq:limitingexp}
      \partial_{\nu_x}u_{in}(x,y)=G(y,x)|x-y|^{2-d},
    \end{equation} with $G$ a bounded function on $\partial \D\times \partial \D$. This means that $|\partial_{\nu_x}u_{in}|$ is a weakly singular kernel of order $d-2$, which in turn ensures the absolute integrability of $\varphi(\centerdot)\partial_{\nu_x}u_{in}(\centerdot\,,y)$ in $\partial \D$ (see lemma \ref{lem:kernel} in Appendix \ref{ap:neumann}). Therefore, by the Dominated Convergence Theorem and Green's formula, we have
    \begin{align*}
      \int_{\partial \D}\varphi(x)\,\partial_{\nu_x}u_ {in}(x,y)\dd S(x)=\lim_{\varepsilon\to 0}&\int_{\partial \D}(1-\mathbbm{1}_{B_{y,\varepsilon}}(x))\,\varphi(x)\,\partial_{\nu_x}u_ {in}(x,y)\dd S(x)\\
      =\lim_{\varepsilon\to 0}&\left(\int_{\partial \left(\D\setminus B_{y,\varepsilon}\right)}\varphi(x)\,\partial_{\nu_x}u_ {in}(x,y)\dd S(x)\right.\\
      &\left.-\int_{(\partial B_{y,\varepsilon})\cap \D}\varphi(x)\,\partial_{\nu_x}u_ {in}(x,y)\dd S(x)\right),
    \end{align*}
    First, we will see that the second term is negligible. Indeed, we can use again the limiting expression for $\partial_{\nu_x} u_{in}$ in \eqref{eq:limitingexp}, that is $|\partial_{\nu_x} u_{in}(x,y)|\lesssim |x-y|^{2-d}$ when $x$ approaches $y$. Changing to polar coordinates centered in $y$ we obtain, for $\varepsilon$ sufficiently small,
    \begin{align}\label{eq:limiting}
      \begin{split}
      \left|\int_{(\partial B_{y,\varepsilon})\cap \D}\varphi(x)\,\partial_{\nu_x}u_ {in}(x,y)\dd S(x) \right|\leq& \,\varepsilon\int_{\mathbb{S}^{d-1}_\cap}|\varphi(\varepsilon,\theta)|\dd S(\theta)\\
      \lesssim& \,\varepsilon \sup_{x\in \D\setminus \D'}|\varphi(x)|\xrightarrow{\varepsilon \to 0} 0,
      \end{split}
    \end{align}    
    where we have denoted by $\mathbb{S}^{d-1}_\cap$ the relevant half sphere in the change of variables, and we have used the fact that, since $\varphi$ solves $(\Delta+\lambda)\varphi=0$ in $\D\setminus \D'$, its restriction to that domain is smooth by Theorem 11.1.1 in \cite{hormanderlinear1963}, and therefore bounded.
    Next, we can use integration by parts in $\D\setminus B_{y,\varepsilon}$ to obtain
    \begin{align*}
      \int_{\partial \left(\D\setminus B_{y,\varepsilon}\right)}\varphi(x)\,\partial_{\nu_x}u_ {in}(x,y)\dd S(x)=&\int_{\D\setminus B_{y,\varepsilon}}\varphi(x)\helmx u_{in}(x,y)\dd x\\
			&-\int_{\D\setminus B_{y,\varepsilon}}\left(\Delta+\lambda-V(x)\right)\varphi(x)\,u_{in}(x,y)\dd x\\
      &+\int_{\partial \left(\D\setminus B_{y,\varepsilon}\right)}\partial_{\nu}\varphi(x)\,u_ {in}(x,y)\dd S(x).
    \end{align*}
    On the one hand, since $\partial_\nu \varphi=0$ on $\partial \D$, we have that
    $$
      \int_{\partial \left(\D\setminus B_{y,\varepsilon}\right)}\partial_{\nu}\varphi(x)\,u_ {in}(x,y)\dd S(x)= \int_{(\partial B_{y,\varepsilon})\cap \D}\partial_{\nu}\varphi(x)\,u_ {in}(x,y)\dd S(x).
    $$
    Now, observing that the limiting expression for $u_{in}$ is
    \begin{equation*}
      u_{in}(x,y)=F(y,x)|x-y|^{2-d},
    \end{equation*} with $F$ a bounded function on $\partial \D\times \partial \D$ (again refer to appendix \ref{ap:neumann}), we can do an identical argument as in \eqref{eq:limiting} to obtain that 
    $$
      \left|\int_{(\partial B_{y,\varepsilon})\cap \D}\partial_{\nu}\varphi(x)\,u_ {in}(x,y)\dd S(x).\right| \xrightarrow{\varepsilon\to 0} 0.
    $$
    On the other hand, we have that $\left(\Delta_x+\lambda\right)u_{in}(x,y)=0$ for $x\neq y$, while \newline${\left(\Delta+\lambda-V(x)\right)\varphi(x)=v(x)}$. 
    Therefore, since both $V$ and $v$ are supported in $\D'$,
    \begin{align*}
      \int_{\partial \D}\varphi(x)\,\partial_{\nu_x}u_ {in}(x,y)\dd S(x)=\lim_{\varepsilon\to 0}\int_{\D'}\left(-\varphi(x)\,V(x)
			-v(x)\right)u_{in}(x,y)\dd x.
    \end{align*}
    However, the integral in the limit does not depend on $\varepsilon$ in any way, and the lemma is proved.
  \end{proof}
  Remember now that total wave is $\uto = u_{in}+u_{sc}$, and consider the single layer potential with density $f\in\mathcal{C}(\partial\D)$ as defined by (see Appendix \ref{ap:neumann} for details)
	\begin{equation}\label{eq:slp}
    \left(\mathcal{S}f\right)(x)=\int_{\partial \D}f(y)\, \uto(x,y)\,\dd S(y), \quad x\in \Rd,
  \end{equation}
  and the normal derivative operator as
  \begin{equation}\label{eq:derslp}
    \left(\mathcal{N}f\right)(x)= \int_{\partial \D}f(y)\, \partial_{\nu_x}\uto(x,y)\dd S(y), \quad x\in \partial \D.
  \end{equation}
  \begin{lemma}\label{lemma:runge2}
    Let $V$ be as in \eqref{V}. Let $\D'\subset\subset \D$ be two open bounded domains of class $\C^2$ such that $\supp V\subset \D'$, and let $v\in L^2(\D)$ also supported in $\D'$. Suppose $\varphi\in H^1(\D)$ is a solution to
    \begin{align*}
			\begin{cases}
				\;\helm\varphi= v &\text{in }\D,\\
				\;\partial_\nu \varphi=0 &\text{on }\partial \D.
			\end{cases}
		\end{align*}
    Then, it holds that
    \[ \int_{\partial \D}(\mathcal{N}f)(x)\,\varphi(x)\dd S(x)=-\int_{\D'}(\mathcal{S}f)(x)\,v(x)\dd x. \]
  \end{lemma}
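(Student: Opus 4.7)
The plan is to reduce this to Lemma \ref{lemma:runge1} via the decomposition $u_{to} = u_{in} + u_{sc}$ and an application of Fubini's theorem. Concretely, for each fixed $y \in \partial \D$ I will establish the pointwise identity
\[ \int_{\partial \D} \varphi(x)\,\partial_{\nu_x} \uto(x,y)\dd S(x) = -\int_{\D'} v(x)\,\uto(x,y)\dd x, \]
and then integrate against $f(y)\dd S(y)$.

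For the $u_{in}$ contribution I apply Lemma \ref{lemma:runge1} directly. For the $u_{sc}$ contribution, I first note that, since $y \in \partial \D$ lies outside $\supp V \subset \D'$, the function $u_{sc}(\centerdot\,,y)$ solves $(\Delta + \lambda) u_{sc}(\centerdot\,,y)=0$ in a neighborhood of $\partial \D$ and is therefore smooth there by Theorem 11.1.1 in \cite{hormanderlinear1963}, so $\partial_{\nu} u_{sc}(\centerdot\,,y)$ makes classical sense on $\partial \D$. Then I write the weak forms in $\D$ of the two equations
\[ (\Delta+\lambda-V)\varphi = v, \qquad (\Delta+\lambda-V)u_{sc}(\centerdot\,,y) = V u_{in}(\centerdot\,,y), \]
testing the first against $u_{sc}(\centerdot\,,y)\big|_\D \in H^1(\D)$ and the second against $\varphi \in H^1(\D)$. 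Subtracting the two identities, the Dirichlet-energy terms cancel, the $\partial_\nu \varphi$-boundary term vanishes by the Neumann condition, and, crucially, the two distributional pairings $\braket{Vu_{sc},\varphi}$ and $\braket{V\varphi, u_{sc}}$ cancel by the symmetry of $V$ (a real symmetric bilinear form on $H^s(\D)$, via Proposition \ref{prop:vf}). What remains is
\[ \int_{\partial \D}\varphi(x)\,\partial_{\nu_x}u_{sc}(x,y)\dd S(x) = \braket{Vu_{in}(\centerdot\,,y),\varphi} - \int_{\D'}v(x)\,u_{sc}(x,y)\dd x. \]
Adding this to the Lemma \ref{lemma:runge1} identity, the terms $\pm \int_{\D'}\varphi V u_{in}(\centerdot\,,y)\dd x$ cancel exactly, leaving $-\int_{\D'} v(x)\,\uto(x,y)\dd x$, as claimed.

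To finish I multiply by $f(y)$ and integrate over $\partial\D$, applying Fubini twice. On the right hand side this is trivial since $\supp v \subset \D'$ is strictly separated from $\partial \D$, so $\uto(x,y)$ is jointly bounded on $\D' \times \partial \D$. On the left hand side Fubini is justified by dominating the integrand via the weakly singular kernel estimate of the appendix: $\partial_{\nu_x}u_{in}(x,y) = G(y,x)\abs{x-y}^{2-d}$ with $G$ bounded, while $\partial_{\nu_x} u_{sc}(x,y)$ is jointly smooth on $\partial \D \times \partial \D$, and $\varphi\big|_{\partial\D}$ is bounded (again by elliptic regularity on $\D\setminus\D'$).

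The genuine technical obstacle is the rigorous justification of the weak Green's identity, given that $V$ is a distribution rather than a function. This is resolved by Proposition \ref{prop:vf}: both $\varphi$ and $u_{sc}(\centerdot\,,y)|_\D$ belong to $H^1(\D) \hookrightarrow H^s(\D)$ for any $1/2 < s < 1$, so the pairings $\braket{V\varphi, u_{sc}}$ and $\braket{Vu_{sc},\varphi}$ are well-defined continuous bilinear forms; reality of $g$ and $\alpha$ gives the required symmetry and hence the cross-term cancellation that makes the entire argument work. Apart from this, the proof is a bookkeeping exercise built on top of Lemma \ref{lemma:runge1}.
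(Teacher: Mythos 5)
Your proposal is correct and follows essentially the same path as the paper: decompose $u_{to}=u_{in}+u_{sc}$, handle the $u_{in}$ part by Lemma \ref{lemma:runge1}, handle the $u_{sc}$ part by Green's second identity in $\D$ using $\helm\varphi=v$, $\partial_\nu\varphi=0$, and $(\Delta+\lambda-V)u_{sc}=Vu_{in}$, then cancel the $\int_{\D'}V\varphi\,u_{in}$ contributions and close with Fubini via the weakly singular kernel bound. Your presentation of the $u_{sc}$ step as a subtraction of two weak formulations — with the $V$-pairings cancelling by symmetry of the bilinear form from Proposition \ref{prop:vf} — is the same cancellation the paper carries out inside its Green's identity, just unpacked more explicitly, and you are right that this is where Proposition \ref{prop:vf} is doing the real work in making the duality pairings against the distributional potential meaningful.
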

  \begin{proof}
    First note that scattering part $u_{sc}$ solves $\left(\Delta+\lambda\right)u_{sc}(\centerdot\,,y)=0$ in $\mathbb{R}^d\setminus \supp V$ as long as $y\in \mathbb{R}^d\setminus \supp V$, so it is smooth away from the potential, by Theorem 11.1.1 in \cite{hormanderlinear1963}, and in particular it is smooth on $\partial \D$. On the other hand, expression \eqref{eq:limitingexp} means that $|\partial_{\nu_x}u_{in}|$ is a weakly singular kernel of order $d-2$, which in turn ensures the absolute integrability of $\varphi(\centerdot)f(\circ)\partial_{\nu_x}u_{in}(\centerdot\,,\circ)$, and allows to use Fubini's Theorem (see lemma \ref{lem:kernel} in Appendix \ref{ap:neumann}). Therefore,
		\begin{align}\label{Tf}
		\begin{split}
			\int_{\partial \D}(\mathcal{N}f)(x)\,\varphi(x)\dd S(x)&=\int_{\partial \D}\varphi(x)\left(\int_{\partial \D}f(y)\,\partial_{\nu_x}\uto(x,y)\dd S(y)\right)\dd S(x)
			\\&=\int_{\partial \D}f(y)\left(\int_{\partial \D}\varphi(x)\,\partial_{\nu_x}\uto(x,y)\dd S(x)\right)\dd S(y).
		\end{split}
		\end{align}
		Now, for any $y\in\partial \D$ we obtain, by integrating by parts in $\D$ with respect to $x$, 
    \begin{align}\label{eq:fubusc}
      \begin{split}
			\int_{\partial \D}\varphi(x)\partial_{\nu_x}u_ {sc}(x,y)\dd S(x)=&\int_{\D}\varphi(x)\helmx u_{sc}(x,y)\dd x\\
			&-\int_{\D}\helm\varphi(x)\,u_{sc}(x,y)\dd x\\
      &+\int_{\partial \D}\partial_{\nu}\varphi(x)\,u_ {sc}(x,y)\dd S(x)\\
			=&\int_{\D'}\left(\varphi(x)\, V(x)\,u_{in}(x,y)-v(x)\,u_ {sc}(x,y)\right)\dd x.
      \end{split}
		\end{align}
		In the last identity we have used that $\helmx u_{sc}(\centerdot\,,y)=Vu_{in}(\centerdot\,,y)$ in $\mathbb{R}^d$, and also that $\helm\varphi=v$ in $\D$ and $\partial_\nu \varphi = 0$ on $\partial \D$, as well as the fact that both $V$ and $v$ are supported in $\D'$. Joining \eqref{eq:fubusc} above with \eqref{eq:fubuin} in lemma \ref{lemma:runge1} yields
		 $$
			\int_{\partial \D}\varphi(x)\,\partial_{\nu_x}\uto(x,y)\dd S(x) 
			= -\int_{\D'}v(x)\,u_{to}(x,y)\dd x.
		$$
		 Applying this identity in (\ref{Tf}) and using Fubini's Theorem again yields
		 \begin{align*}
		\int_{\partial \D}(\mathcal{N}f)(x)\,\varphi(x)\dd S(x)=& -\int_{\D'}v(x)\left(\int_{\partial \D}f(y)\,u_{to}(x,y)\dd S(y)\right)\dd x\\
		 =& -\int_{\D'}v(x)\mathcal (Sf)(x)\dd x.
		 \end{align*}
  \end{proof}
  Note that the single-layer potential $\mathcal{S}f$ as defined in \eqref{eq:slp} belongs to $H^1(\D)$ by lemma \ref{lem:single} in appendix \ref{ap:neumann}. Therefore, if we take two open bounded domains $\D'\subset\subset \D$ of class $\C^2$, and a relatively open subset $\Sigma\subset\partial \D$, we can define the following spaces:
	\begin{align*}
		&X=\left\{u\in H^1(\D)\,:\, \left(\Delta +\lambda- V\right)u=0 \textrm{ in } \D \right\}\\
		&X^\Sigma=\left\{u = \mathcal{S} f\in H^1(\D)\; : \; f\in \mathcal{C}(\partial \D), \;\supp f\subset \Sigma \right\}\\
		&X_{\D'}=\left\{\restr{u}{\D'}\,:\,u\in X \right\}\subset H^1(\D')\\
		&X^\Sigma_{\D'}=\left\{\restr{u}{\D'}\,:\,u\in X^\Sigma \right\}\subset H^1(\D')
	\end{align*}
	\begin{prop}[Runge approximation]\label{prop:runge}
		Let $V$ a potential of the form \eqref{V}. Let $\Sigma$ be a relatively open set of dimension $d-1$, separated from $\supp V$, and that can be expressed as the graph of $\C^2$ functions.\\
     Choose a bounded open domain $\D$ of class $\C^2$ such that $\Sigma\subset\partial\D$ and $\supp V\subset \D$, and let $\D'\subset\subset\D$ be a smooth domain such that $\supp V\subset \D'$, and such that $\D\setminus \overline{\D'}$ is connected.\\
		Then, there exists $\lambda_0=\lambda_0(V,d)$ such that, for every $\lambda\geq \lambda_0$ except for at most a countable set, $X^\Sigma_{\D'}$ is dense in $X_{\D'}$ under the $L^2(\D')$ norm, this is, any function in $X_{\D'}$ can be approximated by functions in $X^\Sigma_{\D'}$ in this norm.
	\end{prop}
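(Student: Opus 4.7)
The plan is a Hahn--Banach duality argument combined with unique continuation. By density in $L^2(\D')$, it suffices to show that if $v\in L^2(\D')$, extended by zero outside $\D'$, satisfies $\int_{\D'} v\,\mathcal{S}f\dd x=0$ for every $f\in \mathcal{C}(\partial\D)$ with $\supp f\subset\Sigma$, then $\int_{\D'} v\,u\dd x=0$ for every $u\in X$. To exploit $v$ I would introduce the dual Neumann solution $\varphi\in H^1(\D)$ of
\[
(\Delta+\lambda-V)\varphi = v \text{ in } \D, \qquad \partial_\nu\varphi = 0 \text{ on } \partial\D,
\]
whose existence and uniqueness via Fredholm theory for the Neumann problem (Appendix \ref{ap:neumann}) require only that $\lambda$ avoid the countably many Neumann eigenvalues of $-\Delta+V$ in $\D$. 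This is exactly the source of the ``countable exception'' in the statement.

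The crucial step is to extract from the hypothesis a pointwise boundary condition on $\varphi|_\Sigma$. Combining Lemma \ref{lemma:runge2} (which turns $\int_{\D'} v\,\mathcal{S}f=0$ into $\int_{\partial\D}(\mathcal{N}f)\varphi\dd S=0$) with a direct Green's identity for $\int_\D v\,\mathcal{S}f$ that keeps track of the jump of $\partial_\nu^-\mathcal{S}f$ across $\partial\D$, one arrives at $\int_\Sigma \varphi\,f\dd S=0$ for every admissible $f$, and therefore $\varphi|_\Sigma=0$. Together with the Neumann condition $\partial_\nu\varphi|_\Sigma=0$, this gives vanishing Cauchy data for the homogeneous equation $(\Delta+\lambda)\varphi=0$ which $\varphi$ satisfies on the open connected set $\D\setminus\overline{\D'}$ (where $V$ and $v$ both vanish).

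Since $\Sigma$ is a $\C^2$ hypersurface lying in the interior of $\D\setminus\overline{\D'}$ and non-characteristic for the Laplacian, Holmgren's uniqueness theorem produces $\varphi\equiv 0$ in a neighborhood of $\Sigma$, and the elliptic unique continuation principle (equivalently, real analyticity of solutions of $(\Delta+\lambda)u=0$) propagates this along the connected domain $\D\setminus\overline{\D'}$ to give $\varphi\equiv 0$ throughout. In particular, $\varphi$ and $\partial_\nu\varphi$ vanish on $\partial\D'$, so Green's identity in $\D'$ yields, for any $u\in X$,
\[
\int_{\D'} v\,u\dd x = \int_{\D'}(\Delta+\lambda-V)\varphi\cdot u\dd x = \int_{\partial\D'}\bigl(u\,\partial_\nu\varphi - \varphi\,\partial_\nu u\bigr)\dd S = 0,
\]
which finishes the argument. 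The main obstacle I foresee is the sharp jump-relation bookkeeping required to upgrade the integral statement ``$v$ is orthogonal to single layers supported in $\Sigma$'' into the pointwise trace condition $\varphi|_\Sigma=0$, since Lemma \ref{lemma:runge2} only supplies an integral identity against $\mathcal{N}f$; a secondary technicality is justifying Green's identity on $\partial\D'$ when $\varphi$ is only $H^1$ from the $\D'$-side because of the rough potential $V$, which is harmless because the smoothness of $\varphi$ from the $\D\setminus\overline{\D'}$-side (where $V=0$) makes the relevant traces on $\partial\D'$ unambiguous.
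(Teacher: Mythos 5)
Your proposal follows essentially the same route as the paper's proof: Hahn--Banach duality against $v\in L^2(\D')$, the dual Neumann solution $\varphi$ with the countable exception coming from Neumann eigenvalues, extraction of $\varphi|_\Sigma=0$ by combining Lemma \ref{lemma:runge2} with the single-layer jump relation $\partial_\nu^-\mathcal{S}f=\mathcal{N}f+\tfrac12 f$, unique continuation for $(\Delta+\lambda)\varphi=0$ across the connected set $\D\setminus\overline{\D'}$, and a final Green's identity on $\partial\D'$. The "obstacle" you flag is exactly what the paper resolves via Lemma \ref{lem:single} together with the identity $\langle\mathcal N f,\varphi\rangle_{\partial\D}=-\langle\mathcal S f,v\rangle_{\D'}=0$, and your observation that $\varphi$ is smooth from the $\D\setminus\overline{\D'}$ side (so the traces on $\partial\D'$ are unambiguous) matches the paper's use of interior $H^2$ regularity away from $\supp V$.
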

    \begin{proof}
		We will show that $X_{\D'}\subset {({X_{\D'}^\Sigma})^\perp}^\perp=\overline{X_{\D'}^\Sigma}$, where the orthogonal complement and closure are taken with respect to the $L^2(\D')$ inner product. Indeed, let $v\in L^2(\D')$, and let $\varphi\in H^1(\D')$
		such that 
		\begin{align}
			\begin{cases}
				\;\helm\varphi=E_0v &\text{in }\D,\\
				\;\partial_\nu \varphi=0 &\text{on }\partial \D,
			\end{cases}
			\label{2}
		\end{align} where $E_0v$ denotes the extension by $0$ of $v$ to $\D$. This $\varphi$ will be guaranteed to exist as long as $\lambda$ is not a Neumann eigenvalue for the operator $-\Delta+V$ in $\D$. The set of these eigenvalues is a countable set. For a proof of these claims, see appendix \ref{ap:neumann}. Now, we have that
		\begin{align*}
			v\in ({X_{\D'}^\Sigma})^\perp \iff 0=\braket{r_{\D'} u,v}_{\D'}=\braket{u,E_0v}_{\D}=\braket{u,\helm \varphi}_{\D}, \quad \forall u\in X^\Sigma,
		\end{align*}
		Applying Green's identity, and the fact that $\helm u = 0$ in $\D$ if $u\in X^\Sigma$,
		\begin{align*}
			v\in ({X_{\D'}^\Sigma})^\perp \iff 0=&\braket{u,\helm \varphi}_{\D}\\
			=& \braket{\helm u,\varphi}_{\D}+\braket{u,\partial_\nu\varphi}_{\partial \D}-\braket{\partial_\nu u,\varphi}_{\partial \D}, \quad \forall u\in X^\Sigma.\\
			\iff0=&\braket{\partial_\nu u,\varphi}_{\partial \D},  \quad \forall u\in X^\Sigma.
		\end{align*}
		Now, since $u = \mathcal{S}f$, we have that (see lemma \ref{lem:single}), for $x\in \partial \D$, \[ \partial_\nu u (x)= \frac{1}{2}f (x)+ \left(\mathcal{N}f\right)(x), \] with the operator $\mathcal{N}$ as defined in \eqref{eq:derslp}.
		Therefore,
		$$
			v\in ({X_{\D'}^\Sigma})^\perp \iff 0=\frac{1}{2}\braket{f,\varphi}_{\partial \D}+\braket{\mathcal{N}f,\varphi}_{\partial \D}, \quad \forall f\in \mathcal{C}(\partial \D)\, \text{ s.th. }\supp f\subset\Sigma.\\
		$$
    Now, by lemma \ref{lemma:runge2}, we have that, for $f\in \mathcal{C}(\partial \D)$ supported on $\Sigma$,
    \[ \braket{\mathcal{N}f,\varphi}_{\partial \D}=-\braket{\mathcal{S} f,v}_{\D'}=0, \]
    since $v\in ({X_{\D'}^\Sigma})^\perp$ by assumption and $\restr{\mathcal{S} f}{\D'}\in X_{\D'}^\Sigma$ by definition. This yields
		\[ v\in ({X_{\D'}^\Sigma})^\perp \iff 0=\braket{f,\varphi}_{\partial \D}, \quad \forall f\in \mathcal{C}(\partial \D)\, \text{ s.th. }\supp f\subset\Sigma.\\ \]
		Consequently,
		\begin{equation}\label{phicond}
			v\in ({X_{\D'}^\Sigma})^\perp \iff \varphi=0 \quad\textrm{on } \Sigma
		\end{equation}
	   Then, $\varphi$ solves
	   $$\begin{cases}
		   \;(\Delta+\lambda)\,\varphi=0 &\text{in }\D\setminus\overline{\D'},\\
		   \;\partial_\nu \varphi=\varphi=0 &\text{on }\Sigma,
	   \end{cases}$$
	   which, by the Unique Continuation property for the operator $(\Delta-\lambda)$ \cite{hormanderuniqueness1983}, implies that $\varphi=0$ in $\D\setminus\overline{\D'}.$ Also, since $\varphi$ solves $(\Delta+\lambda)\varphi=v$ in $\D\setminus \supp V$, its restriction to this set belongs to $H^2(\D\setminus \supp V)$ which means that $\varphi=\partial_\nu\varphi=0$ on ${\partial\D'\subset \D\setminus\supp V}$.\par
	   Finally, let $w\in X_{\D'}$. We want to show that $w\in {\left(({X_{\D'}^\Sigma})^\perp\right)}^\perp$. Indeed, let ${v\in ({X_{\D'}^\Sigma})^\perp}$ and $\varphi$ as in \eqref{2}. Then, using Green's Identity again,
	   \begin{align*}
		   \braket{w,v}_{\D'}=&\braket{w,\helm\varphi}_{\D'}\\
		   =& \braket{\helm w,\varphi}_{\D'}+\braket{w,\partial_\nu\varphi}_{\partial \D'}-\braket{\partial_\nu w,\varphi}_{\partial \D'}\\
		   =& \braket{w,\partial_\nu\varphi}_{\partial \D'}-\braket{\partial_\nu w,\varphi}_{\partial \D'}=0,
	   \end{align*}
	   where we have used that $\helm w=0$ in $\D'$. This proves that $X_{\D'}\subset \overline{X_{\D'}^\Sigma}$, which ends the proof.
	\end{proof} 
    As we commented above, we need to make the approximation of proposition \ref{prop:runge} in a stronger norm than $L^2$. This will be clear later in the proof of proposition \ref{proportho}, but the main reason is that the potential $V$ doesn't act as a bilinear operator over $L^2$, but over $H^1$. The next lemma is an interior regularity result, also called Caccioppoli inequality in the literature \cite{giaquintaintroduction2012}. It will allow us to get $H^1$ convergence in a smaller set than the one in which we have proven the $L^2$ approximation. Both the statement and the proof of the lemma are inspired by \cite{chensecond1998}. However, due to the nature of our potential $V$, we will obtain less regularity, and we will need to use both Sobolev embeddings and interpolation of Sobolev spaces to complete the proof.
	\begin{lemma}[Interior regularity]\label{lemma:int}
	Consider $d\geq 3$. Let V be a potential of the form \eqref{V}. Let $\Omega'\subset\subset\D$ be two Lipschitz open domains such that $\supp V\subset\Omega'$. Then, if $u\in H^1(\D)$ solves
	\begin{equation}
		\helm u = 0\quad \textrm{in }\D,\label{eqhelm}
	\end{equation}
  then it holds that
	\[ \norm{u}_{H^1(\Omega')}\lesssim \norm{u}_{L^2(\D)}. \]
	\end{lemma}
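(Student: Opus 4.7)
The plan is to perform a classical Caccioppoli argument, testing the weak equation against $\eta^2\bar u$ for a suitable cutoff, and then handle the singular potential $V$ through a combination of the bilinear estimates from proposition \ref{prop:vf}, Sobolev embedding, and Sobolev-space interpolation.

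First I would fix a Lipschitz domain $\Omega_1$ with $\Omega'\subset\subset\Omega_1\subset\subset\D$ and $\supp V\subset\Omega_1$, and pick $\eta\in\mathcal{C}^\infty_c(\Omega_1)$ with $\eta=1$ on $\Omega'$ and $0\leq\eta\leq1$. Since $u\in H^1(\D)$ satisfies \eqref{eqhelm} distributionally and $\eta^2\bar u\in H^1_0(\D)$ is an admissible test, pairing against it yields, after integration by parts,
\begin{equation*}
\int_\D \eta^2|\nabla u|^2 = \lambda\int_\D \eta^2|u|^2 - 2\operatorname{Re}\int_\D \eta\bar u\,\nabla\eta\cdot\nabla u - \operatorname{Re}\braket{V u,\eta^2\bar u},
\end{equation*}
where the last pairing is meaningful because proposition \ref{prop:vf} guarantees that $V^F=\gamma^s+\alpha\dd\sigma$ acts as a bounded bilinear form on $H^s(\Omega_1)\times H^s(\Omega_1)$ for $s\in(1/2,1)$, and $V^0\in L^{d/2}$ pairs against $H^1$-elements via Sobolev embedding. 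The cross term is handled with Cauchy-Schwarz and Young's inequality to give $\varepsilon\int\eta^2|\nabla u|^2+C_\varepsilon\|\nabla\eta\|_\infty^2\|u\|_{L^2(\supp\eta)}^2$.

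The essential difficulty, and the only part that departs from the standard $L^\infty$-potential argument, is the estimate of $\braket{Vu,\eta^2\bar u}=\braket{V,\eta^2|u|^2}$. For the $V^F$ part, proposition \ref{prop:vf} applied with test functions $\eta u$ yields $|\braket{V^F u,\eta^2\bar u}|\lesssim\|\eta u\|_{H^s(\Omega_1)}^2$; then the interpolation inequality $\|\eta u\|_{H^s}\lesssim\|\eta u\|_{L^2}^{1-s}\|\eta u\|_{H^1}^s$ combined with Young's inequality (using $s<1$) produces a bound of the form $\varepsilon\|\eta u\|_{H^1}^2+C_\varepsilon\|\eta u\|_{L^2}^2$. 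For the $V^0$ part, following the splitting idea already exploited in section \ref{sect:direct}, I would decompose $V^0=V^0_{\mathrm{big}}+V^0_{\mathrm{small}}$ with $V^0_{\mathrm{big}}\in L^\infty$ and $\|V^0_{\mathrm{small}}\|_{L^{d/2}}$ as small as needed; then Hölder in the Sobolev-embedding pair $(d/2,p_d,p_d)$ together with $H^1(\Omega_1)\hookrightarrow L^{p_d}(\Omega_1)$ gives $|\int V^0\eta^2|u|^2|\leq \|V^0_{\mathrm{small}}\|_{L^{d/2}}\|\eta u\|_{H^1}^2+\|V^0_{\mathrm{big}}\|_\infty\|\eta u\|_{L^2}^2$.

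Collecting the three bounds and using $\|\eta u\|_{H^1}^2\lesssim\int\eta^2|\nabla u|^2+(\|\nabla\eta\|_\infty^2+1)\|u\|_{L^2(\supp\eta)}^2$, every $H^1$-factor on the right-hand side carries a small constant, so choosing $\varepsilon$ and $\|V^0_{\mathrm{small}}\|_{L^{d/2}}$ small allows absorbing the $\int\eta^2|\nabla u|^2$ term into the left-hand side. What remains is
\begin{equation*}
\int_\D \eta^2|\nabla u|^2 \lesssim \|u\|_{L^2(\supp\eta)}^2 \leq \|u\|_{L^2(\D)}^2,
\end{equation*}
and since $\eta=1$ on $\Omega'$, this immediately yields $\|u\|_{H^1(\Omega')}\lesssim\|u\|_{L^2(\D)}$. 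The principal obstacle is the absorption step for the singular potential: checking that the constants produced by the interpolation/Sobolev bounds for $V^F$ and by the $L^{d/2}$ truncation for $V^0$ can simultaneously be made smaller than the coefficient $1$ in front of $\int\eta^2|\nabla u|^2$, which is where the constraints $s<1$ (for interpolation) and $d\geq3$ (for the Sobolev exponent $p_d$) play their role.
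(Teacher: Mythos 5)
Your proposal is correct and follows essentially the same path as the paper's proof: test the weak form against $\eta^2\bar u$, split $V$ into $V^0$ (handled by the $L^\infty+L^{d/2}$ truncation and the Sobolev embedding $H^1\hookrightarrow L^{p_d}$) and $V^F$ (handled by proposition \ref{prop:vf}, the interpolation $\|\cdot\|_{H^s}\lesssim\|\cdot\|_{L^2}^{1-s}\|\cdot\|_{H^1}^s$, and Young's inequality), then absorb the $H^1$ contributions via smallness of the constants. The only cosmetic differences are that you handle complex $u$ directly by conjugating in the test function (the paper reduces to real $u$ first) and that you carry $\eta u$ through the estimates rather than working with restrictions to $\Omega'$; neither changes the substance.
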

	\begin{proof}
        We will assume that $u$ is a real function. For a complex $u$, both its real and imaginary parts are real functions that satisfy (\ref{eqhelm}), since $V$ is real, thus the result will follow.\\ 
        The weak formulation of equation (\ref{eqhelm}) is
        \begin{equation}
            \int_{\D}\nabla u \nabla \phi = \int_{\D}(V-\lambda)\,u\,\varphi, \quad \forall \varphi\in H_0^1(\D).\label{weakform}
        \end{equation}
        Consider now a real cut-off function $\eta$ between $\Omega'$ and $\partial\D$, namely, $\eta\in \mathcal{C}^\infty_0(\D;\mathbb{R})$ such that $\eta\equiv 1$ in $\Omega'$ and $0\leq\eta\leq 1$ in $\D\setminus\Omega'$. Consider then the test function $\varphi=\eta^2u\in H_0^1(\D)$. With this test function, eq. (\ref{weakform}) becomes
        \[ \int_{\D}\abs{\nabla u}^2\eta^2+\int_{\D}\nabla u\,2\eta\nabla\eta\, u=\int_{\D}(V-\lambda)\,u^2\eta^2. \]
        Therefore, since $\supp V\subset \Omega'$,
        \begin{equation}\label{desint}
            \int_{\D}\abs{\nabla u}^2\eta^2\leq \abs{\int_{\D}\nabla u\,2\eta\,u\nabla\eta}+{\abs{\int_{\D}\lambda\,u^2\eta^2}}+\abs{\int_{\Omega} V\,u^2}.
        \end{equation}
        Now, let's analyze these three summands separately. For the second one we have, since $0\leq\eta\leq1$,
        \begin{equation}\label{eq:di1}
          \abs{\int_{\D}\lambda\,u^2\eta^2}\leq \lambda\int_{\D}u^2= \lambda\norm{u}^2_{L^2(\D)}.
        \end{equation}
        For the first one, set any $0<\varepsilon<1$. By Young's inequality for products, we have that $2ab\leq\varepsilon a^2+\varepsilon^{-1}b^2$ for any two $a,b\geq0$. This, along with Cauchy-Schwartz inequality gives
        \begin{align}\label{eq:di2}
          \begin{split}
          \abs{\int_{\D}\nabla u\,2\eta\,u\nabla\eta}\leq &\,\varepsilon \int_{\D}\abs{\nabla u}^2\eta^2 + \varepsilon^{-1}\int_{\D}\abs{\nabla \eta}^2u^2\\
          \leq& \,\varepsilon \int_{\D}\abs{\nabla u}^2\eta^2 + \varepsilon^{-1}\norm{\nabla \eta}^2_{L^\infty(\D)}\norm{u}^2_{L^2(\D)}.
          \end{split}
        \end{align}
        Finally, in the last summand of \eqref{desint} we will analyze $V^0$ and $V^F=\alpha\dd\sigma+\gamma^s$ separately:
        \[ \abs{\int_{\Omega'} V\,u^2}\leq\abs{\int_{\Omega'}V^0u^2}+\abs{\int_{\Omega'}V^F u^2}. \]
        To estimate the second term, use \eqref{eq:vfineq} in the proof of proposition \ref{prop:vf}. Indeed, there exists $K>0$ such that
        \[ \abs{\int_{\Omega'}V^F u^2}\leq K \left(\norm{g}_{L^\infty(\Rd)}+\norm{\alpha}_{L^\infty(\Gamma)}\right)\norm{u}_{H^s(\Omega')}^2. \]
        Now, note that in a domain with smooth boundary, the fractional order Sobolev space $H^s(\Omega')$ is equivalent to the interpolation space between $L^2(\Omega')$ and $H^1(\Omega')$ with interpolation index $s$ (see, for example, \cite{triebelinterpolation1978}), that is
        \[ \norm{u}^2_{H^s(\Omega')}\leq \norm{u}^{2(1-s)}_{L^2(\Omega')}\norm{u}^{2s}_{H^1(\Omega')}. \]
        Also, if we take $\delta>0$, using again Young's inequality for products, we have that $ab\leq \delta^{-p}\frac{a^p}{p}+\delta^{p'}\frac{b^{p'}}{p'}$, for any $a,b\geq0$ and $\frac{1}{p}+\frac{1}{p'}=1$. Choosing $p=\frac{1}{1-s}$, $p'=\frac{1}{s}$ gives
        \[ \norm{u}^2_{H^s(\Omega')}\leq (1-s)\delta^{-\frac{1}{1-s}}\norm{u}^{2}_{L^2(\Omega')}+s\delta^{1/s}\norm{u}^{2}_{H^1(\Omega')}. \]
        Therefore, since $\norm{u}_{L^2(\Omega')}\leq\norm{u}_{L^2(\D)}$, we have that
        \begin{align}\label{eq:di3}
            \abs{\int_{\Omega'}V^F u^2}
            \leq K\left(\norm{g}_{L^\infty(\Rd)}+\norm{\alpha}_{L^\infty(\Gamma)}\right)\left[(1-s)\delta^{-\frac{1}{1-s}}\norm{u}^{2}_{L^2(\D)}+
            s\delta^{1/s}\norm{u}^{2}_{H^1(\Omega')}\right].
        \end{align}
        Now, to estimate the $V^0$ term, let $N>0$ and consider the set \[F=\{x\in\Rd\,:\,|V^0(x)|>N\},\] and define $E=\Rd\setminus F$. Then, $V^0=\mathbbm{1}_E V_0+\mathbbm{1}_F V_0$, with $\norm{\mathbbm{1}_E V_0}_{L^\infty}=N$.
        Then, using Hölder inequality and the Sobolev embedding $H^1(\Omega')\subset L^{p_d}(\Omega')$,
        \begin{align}\label{eq:di4}
          \begin{split}
            \abs{\int_{\Omega'}V^0u^2}\leq&\abs{\int_{\Omega'}V^0\mathbbm{1}_Eu^2}+\abs{\int_{\Omega'}V^0\mathbbm{1}_Fu^2}\\
            \leq& N\norm{u}^2_{L^2(\Omega')}+\norm{V^0\mathbbm{1}_F}_{L^{d/2}(\Omega')}\norm{u}^2_{L^{p_d}(\Omega')}\\
            \leq& N\norm{u}^2_{L^2(\Omega')}+C\norm{V^0\mathbbm{1}_F}_{L^{d/2}(\Omega')}\norm{u}^2_{H^1(\Omega')},
          \end{split}
        \end{align}
        where $C$ is the Sobolev embedding constant. Putting \eqref{eq:di1}, \eqref{eq:di2}, \eqref{eq:di3} and \eqref{eq:di4} in inequality \eqref{desint} yields
        \[ (1-\varepsilon)\int_{\D}\abs{\nabla u}^2\eta^2\leq C_1\norm{u}^2_{L^2(\D)}+C_2\norm{u}^2_{H^1(\Omega')}, \]
        where
        \begin{align*}
        C_1&=\lambda+N+\varepsilon^{-1}\norm{\nabla\eta}^2_{L^\infty(\D)}+\left(\norm{g}_{L^\infty(\Rd)}+\norm{\alpha}_{L^\infty(\Gamma)}\right)(1-s)\delta^{-\frac{1}{1-s}},\\
        C_2&=K\left(\norm{g}_{L^\infty(\Rd)}+\norm{\alpha}_{L^\infty(\Gamma)}\right)s\delta^{1/s}+C\norm{V^0\mathbbm{1}_F}_{L^{d/2}(\Omega')}.
        \end{align*}
        Therefore we have, again since $0\leq\eta\leq 1$,
        \[ (1-\varepsilon)\norm{u}^2_{H^1(\Omega')}\leq C_1\norm{u}^2_{L^2(\D)}+C_2\norm{u}^2_{H^1(\Omega')}, \]
        so that
        \[ (1-\varepsilon-C_2)\norm{u}^2_{H^1(\Omega')}\leq C_1\norm{u}^2_{L^2(\D)}. \]
        Finally, since we can take $\varepsilon$ and $\delta$ as small as we want, and we can take $N$ as big as needed to make $\norm{V^0\mathbbm{1}_F}_{L^{d/2}}$ sufficiently small so that
        \[ C_2+\varepsilon<1. \] Therefore, we can conclude that
        \[ \norm{u}_{H^1(\Omega')}\lesssim \norm{u}_{L^2(\D)}, \]
        and the lemma is proved.
        \end{proof}

        \subsection{Proof of proposition \ref{proportho}}\label{sect:proportho}
	
        To prove proposition \ref{proportho} we will further need the following lemmas, whose proof can be again be found in \cite{caroscattering2020}. It might be of interest to note that, although we introduce a wider class of potentials, the proof is based on working away from its support, and therefore it goes through identically.
        \begin{lemma}[\cite{caroscattering2020}]\label{lemmareci}
        Let $\D$ be a bounded open domain of class $\C^2$. The scattering solution of \eqref{eqhelm} satisfies the following reciprocity relation
        \[ u_{sc}(x,y)=u_{sc}(y,x), \quad \forall x,y\in \mathbb{R}^d\setminus \supp V. \]
        In particular, the single layer potential $\mathcal{S}$ is symmetric, that is,
        \[ \int_{\partial \D}\mathcal{S}f\,g\,\dd S=\int_{\partial \D}f\,\mathcal{S}g\,\dd S, \quad \forall f,g\in\mathcal{C}(\partial \D). \]
        \end{lemma}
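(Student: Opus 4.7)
The plan is to first reduce the reciprocity of $u_{sc}$ to that of the total wave $u_{to}=u_{in}+u_{sc}$, and then establish the latter by Green's second identity. Since $u_{in}(x,y)=\Phi_\lambda(x-y)$ depends only on $x-y$ and $\Phi_\lambda$ is radial, one has $u_{in}(x,y)=u_{in}(y,x)$, so the two reciprocities are equivalent. Combining the scattering equation with $(\Delta+\lambda)u_{in}(\centerdot,y)=\delta_y$, one checks that $u_{to}(\centerdot,y)\in H^1_{\textnormal{loc}}(\mathbb{R}^d)$ solves $(\Delta+\lambda-V)u_{to}(\centerdot,y)=\delta_y$ in $\mathscr{D}'(\mathbb{R}^d)$ and satisfies SRC, so the problem reduces to the classical symmetry of the outgoing Green's function for a real self-adjoint elliptic operator.

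Fix distinct $x_0,y_0\in\mathbb{R}^d\setminus\supp V$, take $R>0$ large so that $B_R\supset\supp V\cup\{x_0,y_0\}$ and $\varepsilon>0$ small so that $B(x_0,\varepsilon)$ and $B(y_0,\varepsilon)$ are mutually disjoint and disjoint from $\supp V$. On the annular domain $\Omega_{R,\varepsilon}\defeq B_R\setminus\bigl(\overline{B(x_0,\varepsilon)}\cup\overline{B(y_0,\varepsilon)}\bigr)$ I would apply Green's second identity to $u_1=u_{to}(\centerdot,x_0)$ and $u_2=u_{to}(\centerdot,y_0)$. To accommodate the rough $V$, I would split $\Omega_{R,\varepsilon}$ into a smooth subdomain $U\supset\supp V$ (with $\partial U$ separated from $\{x_0,y_0\}$) and its complement, where $V\equiv 0$ and elliptic regularity makes $u_1,u_2$ smooth. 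On the complement, the classical Green's identity applies; on $U$, I would test the weak formulation of $(\Delta+\lambda-V)u_1=0$ against $u_2$ and conversely, and subtract, using the symmetry $\braket{Vu_1,u_2}=\braket{Vu_2,u_1}$ of each summand of the form induced by $V$ (trivial for $V^0$ and $\alpha\,\dd\sigma$, and following from $\braket{\gamma^s u,v}=\braket{g,D^s(\chi^2 uv)}$ for $\gamma^s$, as in Proposition \ref{prop:vf}) to cancel the $V$-contributions. The interior boundary integrals on $\partial U$ cancel between the two regions, leaving
\[ 0=\int_{\partial\Omega_{R,\varepsilon}}\bigl(u_2\partial_\nu u_1-u_1\partial_\nu u_2\bigr)\dd S. \]

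It remains to analyse each piece of $\partial\Omega_{R,\varepsilon}$. On $\partial B_R$, the algebraic identity
\[ u_2\partial_\nu u_1-u_1\partial_\nu u_2=u_2(\partial_\nu u_1-i\sql u_1)-u_1(\partial_\nu u_2-i\sql u_2), \]
together with the far-field bound $|u_j|+|\partial_\nu u_j|=O(R^{-(d-1)/2})$ for radiating solutions and the $o(R^{-(d-1)/2})$ decay of the Sommerfeld combinations, forces this integral to vanish as $R\to\infty$. On $\partial B(x_0,\varepsilon)$, the leading singularity $\Phi_\lambda(x-x_0)\sim-[(d-2)\omega_{d-1}]^{-1}|x-x_0|^{2-d}$ (valid for $d\geq 3$) and the smoothness of $u_2$ near $x_0$ allow a direct polar-coordinate computation showing that this piece tends to $-u_{to}(x_0,y_0)$ as $\varepsilon\to 0$; the symmetric calculation on $\partial B(y_0,\varepsilon)$ produces $+u_{to}(y_0,x_0)$. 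Thus $u_{to}(x_0,y_0)=u_{to}(y_0,x_0)$ and consequently $u_{sc}(x_0,y_0)=u_{sc}(y_0,x_0)$. The single-layer symmetry then follows from Fubini's theorem, since $\partial\D$ lies outside $\supp V$ and so the reciprocity of $u_{to}$ applies pointwise on $\partial\D\times\partial\D$:
\[ \int_{\partial\D}(\mathcal{S}f)\,g\,\dd S=\iint u_{to}(x,y)f(y)g(x)\dd S(y)\dd S(x)=\iint u_{to}(y,x)f(y)g(x)\dd S(x)\dd S(y)=\int_{\partial\D}f\,(\mathcal{S}g)\,\dd S. \]
I expect the main obstacle to be the rigorous use of Green's identity across $\supp V$: the roughness of $V$ forces one to pass through the weak formulation on $U$ and to carefully exploit the symmetry of each summand of the $V$-form; the subsequent SRC cancellation at infinity and the extraction of the $\delta$-singularities as $\varepsilon\to 0$ are standard but require careful bookkeeping.
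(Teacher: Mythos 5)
Your proof is correct and amounts to the classical Green's-identity argument for the symmetry of the outgoing Green's function of a formally self-adjoint operator. It is worth noting that the paper itself does not supply a proof of this lemma: it is attributed to and proved in \cite{caroscattering2020}, with the remark that the proof there ``is based on working away from its support, and therefore it goes through identically'' for the wider class of potentials. Your argument, by contrast, does pass through $\supp V$, via the weak formulation on the smooth subdomain $U$, and isolates the genuinely new ingredient: the symmetry of each summand of the $V$-bilinear form, in particular $\braket{\gamma^s u,v}=\braket{D^s g,\chi^2 uv}=\braket{\gamma^s v,u}$. This makes explicit why the lemma extends to the $\gamma^s$ class, which the paper leaves implicit. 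Two minor remarks. First, the pointwise far-field bound $|u_j|+|\partial_\nu u_j|=O(R^{-(d-1)/2})$ you invoke for $u_{sc}$ is more than is needed and is not immediate from $u_{sc}\in X_\lambda^*$; the standard route uses only the uniform $O(1)$ bound on $\int_{\partial B_R}|u_j|^2\,\dd S$ (a consequence of SRC and Green's identity, essentially Rellich's argument) together with $\int_{\partial B_R}|\partial_\nu u_j - i\sql u_j|^2\,\dd S=o(1)$, and then Cauchy--Schwarz. Second, when testing the weak formulation on $U$ against $u_2$ you should record that the boundary pairing on $\partial U$ makes sense because $u_1,u_2$ are smooth in a neighbourhood of $\partial U$ (there $V\equiv 0$ and elliptic regularity applies); decomposing the test function into a compactly supported part plus a part supported near $\partial U$ then makes the integration by parts rigorous. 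These are points of bookkeeping, not gaps; your proposal is a complete and correct proof.
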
 
        The following lemma is also proved in the appendix, lemma \ref{lem:single}. From now on, when necessary, we will denote by $u_+$ the trace on $\partial \D$ of $\restr{u}{\Rd\setminus \overline{\D}}$, and by $u_-$ the trace on $\partial \D$ of $\restr{u}{\D}$. As well, we will denote by $\partial_\nu u_+$ and $\partial_\nu u_-$ the normal derivative of those, always with respect to the outward-pointing normal vector of $\partial \D$ (as seen from inside $\D$).
        \begin{lemma}[\cite{caroscattering2020}]\label{lemmaslp}
        Consider $d\geq3$, and let $\D$ be a bounded open domain of class $\C^2$. Let $f\in\mathcal{C}(\partial \D)$. Then, $u=\mathcal{S}f$ is the unique solution in $H^1_{\text{loc}}(\mathbb{R}^d)$ to the problem
        \begin{equation*}
            \begin{cases}
            \helm u = 0 & \textrm{in }\mathbb{R}^d\setminus \partial \D,\\
            \partial_\nu u_--\partial_\nu u_+ = f & \textrm{on }\partial \D,\\ 
            u\textrm{ satisfying SRC}.
            \end{cases}
        \end{equation*}
        \end{lemma}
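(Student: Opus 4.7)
The plan is to decompose the kernel as $u_{to}(x,y)=\Phi_\lambda(x-y)+u_{sc}(x,y)$, which yields a splitting $\mathcal{S}f=\mathcal{S}_{in}f+\mathcal{S}_{sc}f$. The first summand is the classical Helmholtz single-layer potential with density $f$, whose mapping and jump properties on a $\C^2$ boundary are standard. The second summand has a kernel which, since by hypothesis $\supp V$ lies at positive distance from $\partial\D$, satisfies $(\Delta_x+\lambda)u_{sc}(x,y)=0$ in a neighborhood of $\partial\D$ and is therefore jointly smooth on a neighborhood of $\partial\D\times\partial\D$ by Theorem 11.1.1 in \cite{hormanderlinear1963}. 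Thus $\mathcal{S}_{sc}f$ is smooth across $\partial\D$ and contributes neither to the jump nor to the singular part of the PDE, reducing every claim about $\mathcal{S}f$ to known facts about $\mathcal{S}_{in}f$ corrected by a smooth function.

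For existence I would verify the three conditions in turn. The PDE away from $\partial\D$ follows from the termwise identity $(\Delta_x+\lambda-V(x))u_{to}(x,y)=\delta_y(x)$, obtained by adding $(\Delta+\lambda)\Phi_\lambda(\centerdot-y)=\delta_y$ to the defining equation $(\Delta+\lambda-V)u_{sc}(\centerdot,y)=Vu_{in}(\centerdot,y)$; integrating against $f(y)\dd S(y)$ then yields a distribution supported on $\partial\D$, hence vanishing on $\Rd\setminus\partial\D$. The jump relation $\partial_\nu u_--\partial_\nu u_+=f$ on $\partial\D$ reduces to the classical jump formula for the Helmholtz single layer applied to $\mathcal{S}_{in}f$, together with the continuity of $\partial_\nu\mathcal{S}_{sc}f$ across $\partial\D$. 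Finally, SRC for $\mathcal{S}f$ is inherited from the SRC for $u_{to}(\centerdot,y)$ by dominated convergence, once one checks that the SRC limit is uniform in $y$ over the compact set $\partial\D$ (which holds because both $\Phi_\lambda(\centerdot-y)$ and $u_{sc}(\centerdot,y)$ enjoy the classical asymptotic expansion with remainder controlled by $|x|^{-(d+1)/2}$ uniformly in $y\in\partial\D$).

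For uniqueness, suppose $u\in H^1_{\text{loc}}(\Rd)$ solves the homogeneous version of the problem. The $H^1_{\text{loc}}$ hypothesis gives $u_-=u_+$ on $\partial\D$ in the trace sense, and together with the assumed jump condition $\partial_\nu u_-=\partial_\nu u_+$ this ensures that for any $\varphi\in\mathcal{C}^\infty_c(\Rd)$ the boundary contributions produced by integrating by parts separately on $\D$ and on $\Rd\setminus\overline\D$ cancel, so that $(\Delta+\lambda-V)u=0$ holds distributionally on all of $\Rd$. Combined with SRC, the uniqueness lemma for homogeneous solutions established at the end of Section \ref{sect:fred} (via the Carleman estimate \eqref{eq:carlemanest}) forces $u\equiv 0$.

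The delicate point is the distributional identification $(\Delta+\lambda-V)u_{to}(\centerdot,y)=\delta_y$ near $\partial\D$ together with the jump computation: although this ultimately reduces to the textbook Helmholtz case, one must verify carefully that $u_{sc}$ brings no additional singularity on $\partial\D$. This is precisely where the strict inclusion $\supp V\subset\D$ is used, since it guarantees positive distance between the density support $\partial\D$ and the region where $V$ can affect the regularity of $u_{sc}(\centerdot,y)$. Once this point is granted, the argument is classical single-layer potential theory glued to the SRC uniqueness already available from the direct-problem section.
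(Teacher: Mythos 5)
Your route is the same as the paper's: the existence half of this lemma is proved in Appendix \ref{ap:neumann} (Lemma \ref{lem:single}) by exactly your decomposition $\mathcal{S}f=\mathcal{S}_{in}f+\mathcal{S}_{sc}f$, using the classical jump relations for the Helmholtz single layer on a $\C^2$ boundary and the smoothness of $u_{sc}(\cdot\,,y)$ away from $\supp V$ (H\"ormander's Theorem 11.1.1), and your uniqueness argument (zero jumps for an $H^1_{\text{loc}}$ solution glue the two pieces into a global radiating solution of $(\Delta+\lambda-V)u=0$, which is then killed by the Carleman-based uniqueness lemma at the end of Section \ref{sect:direct}) is the natural completion of what the paper cites from \cite{caroscattering2020}.

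There is, however, one genuine omission: you never establish that $\mathcal{S}f$ (equivalently $\mathcal{S}_{sc}f$) belongs to $H^1_{\text{loc}}(\Rd)$ globally, which is part of the statement and is also needed to make sense of your distributional identity, since pairing $V=V^0+\gamma^s+\alpha\dd\sigma$ with $\mathcal{S}f$ requires $H^1$ (or at least $H^s$) regularity of $\mathcal{S}f$ near $\supp V$. Your reduction ``classical part plus a smooth correction'' is valid only on $\Rd\setminus\supp V$: near $\supp V$ the kernel $u_{sc}(x,y)$ is not smooth, so the smooth-correction argument fails precisely on the set where $V$ lives. The paper closes this point quantitatively, via Minkowski's integral inequality together with Proposition \ref{prop:restriction} and Theorem \ref{thm:1.1}: for any compact $K$, $\norm{\mathcal{S}_{sc}f}_{H^1(K)}\lesssim\norm{f}_{L^1(\partial\D)}\,\sup_{y\in\partial\D}\norm{u_{sc}(\cdot\,,y)}_{X_\lambda^*}\lesssim\sup_{y\in\partial\D}\norm{Vu_{in}(\cdot\,,y)}_{X_\lambda}$, and the last supremum is finite because one may insert a cut-off $\eta$ with $\eta\equiv1$ on $\supp V$ and $\eta\equiv0$ near $\partial\D$, so that $Vu_{in}(\cdot\,,y)=V\,\eta u_{in}(\cdot\,,y)$ with $\eta u_{in}(\cdot\,,y)$ smooth uniformly in $y\in\partial\D$, and multiplication by $V$ is bounded from $X_\lambda^*$ to $X_\lambda$. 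Once this estimate is added, the rest of your sketch (termwise PDE identity, jump relation carried by $\mathcal{S}_{in}f$ alone, SRC by uniformity in $y$ over the compact set $\partial\D$, and uniqueness) goes through as in the paper.
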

        After these considerations, we will proceed by first proving that the orthogonality relation is fullfiled by solutions represented as single layer potentials with densities supported in $\Sigma_1$ and $\Sigma_2$. The proof is similar to that of proposition 3.3 in \cite{caroscattering2020}.
        \begin{lemma}
          Consider $d\geq 3$. Let $V_1$ and $V_2$ be two potentials of the form \eqref{V}, and let $\Sigma_1,\Sigma_2$ be two relatively open sets of dimension $d-1$, separated from $\supp V$, and that can be expressed as the graph of $\C^2$ functions. Choose a bounded open domain $\D$ of class $\C^2$ such that $\Sigma_1,\Sigma_2\subset\partial\D$ and $\supp V_j\subset \D$,  $j=1,2$,.\\ Then, there exists $\lambda_0=\lambda_0(V,d)$ such that, for every $\lambda\geq \lambda_0$, it holds that \[ \restr{u_{sc,1}}{\Sigma_1\times\Sigma_2}=\restr{u_{sc,2}}{\Sigma_1\times\Sigma_2} \implies
		    \braket{(V_1-V_2)v_1,v_2}=0,\] \[ \text{for all } v_1,v_2\in H^1(\D)\textrm{ such that }v_j=\mathcal{S}_jf_j,\textrm{ for some }f_j\in\mathcal{C}(\partial \D) \textrm{ supported in } \Sigma_j.\]  
        \end{lemma}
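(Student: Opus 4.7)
The plan is to rewrite $\langle (V_1-V_2)v_1,v_2\rangle$ as a boundary integral over $\Sigma_1\times\Sigma_2$ involving $u_{sc,2}-u_{sc,1}$, which then vanishes by hypothesis. The argument parallels the proof of Proposition~3.3 in \cite{caroscattering2020}, adapted to account for the rough part of the potential.

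First, I would invoke Lemma \ref{lemmaslp} to note that $v_j=\mathcal{S}_jf_j$ solves $(\Delta+\lambda-V_j)v_j=0$ in $\Omega$ (and in $\mathbb{R}^d\setminus\overline{\Omega}$), and satisfies SRC, while its trace is continuous across $\partial\Omega$ and its normal derivative jumps by $f_j$. Using Green's identity in $\Omega$, interpreted in the weak sense so that the duality $\langle V_j v_j, v_k\rangle$ is realized through the bilinear form introduced in Proposition \ref{prop:vf}, I would obtain
\begin{equation*}
    \langle (V_1-V_2)v_1,v_2\rangle=\int_{\partial\Omega}\bigl(\partial_\nu v_{1,-}\,v_2-v_1\,\partial_\nu v_{2,-}\bigr)\dd S.
\end{equation*}
Here the $V_j$-terms cancel pairwise because $\langle V_1 v_1,v_2\rangle=\langle v_1,V_2 v_2\rangle$ only after adding and subtracting $\langle V_jv_j,v_k\rangle$; it is the operator $(\Delta+\lambda)$ that produces the boundary terms via Green.

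Next, I would carry out the same manipulation in $\mathbb{R}^d\setminus\overline{\Omega}$. Since $V_j$ vanishes there, $v_j$ satisfies $(\Delta+\lambda)v_j=0$ outside $\Omega$ along with SRC. Integrating over $B_R\setminus\overline{\Omega}$, applying Green's identity, and letting $R\to\infty$, the standard Sommerfeld argument shows that the boundary contribution on $\partial B_R$ vanishes, yielding
\begin{equation*}
    \int_{\partial\Omega}\bigl(\partial_\nu v_{1,+}\,v_2-v_1\,\partial_\nu v_{2,+}\bigr)\dd S=0.
\end{equation*}
Subtracting the two identities and using the jump relation $\partial_\nu v_{j,-}-\partial_\nu v_{j,+}=f_j$ from Lemma \ref{lem:single}, I obtain
\begin{equation*}
    \langle (V_1-V_2)v_1,v_2\rangle=\int_{\partial\Omega}\bigl(f_1\,v_2-v_1\,f_2\bigr)\dd S.
\end{equation*}

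Finally, I would expand $v_j$ via its defining representation, $v_k(x)=\int_{\partial\Omega}f_k(y)\,u_{to,k}(x,y)\dd S(y)$, and apply Fubini (justified by the weak singularity of $u_{to,k}$, as in Lemma \ref{lemma:runge2}) together with the reciprocity relation from Lemma \ref{lemmareci}. After relabelling the integration variables, this transforms the right-hand side into
\begin{equation*}
    \int_{\Sigma_1\times\Sigma_2}f_1(x)\,f_2(y)\bigl(u_{to,2}(x,y)-u_{to,1}(x,y)\bigr)\dd S(x)\dd S(y),
\end{equation*}
which equals $\int_{\Sigma_1\times\Sigma_2}f_1(x)f_2(y)(u_{sc,2}-u_{sc,1})\dd S(x)\dd S(y)$ because the incident wave is common to both potentials. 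By hypothesis this last integrand vanishes on $\Sigma_1\times\Sigma_2$, and the conclusion follows. The main obstacle is the first step: Green's identity in $\Omega$ must be justified despite the low regularity of the potentials. This requires interpreting $\langle V_j v_j,v_k\rangle$ through the $H^s(\Omega)\times H^s(\Omega)$ pairing furnished by Proposition \ref{prop:vf}, together with the fact that $v_j\in H^1(\Omega)$ (hence in $H^s(\Omega)$), so that the bilinear form is symmetric and the formal cancellation used above is rigorous.
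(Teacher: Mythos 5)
Your proof follows essentially the same route as the paper's: Green's identity inside $\Omega$ to get the $\partial_\nu v_{j,-}$ boundary terms, Green's identity in $B_R\setminus\overline{\Omega}$ plus SRC to kill the $\partial_\nu v_{j,+}$ terms, the jump relation from Lemma \ref{lem:single}, and then reciprocity and the support argument on $\Sigma_1\times\Sigma_2$. The only cosmetic difference is that the paper keeps the result as $\int_{\partial\D}[\mathcal{S}_2-\mathcal{S}_1]f_1 f_2\,\dd S$ rather than expanding the kernel explicitly, but this is the same identity; your extra remark on justifying the $V_j$-terms via the symmetric $H^s$ bilinear form of Proposition \ref{prop:vf} is a valid elaboration the paper leaves implicit.
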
\label{lemma:rungeorto}
        \begin{proof}
          Integrating in $\D$ and using Green's identity gives
          \begin{equation}\label{v1v2b0}
          \braket{(V_1-V_2)v_1,v_2}=\int_{\partial \D} (v_2\partial_\nu {v_1}_--v_1\partial_\nu {v_2}_-)\,\dd S,
          \end{equation}
          while doing so in $B\setminus \overline{\D}$, where $B\defeq\{x\in\mathbb{R}^d\,:\,\abs{x}<R\}$ gives
          \begin{align*}
              0=&\int_{B\setminus \overline{\D}}(\Delta+\lambda)v_1v_2=\\=&\int_{\partial B} (v_2\partial_\nu v_1-v_1\ndev v_2)\,\dd S-\int_{\partial \D} (v_2\partial_\nu {v_1}_+-v_1\ndev {v_2}_+)\,\dd S,
          \end{align*}
          since $v_1$ and $v_2$ are solutions to $(\Delta+\lambda)\,u=0$ in $\Rd\setminus\overline{\D}$ by lemma \ref{lemmaslp}.
          Here, making $R\to\infty$ and applying SRC \eqref{eq:src} yields
          \begin{equation}\label{itszero}
          \int_{\partial \D} (v_2\partial_\nu {v_1}_+-v_1\ndev {v_2}_+)\,\dd S=0.
          \end{equation}
          Then, inserting \eqref{itszero} in \eqref{v1v2b0},
          \begin{equation}
          \braket{(V_1-V_2)v_1,v_2}=\int_{\partial \D} \left[v_2\,(\ndev{v_1}_- -\ndev{v_1}_+)-v_1\,(\ndev{v_2}_--\partial_\nu {v_2}_+)\right]\,\dd S.
          \end{equation}
          Now, since $v_j=\mathcal{S}_j\,f_j$, applying lemma \ref{lemmaslp},
          \[ \braket{(V_1-V_2)v_1,v_2}=\int_{\partial \D} (\mathcal{S}_2 f_2\,f_1-\mathcal{S}_1 f_1\, f_2)\,\dd S, \]
          and, by lemma \ref{lemmareci},
          \[ \braket{(V_1-V_2)v_1,v_2}=\int_{\partial \D} \left[\mathcal{S}_2-\mathcal{S}_1\right] f_1\,f_2\,\dd S. \]
          Finally, since $u_{sc,1}=u_{sc,2}$ in $\Sigma_1\times\Sigma_2$ by assumption, the kernel of $\mathcal{S}_2-\mathcal{S}_1$ is supported in $(\partial \D\times\partial \D)\setminus(\Sigma_1\times\Sigma_2)$, but $\supp (f_1\otimes f_2)\subset \Sigma_1\times\Sigma_2$, so we conclude that 
          \[ \braket{(V_1-V_2)v_1,v_2}=0, \]
          and the lemma is proved. 
        \end{proof}
        Now, we are set for the proof of the orthogonality relation. We restate the proposition again:
        \proportho*
        \begin{proof}
            Now let $v_1,v_2\in H^1(\D)$ be such that to $\left(\Delta +\lambda- V_j\right)v_j=0 \textrm{ in } \D$. Let $\Omega''$ and $\D'$ be two open domains such that \[ \supp V_j\subset \Omega''\subset\subset \D'\subset\subset \D,\quad j=1,2, \]
            and let $(v_j^{(m)})_{m\in\mathbb{N}}\subset X^{\Sigma_j}$, $j=1,2$, be two sequences such that $v_j^{(m)}\xrightarrow{L^2(\D')}v_j$, which exist in virtue of proposition \ref{prop:runge}, meaning that $v_j^{(m)}=\mathcal{S}_j f_j^{(m)}$ for some $f_j^{(m)}$ supported in $\Sigma_j$. Then,
            \begin{align}\label{sum}
            \begin{split}
            \braket{(V_1-V_2)v_1,v_2}=&\braket{(V_1-V_2)(v_1-v_1^{(m)}),v_{2}}+\braket{(V_1-V_2)v_1^{(m)},v_2-v_2^{(m)}}\\&+\braket{(V_1-V_2)v_1^{(m)},v_2^{(m)}}.
            \end{split}
            \end{align}
            By lemma \ref{lemma:rungeorto}, we have that 
            \begin{equation}\label{sum3}
            \braket{(V_1-V_2)v_1^{(m)},v_2^{(m)}}=0,\quad \forall m\in\mathbb{N}.
            \end{equation} 
            We will now show that the two first summands converge to zero as $m\to\infty$. Note first that, for $\phi,\varphi\in H^1(\Omega'')$, it holds that
            \[ \braket{V^0_j\,\phi,\varphi}\leq \norm{V^0_j}_{L^{d/2}}\,\norm{\phi}_{L^{p_d}(\Omega'')}\,\norm{\varphi}_{L^{p_d}(\Omega'')}\lesssim \norm{\phi}_{H^1(\Omega'')}\norm{\varphi}_{H^1(\Omega'')}, \]
            and
            \[ \braket{V^F\,\phi,\varphi}\lesssim\left(\norm{g}_{L^\infty(\Rd)}+\norm{\alpha}_{L^\infty(\Gamma)}\right)\norm{\phi}_{H^s(\Omega'')}\,\norm{\varphi}_{H^s(\Omega'')}\lesssim \norm{\phi}_{H^1(\Omega'')}\norm{\varphi}_{H^1(\Omega'')}. \] 
            We have used in the estimates above the Hardy-Littlewood-Sobolev inequality and inequality \eqref{eq:vfineq} in proposition \ref{prop:vf}.
             Therefore, lemma \ref{lemma:int} yields
             \begin{equation}\label{eq:sum1}
                \braket{(V_1-V_2)(v_1-v_1^{(m)}),v_{2}} \lesssim \norm{v_1-v_1^{(m)}}_{H^1(\Omega'')}\,\norm{v_2}_{H^1(\Omega'')} \lesssim \norm{v_1-v_1^{(m)}}_{L^2(\D')}.
             \end{equation}
            In the second summand we proceed as in the first one. Note that, since $v_1^{(m)}$ are also solutions in $H^1(\D)$ of $\helm u = 0$, by lemma \ref{lemma:int},
            \[ \norm{v_1^{(m)}}_{H^1(\Omega'')}\leq \norm{v_1^{(m)}}_{L^2(\D')}, \quad \forall m\in\mathbb{N}, \]
            and, since $(v_1^{(m)})_{m\in\mathbb{N}}$ is a convergent sequence, it must also be bounded. This is, $\exists C\geq0$ such that
            \[ \norm{v_1^{(m)}}_{L^2(\D')}\leq C, \quad \forall m\in\mathbb{N}. \]
            Thus, we obtain
            \begin{equation}
                \label{sum2}\lvert\braket{(V_1-V_2)v_1^{(m)},v_2-v_2^{(m)}}\rvert\lesssim\norm{v_2-v_2^{(m)}}_{L^2(\D')}.
            \end{equation}
            By putting \eqref{sum3}, \eqref{eq:sum1} and \eqref{sum2} in \eqref{sum} we obtain
            \begin{equation*}
            \lvert\braket{(V_1-V_2)v_1,v_2}\rvert\lesssim \norm{v_1-v_1^{(m)}}_{L^2(\D')}+\norm{v_2-v_2^{(m)}}_{L^2(\D')}\xrightarrow{m\to\infty}0,
            \end{equation*}
            which proves our statement.
        \end{proof}

        \section{CGO solutions and proof of theorem \ref{thm:1.2}} \label{sect:cgo}
        Now that we have proved the Alessandrini-type identity in proposition \ref{proportho}, we can test it with the CGO solutions. These will be solutions of the form \begin{equation}\label{eq:cgo}v_j(x)=e^{\zeta_j\cdot x}(1+w_j(x)),\end{equation}
        for some $\zeta_j\in\mathbb{C}^d$. We will follow once again the arguments in \cite{caroscattering2020} and \cite{caroglobal2016} for this section. In particular, applying the operator $(\Delta+\lambda-V)$ in \eqref{eq:cgo} yields
        \[ (\Delta+2\zeta_j\cdot\nabla+\zeta_j\cdot\zeta_j+\lambda-V)w_j=V-\lambda-\zeta_j\cdot\zeta_j. \]
        Now if $\zeta_j$ are chosen such that $\zeta_j\cdot\zeta_j=-\lambda$, we obtain
        \[ (\Delta+2\zeta_j\cdot\nabla-V)w_j=V. \]
        Therefore, to prove the existence of these solutions, it is enough to prove injectivity of the formal adjoint $(\Delta-2\zeta_j\cdot\nabla-V)$. This can be done via a priori estimates. This estimate will be proved in section \ref{sect:carleman} in the relevant spaces, based on a Carleman estimate for the laplacian by Caro and Rogers \cite{caroglobal2016}, that can be perturbed to include the potentials $V$.  The inequalities will be analogous to those in \cite{caroscattering2020} and \cite{caroglobal2016} but, besides adding the potential $\gamma^s$, we will follow a slightly different order in rotating the inequalities and adding the potentials.\\
        Later, in section \ref{sect:inversethm} we will end the proof of Theorem \ref{thm:1.2}. We will further choose $\zeta_j$ to fulfill $\zeta_1+\zeta_2=-i\kappa$ for an arbitray $\kappa\in\Rd$ -which is possible in dimension $d\geq3$-, and the correction term $w_j$ will vanish in a certain sense when $|\zeta_j|$ grows. Here the approach will be based in \cite{caroscattering2020,caroglobal2016,habermanuniqueness2015}.

        \subsection{Existence of CGO solutions}\label{sect:carleman}

        As we mentioned in the introduction, we will prove the existence of CGO solutions via a priori estimates. For $s\in\mathbb{R}$ and $\zeta\in\mathbb{C}^d$ we define the inhomogeneous Bourgain space $X_\zeta^s$ as the space of distributions $u\in\mathscr{S}'(\Rd)$ such that $\widehat{u}\in L^2_\text{loc}(\Rd)$ and 
\[ \norm{u}_{X_\zeta^s}=\norm{(M|\Re(\zeta)|^2+M^{-1}|p_\zeta|^2)^{s/2}\,\widehat{u}}_{L^2}<\infty, \]
endowed with the norm $\norm{\centerdot}_{X_\zeta^s}$, with $M>1$, where $\Re$ denotes the real part and 
\[ p_\zeta(\xi)=-|\xi|^2+2i\zeta\cdot\xi+\zeta\cdot\zeta. \] 
These spaces were originally considered by Haberman and Tataru in \cite{habermanuniqueness2013}, then by Haberman in \cite{habermanuniqueness2015}, by Caro and Rogers in \cite{caroglobal2016}, and by Caro and García in \cite{caroscattering2020}. To prove the a priori estimate, we draw from Theorem 2 in \cite{caroglobal2016}. From now on, set $\zeta=\tau\theta+i\mathcal{I}$, with $\tau>0$, $\theta\in\mathbb{S}^{d-1}$, $\mathcal{I}\in\Rd$, and $\zeta\cdot\zeta\leq 0$, and define 
\[ \varphi_\zeta(x)=M\frac{(x\cdot\theta)^2}{2}+x\cdot\zeta. \]
Then, Theorem 2 in \cite{caroglobal2016} is roughly equivalent to the following:
\begin{theorem}[\cite{caroglobal2016}, Theorem 2]\label{thm:cgo}
Take $R_0>0$. There is an absolute constant $C>0$ such that, if $M>CR_0^2$ then, 
\[ \norm{u}_{X_{-\tau e_n}^\half}\leq C R_0\norm{e^{\varphi_{\tau e_n}}\Delta(e^{-\varphi_{\tau e_n}} u)}_{X_{-\tau e_n}^{-\half}} \]
for $u\in\mathscr{S}(\Rd)$ with $\supp u\subset B_{R_0}=\{x\in\Rd:|x|<R_0\}$ and $\tau > 8MR_0$.
\end{theorem}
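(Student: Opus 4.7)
The plan is to combine the classical convexification commutator identity with a frequency-localization argument in the Bourgain spaces. Taking $\zeta = \tau e_n$ (so that $\varphi_\zeta(x) = \frac{M}{2}x_n^2 + \tau x_n$), a direct differentiation gives
\[ L_\varphi u \defeq e^{\varphi_\zeta}\Delta(e^{-\varphi_\zeta}u) = \Delta u - 2(Mx_n + \tau)\partial_n u + \bigl((Mx_n + \tau)^2 - M\bigr)u. \]
The constant-coefficient part $L_\zeta u = \Delta u - 2\tau\partial_n u + \tau^2 u$ has Fourier symbol $p_{-\zeta}(\xi) = -|\xi|^2 - 2i\tau\xi_n + \tau^2$, which vanishes on the $(d-2)$-sphere $\{|\xi|=\tau,\,\xi_n=0\}$. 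It is exactly this degeneracy that the weight $w(\xi) = (M\tau^2 + M^{-1}|p_{-\zeta}(\xi)|^2)^{1/2}$ repairs, being bounded below pointwise by $\sqrt{M}\,\tau$, and which the convexification term will compensate through a positive commutator.

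The next step is the core positivity argument. Writing $L_\varphi = S + A$ with $S$ formally symmetric and $A$ formally antisymmetric with respect to the $L^2$ pairing, integration by parts yields
\[ \norm{L_\varphi u}_{L^2}^2 = \norm{Su}_{L^2}^2 + \norm{Au}_{L^2}^2 + \langle [S,A]u,\,u\rangle, \]
and the Hessian $\partial_n^2\varphi_\zeta = M$ forces $[S,A]$ to contribute a positive term of the form $M\norm{\partial_n u}_{L^2}^2 + M\tau^2\norm{u}_{L^2}^2$ plus controlled lower-order remainders. This is the classical convexification mechanism. To lift this $L^2$ inequality to the Bourgain estimate I would decompose frequency space into the region where $|p_{-\zeta}|$ is large---on which Plancherel applied to $L_\zeta$ alone already controls $\norm{w^{1/2}\hat u}_{L^2}$---and the neighborhood of the characteristic set, where the convexification commutator supplies the missing bound through the $M\tau^2$ gain. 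The two factors entering $w$ are precisely calibrated so that these regimes match.

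The main obstacle is controlling the cross terms generated by the convexification remainder $\mathcal{E}u = -2Mx_n\partial_n u + (M^2 x_n^2 + 2M\tau x_n - M)u$ inside the Bourgain norms. In frequency, multiplication by $x_n$ becomes $-i\partial_{\xi_n}$, so the commutators of $\partial_{\xi_n}$ with $w^{\pm 1/4}$ produce errors whose size is governed by $\partial_{\xi_n} w$. Here the quantitative hypotheses enter in an essential way: $M > CR_0^2$ ensures that the quadratic-in-$x_n$ contributions can be dominated by the positive commutator, while $\tau > 8MR_0$ makes the first-order perturbation $Mx_n\partial_n u$ subordinate to the principal first-order term $\tau\partial_n u$. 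The support condition $\supp u\subset B_{R_0}$ is what renders all these bounds uniform by limiting $|x_n|\leq R_0$, and tracking constants through the absorption argument yields the estimate with factor $CR_0$ as stated.
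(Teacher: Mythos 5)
The paper does not prove this statement at all; it is imported verbatim as Theorem~2 of Caro--Rogers \cite{caroglobal2016} and used as a black box, after which the paper only performs the elementary rotation and translation conjugations to pass from $\zeta = \tau e_n$ to general $\zeta$ (the display \eqref{eq:carleman1}). So there is no ``paper proof'' to compare against; the relevant comparison is with the Caro--Rogers argument.

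Your sketch correctly sets up the classical convexification framework. The expansion $L_\varphi u = \Delta u - 2(Mx_n+\tau)\partial_n u + ((Mx_n+\tau)^2 - M)u$ is right, the characteristic set of $p_{-\tau e_n}$ is correctly identified as the $(d-2)$-sphere $\{|\xi|=\tau,\ \xi_n=0\}$, and the commutator computation is essentially accurate: with $S=\Delta+(Mx_n+\tau)^2$ and $A=-2(Mx_n+\tau)\partial_n-M$ one finds $[S,A] = -4M\partial_n^2 + 4M(Mx_n+\tau)^2$, so $\langle [S,A]u,u\rangle = 4M\norm{\partial_n u}_{L^2}^2 + 4M\norm{(Mx_n+\tau)u}_{L^2}^2$, which is $\gtrsim M\norm{\partial_n u}_{L^2}^2 + M\tau^2\norm{u}_{L^2}^2$ once $\tau > 8MR_0$ and $\supp u\subset B_{R_0}$ force $(Mx_n+\tau)^2\sim\tau^2$. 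You also correctly identify where the hypotheses $M>CR_0^2$ and $\tau>8MR_0$ must enter.

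The genuine gap is the passage from the $L^2$ inequality to the Bourgain-norm inequality, which your last paragraph treats as a technical loose end but which is in fact the entire content of the theorem. The positive-commutator identity yields $\norm{L_\varphi u}_{L^2}\gtrsim M^{1/2}\tau\norm{u}_{L^2}$, i.e.\ it controls the solution by the \emph{$L^2$ norm} of $L_\varphi u$. The statement to be proved controls $\norm{u}_{X^{1/2}_{-\tau e_n}}$ by $\norm{L_\varphi u}_{X^{-1/2}_{-\tau e_n}}$, and the weight $(M\tau^2+M^{-1}|p_{-\tau e_n}|^2)^{-1/4}$ is bounded above by $(M^{1/2}\tau)^{-1/2}$, so near the characteristic set $\norm{L_\varphi u}_{X^{-1/2}}$ is a \emph{strictly weaker} quantity than $\norm{L_\varphi u}_{L^2}$, smaller by the factor $(M^{1/2}\tau)^{1/2}$. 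One therefore cannot ``lift'' the $L^2$ estimate by interpolating or weighting both sides symmetrically; the right-hand side must actually be \emph{downgraded}, and this gain is exactly what the frequency decomposition and the fine cancellation structure of $p_\zeta$ are used to produce in \cite{caroglobal2016}. Moreover, the two regimes you propose to treat separately are coupled by the variable-coefficient remainder $\mathcal{E}u = -2Mx_n\partial_n u + (M^2x_n^2+2M\tau x_n - M)u$: multiplication by $x_n$ does not preserve frequency localization, so splitting $\hat u$ into ``near'' and ``far'' pieces and applying the respective bounds generates cross terms that are of the same order as the estimate you are trying to prove, and controlling them requires the quantitative commutator estimates for the Bourgain weight (the role of the compact support and the hypothesis $M>CR_0^2$ in \cite{caroglobal2016}) rather than the informal $\partial_{\xi_n}w$ heuristic. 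Until that absorption is carried out with explicit constants, the sketch does not constitute a proof, and reproducing it is not lighter than simply citing Caro--Rogers, which is what the paper does.
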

We will transform the inequality to make it work for an arbitrary $\zeta$. If for $\zeta=\tau\theta+i\mathcal{I}$ as above we take $Q\in SO(d)$ to be a rotation such that $Qe_n=\theta$, and denote by $Q^T$ its traspose, and by $Q_*$ its pullback, then it is easy to check that $\varphi_{\tau e_n}=Q_*\varphi_{\tau\theta}$ and $p_{-\tau e_n}=Q_*p_{-\tau\theta}$. Thus, if $u\in\mathscr{S}(\Rd)$ with $\supp u\subset B_{R_0}$ and $\tau > 8MR_0$, we have
\begin{align*}
   \norm{u}_{X^\half_{-\tau\theta}}=&\norm{Q_*u}_{X^\half_{-\tau e_n}}\leq C R_0\norm{e^{\varphi_{\tau e_n}}\Delta(e^{-\varphi_{\tau e_n}}Q_* u)}_{X_{-\tau e_n}^{-\half}}=\\
   &C R_0\norm{Q_*[e^{\varphi_{\tau e_n}}\Delta(e^{-\varphi_{\tau e_n}} u)]}_{X_{-\tau e_n}^{-\half}}=C R_0\norm{e^{\varphi_{\tau \theta}}\Delta(e^{-\varphi_{\tau \theta}} u)}_{X_{-\tau \theta}^{-\half}},
\end{align*}
where we have used Theorem \ref{thm:cgo} and the fact that rotations commute both with the laplacian and the Fourier transform.
On the other hand, if $\zeta=\tau\theta+i\Ii$ and we denote by $\mathcal{N}_\Ii$ the translation operator by $\Ii\in\Rd$, it's again easy to check that $p_{-\zeta}=\mathcal{N}_\Ii p_{-\tau\theta}$. Therefore, keeping in mind that $\mathcal{N}_{-\Ii}\widehat{u}(\xi)=\widehat{e^{-i\Ii\cdot}u}(\xi)$, we obtain
\begin{align}\label{eq:carleman1}
    \begin{split}
        \norm{u}_{X^\half_{-\zeta}}&=\norm{e^{-i\Ii\centerdot}u}_{X^\half_{-\tau\theta}}\leq  C R_0\norm{e^{\varphi_{\tau \theta}}\Delta(e^{-\varphi_{\tau \theta}}e^{-i\Ii\centerdot} u)}_{X_{-\tau \theta}^{-\half}} \\
        &=C R_0\norm{e^{-i\Ii\centerdot}e^{\varphi_{\zeta}}\Delta(e^{-\varphi_{\zeta}} u)}_{X_{-\tau \theta}^{-\half}}=C R_0\norm{e^{\varphi_{\zeta}}\Delta(e^{-\varphi_{\zeta}} u)}_{X_{-\zeta}^{-\half}}
    \end{split}
\end{align}
Now we can perturb this inequality with the term $\lambda-V$ in the operator. The following inequalities will be of interest during the whole argument. For any $\zeta\in\mathbb{C}^d$ such that $\zeta\cdot\zeta\leq0$ and $u$ compactly supported, we have the following 
\begin{align}
    &\norm{u}_{L^2}\leq M^{-1/4}|\Re(\zeta)|^{-\half}\norm{u}_{X^\half_\zeta},\label{eq:desx1}\\
    &\norm{u}_{L^{p_d}}\lesssim M^{1/4}\norm{u}_{X^\half_\zeta}.\label{eq:desx2}
\end{align}
Inequality \eqref{eq:desx2} is a direct consequence of Haberman's embedding \cite{habermanuniqueness2015} 
\begin{equation}\label{eq:haber}
    \norm{u}_{L^{p_d}}\lesssim \norm{u}_{\dot X^\half_\zeta},
\end{equation}
where the space $\dot X^s_\zeta$ is defined by the norm $\norm{u}_{\dot X_\zeta^s}=\norm{|p_\zeta|^{s}\,\widehat{u}}_{L^2}.$
We are going to quantify $(\lambda-V)\,u$ in $X^{-\half}_\zeta$ by duality. Remember that in our case $|\Re(\zeta)|=\tau$. Start by estimating $\lambda u$ with $\mathscr{S}(\Rd)$. Let $v\in\mathscr{S}(\Rd)$, then by Cauchy-Schwartz inequality,
\begin{equation}\label{eq:pert1}
    \lvert\braket{\lambda\,u,v}\rvert\leq \lambda \norm{u}_{L^2}\norm{v}_{L^2}\leq \lambda M^{-\half}\tau^{-1}\norm{u}_{X^{\half}_\zeta}\norm{v}_{X^{\half}_\zeta}
\end{equation} 
Next, to estimate the term $V^0 u$, we can split the potential as in the proof of lemma \ref{lemma:int}. Indeed, consider the set $F=\{x\in\Rd\,:\,|V^0(x)|>N\}$, and define $E=\Rd\setminus F$. Then, $V^0=\mathbbm{1}_E V_0+\mathbbm{1}_F V_0$, with $\norm{\mathbbm{1}_E V_0}_{L^\infty}=N$, and therefore by Cauchy-Schwartz and Hölder inequalities, as well as \eqref{eq:desx1} and \eqref{eq:desx2},
\begin{align}\label{eq:pert2}
    \begin{split}
        \lvert\braket{V^0\,u,v}\rvert&\leq N\norm{u}_{L^2}\norm{v}_{L^2}+\norm{\mathbbm{1}_F V_0}_{L^{d/2}}\norm{u}_{L^{p_d}}\norm{v}_{L^{p_d}}.\\
    &\leq (NM^{-\half}\tau^{-1}+M^\half\norm{\mathbbm{1}_F V_0}_{L^{d/2}})\,\norm{u}_{X^{\half}_\zeta}\norm{v}_{X^{\half}_\zeta}.
    \end{split}
\end{align}
For the term $\alpha\,\dd\sigma$, we need to use the Besov space version of Theorem 14.1.1 in \cite{hormanderlinear1963}, which gives us a trace boundedness $\norm{u}_{L^2(\Gamma)}\leq \norm{u}_{\dot B^\half_{2,1}}$. Remember that the Besov spaces $\dot B_{p,q}^s$ are given by the following norms, using Littlewood-Paley projectors as defined in \eqref{eq:LP}:
\[ \norm{u}_{\dot B_{p,q}^s}=\left(\sum_{k\in\mathbb{Z}}2^{kqs}\norm{P_k u}^q_{L^p}\right)^{1/q}. \] Indeed, by Cauchy-Schwartz and Hölder inequalities,
\begin{align*}
    \begin{split}
        \lvert\braket{\alpha\dd\sigma\,u,v}\rvert&\leq \norm{\alpha}_{L^\infty(\Gamma)}\norm{u}_{L^2(\Gamma)}\norm{v}_{L^2(\Gamma)}\leq \norm{\alpha}_{L^\infty(\Gamma)} \norm{u}_{\dot B^\half_{2,1}}\norm{v}_{\dot B^\half_{2,1}}
    \end{split}
\end{align*}
Now, estimate separately high and low frequencies. Let $k_\tau\in\mathbb{Z}$ be such that $2^{k_\tau-1}<\tau\leq 2^{k_\tau}$. Then, if $k>k_\tau+1$, we have that $2^{\hlf{k}}|\widehat{P_ku(\xi)}|\sim 2^{-\hlf{k}}|p_\zeta(\xi)|^\half|\widehat{P_ku(\xi)}|$, so that for the high frequencies we have that, by Plancherel's identity,
\begin{align*}
    \sum_{k>k_\tau+1}2^{\hlf{k}}\norm{P_ku}_{L^2}\sim\sum_{k>k_\tau+1}2^{-\hlf{k}}\norm{|p_\zeta|^\half\widehat{P_ku}}_{L^2}\leq \tau^{-\half}M^{1/4}\norm{u}_{X_\zeta^\half},
\end{align*}
while for the low frequencies 
\[ \sum_{k\leq k_\tau+1}2^{\hlf{k}}\norm{P_ku}_{L^2}\lesssim \tau^\half\norm{u}_{L^2}\leq M^{-1/4}\norm{u}_{X^\half_\zeta}. \] 
Therefore, combining the previous inequalities, we have that there exist a constant $C'>0$ such that
\begin{equation}\label{eq:pert3}
    \lvert\braket{\alpha\dd\sigma\,u,v}\rvert\leq C'\norm{\alpha}_{L^\infty(\Gamma)}(M^{-\half}+\tau^{-\half}+\tau^{-1}M^\half)\norm{u}_{X^{\half}_\zeta}\norm{v}_{X^{\half}_\zeta}.
\end{equation}
Finally, for $\gamma^s$, use proposition \ref{prop:vf} to obtain
\begin{align*}
    \lvert\braket{\gamma^s u,v}\rvert\leq C''\norm{g}_{L^\infty}\left(\norm{D^su}_{L^2}\norm{v}_{L^2}  + \norm{u}_{L^2}\norm{D^sv}_{L^2}\right).
\end{align*}
Split again in high and low frequencies:
\begin{align*}
    \norm{D^su}_{L^2}\leq&\int_{\Rd}|\xi|^{2s}\,|\widehat{u}(\xi)|^2\dd\xi=\int_{|\xi|<\tau}|\xi|^{2s}\,|\widehat{u}(\xi)|^2\dd\xi+\int_{|\xi|\geq\tau}|\xi|^{2s}\,|\widehat{u}(\xi)|^2\dd\xi.
\end{align*}
On the one hand, for the low frequencies, clearly
\[ \int_{|\xi|<\tau}|\xi|^{2s}\,|\widehat{u}(\xi)|^2\dd\xi\leq \,\tau^{2s}\,\norm{u}_{L^2}^2\leq\,\tau^{2s-1}M^{-\half}\,\norm{u}^2_{X_\zeta^\half}, \]
while for the high frequencies, having in mind that we assumed $s<1$,
\begin{align*}
    \int_{|\xi|\geq\tau}|\xi|^{2s}\,|\widehat{u}(\xi)|^2\dd\xi\leq& \int_{|\xi|\geq\tau}|\xi|^{2(s-1)}\,|p_\zeta(\xi)|\,\widehat{u}(\xi)\dd\xi\leq\tau^{2(s-1)} \,\norm{|p_\zeta|^\half\,\widehat{u}}^2_{L^2}\\\
    \leq& \,\tau^{2(s-1)}M^\half\, \norm{u}^2_{X_\zeta^\half}.
\end{align*}
Therefore, we have that
\[ \norm{D^su}_{L^2}\leq \left(\tau^{s-\half}M^{-1/4}+\tau^{s-1}M^{1/4}\right)\norm{u}_{X_\zeta^\half}, \]
which yields
\begin{equation}\label{eq:pert4}
    \lvert\braket{\gamma^s u,v}\rvert \leq C''\norm{g}_{L^\infty}\left(\tau^{s-1}M^{-\half}+\tau^{s-3/2}\right)\norm{u}_{X^{\half}_\zeta}\norm{v}_{X^{\half}_\zeta}.
\end{equation}
Now, sum inequalities \eqref{eq:pert1}, \eqref{eq:pert2}, \eqref{eq:pert3} and \eqref{eq:pert4}, and choose $M>CR_0^2$ so that $CR_0C'M^{-\half}\norm{\alpha}_{L^\infty(\Gamma)}\leq1/4$, then choose $N$ such that $CR_0M^{\half}\norm{\mathbbm{1}_FV^0}_{L^{d/2}}\leq1/4,$ and finally choose $\tau>8MR_0$ so that
\begin{align*}
    CR_0\Bigl[\left(\lambda+N\right)M^{-\half}\tau^{-1} &+ C'\norm{\alpha}_{L^\infty(\Gamma)}\left(\tau^{-\half}+\tau^{-1}M^\half\right) \\
    & +C''\norm{g}_{L^\infty}\left(\tau^{s-1}M^{-\half}+\tau^{-3/2}\right) \Bigr] <1/4.
\end{align*}
With this previous discussion we can prove what is summarized in the following lemma:
\carlemanen
Next, note that
\[ \Delta(e^{-\zeta\cdot x}u)=e^{-\zeta\cdot u}(\Delta-2\zeta\cdot\nabla+\zeta\cdot\zeta)\,u, \]
and thus lemma \ref{lem:carlemanest} yields
\[ \norm{u}_{X^\half_{-\zeta}}\leq C R_0\norm{e^{M\frac{(\centerdot\cdot\theta)^2}{2}}(\Delta-2\zeta\cdot\nabla+\zeta\cdot\zeta+\lambda-V)(e^{-M\frac{(\centerdot\cdot\theta)^2}{2}} u)}_{X_{-\zeta}^{-\half}}. \]
Now we procceed to remove the remaining exponential factors. Take $u=e^{M\frac{(\centerdot\cdot\theta)^2}{2}}v$ with $v\in\mathscr{S}(\Rd)$ supported in $B_{R_0}$, then
\begin{equation}\label{eq:jejejo}
    \norm{e^{M\frac{(\centerdot\cdot\theta)^2}{2}}v}_{X^\half_{-\zeta}}\leq C R_0\norm{e^{M\frac{(\centerdot\cdot\theta)^2}{2}}(\Delta-2\zeta\cdot\nabla+\zeta\cdot\zeta+\lambda-V)v}_{X_{-\zeta}^{-\half}}.
\end{equation}
Additionaly, if we prove that
\begin{equation}\label{eq:jeje1}
    \norm{e^{M\frac{(\centerdot\cdot\theta)^2}{2}}\chi w}_{X^\half_{\zeta}}\lesssim \norm{w}_{X^\half_{\zeta}},
\end{equation}
it will follow by duality that
\[ \norm{e^{M\frac{(\centerdot\cdot\theta)^2}{2}}v}_{X^\half_{-\zeta}}\lesssim\norm{(\Delta-2\zeta\cdot\nabla+\zeta\cdot\zeta+\lambda-V)v}_{X_{-\zeta}^{-\half}}, \]
while if 
\begin{equation}\label{eq:jeje2}
    \norm{e^{-M\frac{(\centerdot\cdot\theta)^2}{2}} w}_{X^\half_{\zeta}}\lesssim \norm{w}_{X^\half_{\zeta}},
\end{equation}
then again by duality \[ \norm{v}_{X_{-\zeta}^\half}\lesssim\norm{e^{M\frac{(\centerdot\cdot\theta)^2}{2}}v}_{X_{-\zeta}^\half}. \]
Above $\chi(x\cdot\theta)=\chi_0(x\cdot\theta/R)$ with $\chi_0$ a cutoff $C_0^\infty(\mathbb{R};[0,1])$ function such that $\chi_0(t)=1$ for $|t|\leq2$ and $\chi_0(t)=0$ for $|t|>4$.
Putting inequalities \eqref{eq:jejejo}, \eqref{eq:jeje1} and \eqref{eq:jeje2} we will obtain
\[ \norm{v}_{X_{-\zeta}^\half}\leq C' R_0\norm{(\Delta-2\zeta\cdot\nabla+\zeta\cdot\zeta+\lambda-V)v}_{X_{-\zeta}^{-\half}}, \]
with a new constant $C'>0$. To indeed prove \eqref{eq:jeje1} and \eqref{eq:jeje2} we draw next lemma from \cite{caroglobal2016}:
\begin{lemma}[\cite{caroglobal2016}, lemma 2.2]
    Let $f\in\mathscr{S}(\mathbb{R})$ be a function of the $x_n$ variable. If $u\in\mathscr{S}(\Rd)$ and $\tau>M>1,$ then
    \[ \norm{fu}_{X^\half_{\tau e_n}}\lesssim \norm{p\widehat{f}}_{L^1(\mathbb{R})}\norm{u}_{X^\half_{\tau e_n}}, \]
    where $p(\xi)=(M^{-1}|\xi|+1)^2$.
\end{lemma}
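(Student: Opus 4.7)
The plan is to exploit the one-dimensional structure of $f$ so that multiplication by $f$ becomes a one-dimensional convolution on the Fourier side in the $\xi_n$ variable, and then reduce the Bourgain-norm estimate to a pointwise comparison of the weight under translation in $\xi_n$.

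More concretely, I would first write out the convolution structure. Since $f$ depends only on the $x_n$ coordinate, its $d$-dimensional Fourier transform is a tempered distribution concentrated on the $\xi_n$-axis, so (up to a harmless constant)
\[
\widehat{fu}(\xi)=\int_{\mathbb{R}}\widehat{f}(\eta_n)\,\widehat{u}(\xi-\eta_n e_n)\,\mathrm{d}\eta_n.
\]
Denote the weight appearing in the $X_{\tau e_n}^{1/2}$ norm by
$w_\tau(\xi)=\bigl(M\tau^{2}+M^{-1}|p_{\tau e_n}(\xi)|^{2}\bigr)^{1/4}$. Then, by Minkowski's integral inequality and a translation in $\xi$,
\[
\norm{fu}_{X^{1/2}_{\tau e_n}}=\norm{w_\tau(\xi)\widehat{fu}(\xi)}_{L^{2}_{\xi}}
\lesssim \int_{\mathbb{R}}|\widehat{f}(\eta_n)|\,\norm{w_\tau(\xi+\eta_n e_n)\,\widehat{u}(\xi)}_{L^{2}_{\xi}}\,\mathrm{d}\eta_n.
\]

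The heart of the argument is then the pointwise weight inequality
\[
w_\tau(\xi+\eta_n e_n)\lesssim p(\eta_n)^{1/2}\,w_\tau(\xi),
\]
which, once plugged in, gives the claimed bound with $\|p\widehat{f}\|_{L^{1}}$ on the right. To verify it I would compute explicitly
\[
p_{\tau e_n}(\xi+\eta_n e_n)-p_{\tau e_n}(\xi)=-2\xi_n\eta_n-\eta_n^{2}+2i\tau\eta_n,
\]
and control each of the three terms. The key observation is that $|\Im p_{\tau e_n}(\xi)|=2\tau|\xi_n|$, which yields $2|\xi_n|\le|p_{\tau e_n}(\xi)|/\tau$; combined with $w_\tau(\xi)^{2}\ge M^{1/2}\tau$ and $w_\tau(\xi)^{2}\ge M^{-1/2}|p_{\tau e_n}(\xi)|$, one checks, using $\tau>M>1$, that all three error terms are absorbed into $(1+M^{-1}|\eta_n|+M^{-2}\eta_n^{2})\,w_\tau(\xi)^{2}$.

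The step I expect to require the most care is precisely this pointwise weight estimate: although conceptually it says only that a translation of size $|\eta_n|$ moves the weight by a polynomial factor in $M^{-1}|\eta_n|$, one has to be careful to exploit the imaginary part of $p_{\tau e_n}$ to tame the cross term $\xi_n\eta_n$, which otherwise cannot be absorbed since $|\xi_n|$ is not \emph{a priori} bounded by the weight alone. Once this is done, Minkowski and the change of variable complete the proof in a mechanical way, and the hypothesis $\tau>M>1$ is used exactly to pass from $(\tau^{-1},M^{-1/2}\tau^{-1},\ldots)$ type factors to factors of $M^{-1}$ that give $p(\eta_n)$ its stated form.
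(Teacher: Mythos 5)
The paper itself does not prove this lemma: it is imported verbatim as Lemma~2.2 of Caro--Rogers \cite{caroglobal2016}, with no argument reproduced here. So there is nothing in this manuscript to compare against; the relevant question is simply whether your standalone proof is sound.

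It is. The reduction to a one-dimensional convolution in $\xi_n$, the application of Minkowski's integral inequality followed by the translation $\xi\mapsto\xi-\eta_n e_n$, and the reduction to a pointwise weight comparison are exactly the right steps, and your pointwise estimate $w_\tau(\xi+\eta_n e_n)\lesssim p(\eta_n)^{1/2}w_\tau(\xi)$ does hold under $\tau>M>1$. To check it cleanly, work with $w_\tau^4$ rather than $w_\tau^2$: writing $P=p_{\tau e_n}(\xi)$ and $Q=P-2\xi_n\eta_n-\eta_n^2+2i\tau\eta_n$, using $2\tau|\xi_n|=|\Im P|\le|P|$ gives
\[
M^{-1}|Q|^2\lesssim M^{-1}|P|^2\bigl(1+\tfrac{|\eta_n|}{\tau}\bigr)^2+M^{-1}\eta_n^4+M^{-1}\tau^2\eta_n^2,
\]
and each term is then bounded by $p(\eta_n)^2\,w_\tau(\xi)^4$ once you invoke $\tau>M$ (for $|\eta_n|/\tau\le M^{-1}|\eta_n|$ and $M^3\le M\tau^2$) and $w_\tau(\xi)^4\ge M\tau^2$, $w_\tau(\xi)^4\ge M^{-1}|P|^2$. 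I would be cautious about your intermediate phrasing that the three terms of the increment are ``absorbed into $(1+M^{-1}|\eta_n|+M^{-2}\eta_n^2)\,w_\tau(\xi)^2$''; as a bound on $|Q-P|$ alone that line loses an $M^{1/2}$ on the cross and linear terms. What actually closes the argument is squaring $Q$ and carrying the prefactor $M^{-1}$ along, as above. With that done, your version in fact yields $\norm{p^{1/2}\widehat f}_{L^1}$, which is a slightly sharper constant than the stated $\norm{p\widehat f}_{L^1}$ but implies it since $p\ge1$.
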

As before, we can transform the inequality to make it work for an arbitrary $\zeta$ as we did above, and for a $f$ that depends on the variable $x\cdot\theta$, which can be represented as $Q^T_*f$, with $\theta=Q e_n$. On the one hand,
\[ \norm{Q^T_*fu}_{X^\half_{\tau \theta}}=\norm{fQ_*u}_{X^\half_{\tau e_n}}\lesssim\norm{p\widehat{f}}_{L^1(\mathbb{R})}\norm{Q_*u}_{X^\half_{\tau e_n}}=\norm{p\widehat{f}}_{L^1(\mathbb{R})}\norm{u}_{X^\half_{\tau\theta}}, \]
while if $\zeta=\tau\theta+i\Ii$, then
\[ \norm{fu}_{X^\half_{\zeta}}=\norm{e^{i\Ii\centerdot}fu}_{X^\half_{\tau \theta}}\lesssim\norm{p\widehat{f}}_{L^1(\mathbb{R})}\norm{e^{i\Ii\centerdot}u}_{X^\half_{\tau\theta}}=\norm{p\widehat{f}}_{L^1(\mathbb{R})}\norm{u}_{X^\half_{\zeta}}. \]
We summarize all this discussion in the following lemma:
\begin{lemma}\label{lem:apriori}
   Take $R_0>0$ such that $\supp V\subset B_{R_0}$. There exists $C>0$ and $\tau_0=\tau_0(R_0,V,\lambda)$ such that 
\[ \norm{u}_{X_{-\zeta}^\half}\leq C R_0\norm{(\Delta-2\zeta\cdot\nabla+\zeta\cdot\zeta+\lambda-V) u}_{X_{-\zeta}^{-\half}} \]
for all for $u\in\mathscr{S}(\Rd)$ with $\supp u\subset B_{R_0}$ and $\tau > \tau_0$.
\end{lemma}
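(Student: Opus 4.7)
The plan is to assemble the ingredients prepared in the extensive discussion immediately preceding the statement. Lemma \ref{lem:carlemanest} already supplies the Carleman estimate carrying the exponential weight $\varphi_\zeta(x)=M(x\cdot\theta)^2/2+x\cdot\zeta$, so the remaining task is to peel off those weights and expose a clean a priori estimate for the conjugated operator $\Delta-2\zeta\cdot\nabla+\zeta\cdot\zeta+\lambda-V$.

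First I would exploit the elementary identity $\Delta(e^{-\zeta\cdot x}u)=e^{-\zeta\cdot x}(\Delta-2\zeta\cdot\nabla+\zeta\cdot\zeta)u$ on the right-hand side of Lemma \ref{lem:carlemanest}, which absorbs the linear exponential $e^{\pm\zeta\cdot x}$ into the differential operator while leaving behind the genuinely troublesome Gaussian factor $e^{\pm M(\centerdot\cdot\theta)^2/2}$. Substituting $u=e^{M(\centerdot\cdot\theta)^2/2}v$ for $v\in\mathscr{S}(\Rd)$ supported in $B_{R_0}$ then rearranges the inequality into
\[
\norm{e^{M\frac{(\centerdot\cdot\theta)^2}{2}}v}_{X^\half_{-\zeta}} \leq C R_0\norm{e^{M\frac{(\centerdot\cdot\theta)^2}{2}}(\Delta-2\zeta\cdot\nabla+\zeta\cdot\zeta+\lambda-V)v}_{X_{-\zeta}^{-\half}},
\]
and the problem reduces to replacing the weighted norms on both sides by ordinary norms of $v$ and of its image under the conjugated operator.

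This replacement is carried out via the two multiplicative estimates \eqref{eq:jeje1} and \eqref{eq:jeje2}. Dualizing \eqref{eq:jeje2} gives $\norm{v}_{X_{-\zeta}^\half}\lesssim \norm{e^{M(\centerdot\cdot\theta)^2/2}v}_{X_{-\zeta}^\half}$, which clears the left-hand side. For the right, since $(\Delta-2\zeta\cdot\nabla+\zeta\cdot\zeta+\lambda-V)v$ is still supported in $B_{R_0}$, the factor $e^{M(\centerdot\cdot\theta)^2/2}$ acting on it agrees with $e^{M(\centerdot\cdot\theta)^2/2}\chi$ for the smooth cutoff $\chi(x)=\chi_0(x\cdot\theta/R)$; applying \eqref{eq:jeje1} by duality then yields $\norm{e^{M(\centerdot\cdot\theta)^2/2}\chi f}_{X_{-\zeta}^{-\half}}\lesssim \norm{f}_{X_{-\zeta}^{-\half}}$, clearing the right-hand side and producing the desired inequality with a new absolute constant $C'$.

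Both multiplicative estimates reduce, after the rotation $\theta\mapsto e_n$ and the translation that absorbs the imaginary part $\Ii$ of $\zeta$ already discussed in the excerpt, to the Caro--Rogers one-variable multiplier lemma applied respectively to $f(t)=e^{-Mt^2/2}$ and $f(t)=\chi_0(t/R)e^{Mt^2/2}$. The first is a Gaussian, and the second is compactly supported and smooth; in either case $\widehat{f}$ is Schwartz and $\norm{p\widehat{f}}_{L^1(\mathbb{R})}<\infty$ with $p(\xi)=(M^{-1}|\xi|+1)^2$ is immediate. I expect the only delicate bookkeeping step to be the support-driven replacement of $e^{M(\centerdot\cdot\theta)^2/2}$ by $e^{M(\centerdot\cdot\theta)^2/2}\chi$ on the right-hand side, since the bare Gaussian is not itself an admissible multiplier; the compact support of $v$, propagated by the differential operator, is exactly what makes this step — and hence the whole reduction — legitimate.
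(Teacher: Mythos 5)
Your proposal reproduces the paper's argument step for step: conjugate the Carleman estimate of Lemma~\ref{lem:carlemanest} via $\Delta(e^{-\zeta\cdot x}u)=e^{-\zeta\cdot x}(\Delta-2\zeta\cdot\nabla+\zeta\cdot\zeta)u$, substitute $u=e^{M(\centerdot\cdot\theta)^2/2}v$, and strip the Gaussian weights using the multiplicative estimates \eqref{eq:jeje1} and \eqref{eq:jeje2}, which themselves reduce (after rotation and translation) to the Caro--Rogers one-variable multiplier lemma applied to $e^{-Mt^2/2}$ and to $\chi_0(t/R)e^{Mt^2/2}$. You also correctly identify the key subtlety, namely that the bare Gaussian $e^{M(\centerdot\cdot\theta)^2/2}$ is not an admissible multiplier and must be replaced on the right-hand side by its compactly supported cutoff version, which is legitimate precisely because $(\Delta-2\zeta\cdot\nabla+\zeta\cdot\zeta+\lambda-V)v$ inherits the support of $v$. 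This is the same approach as the paper.
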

To prove the existence of CGO solutions we should introduce a couple of natural spaces. Let $\D$ be a bounded open domain of class $\C^2$ such that $\supp V_j\subset \D$, $j=1,2$, and, for $\zeta\in\mathbb{C}^d$ and $s>0$ define
\[ X_\zeta^s(\D)=\{\restr{u}{\D}\,:\,u\in X_\zeta^s\}, \]
endowed with the norm
\[ \norm{u}_{X_\zeta^s(\D)}=\inf\{\norm{v}_{X_\zeta^s}\,:\,\restr{v}{\D}=u\}, \]
and 
\[ X_{\zeta,c}^s(\D)=\{u\in X^s_\zeta\,:\,\supp\,u\subset \overline{\D}\}, \]
endowed as well with the norm $\norm{\centerdot}_{X_\zeta^s(\D)}$. The space $X_{-\zeta,c}^{-s}(\D)$ is defined as the dual space of $X_\zeta^s(\D)$, and it is easy to check that it can be identified with the space of distributions in $X_{-\zeta}^{-s}$ whose support lays inside $\D$. In this context, the a priori estimate in lemma \ref{lem:apriori} works in the space $X_{-\zeta,c}^{-\half}(\D)$, so that the solutions can be constructed in $X_\zeta^\half(\D)$, as stated in the following proposition.
\begin{prop}\label{prop:cgo}
    Consider $d\geq 3$, let $\D$ be a bounded open domain of class $\C^2$ amd let $R_0>0$ such that $\supp V\subset\D\subset B_{R_0}$. There exist a constant $\tau_0=\tau_0(R_0,V,\lambda)$ such that, for every $\tau\geq\tau_0$, and every $\zeta=\Re(\zeta)+i\Im(\zeta)\in\mathbb{C}^d$ such that $|\Re(\zeta)|=\tau$, $|\Im(\zeta)|=(\tau^2+\lambda)^\half$ and $\Re(\zeta)\cdot\Im(\zeta)=0$, there exist $w_\zeta\in X_\zeta^\half(\D)$ so that $v_\zeta=e^{\zeta\cdot x}(1+w_\zeta)$ is a solution to the equation $(\Delta+\lambda-V)v_\zeta=0$ in $\D$ and
    \begin{equation}\label{eq:descgo}
        \norm{w_\zeta}_{X_\zeta^\half(\D)}\lesssim \norm{V}_{X_\zeta^{-\half}}.
    \end{equation}
\end{prop}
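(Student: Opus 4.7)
The plan is to construct the correction term $w_\zeta$ by inverting the conjugated Helmholtz equation via a Hahn--Banach duality argument, driven by the a priori estimate of Lemma \ref{lem:apriori}. First, I would observe that the conditions on $\zeta$ force
\[ \zeta\cdot\zeta = |\Re(\zeta)|^2 - |\Im(\zeta)|^2 + 2i\,\Re(\zeta)\cdot\Im(\zeta) = -\lambda. \]
Plugging the ansatz $v_\zeta = e^{\zeta\cdot x}(1+w_\zeta)$ into $(\Delta+\lambda-V)v_\zeta=0$ and cancelling $e^{\zeta\cdot x}$, the problem reduces to solving, in the weak sense on $\D$,
\[ (\Delta + 2\zeta\cdot\nabla - V)\, w_\zeta = V. \]

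Next, I would work with the formal adjoint $L_\zeta := \Delta - 2\zeta\cdot\nabla - V$, which under $\zeta\cdot\zeta=-\lambda$ is exactly the operator controlled by Lemma \ref{lem:apriori}. Since $\D\subset B_{R_0}$, a standard density argument extends the estimate
\[ \norm{u}_{X_{-\zeta}^\half} \leq CR_0\,\norm{L_\zeta u}_{X_{-\zeta}^{-\half}} \]
from Schwartz functions supported in $B_{R_0}$ to all $u\in X_{-\zeta,c}^\half(\D)$. In particular, $L_\zeta: X_{-\zeta,c}^\half(\D) \to X_{-\zeta,c}^{-\half}(\D)$ becomes injective with closed range and a quantitative lower bound.

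Then, on the range $\mathcal{R}(L_\zeta)\subset X_{-\zeta,c}^{-\half}(\D)$ I would define the linear functional $\ell(L_\zeta u) := \braket{V,u}$, which is well-defined by injectivity. Combining the $X_\zeta^{-\half}$--$X_{-\zeta}^\half$ duality pairing with the estimate above yields
\[ |\ell(L_\zeta u)| \leq \norm{V}_{X_\zeta^{-\half}}\norm{u}_{X_{-\zeta}^\half}\leq CR_0\,\norm{V}_{X_\zeta^{-\half}}\,\norm{L_\zeta u}_{X_{-\zeta}^{-\half}}, \]
so $\ell$ is bounded. Hahn--Banach extends it to the whole $X_{-\zeta,c}^{-\half}(\D)$ with the same norm, and since this space is the dual of $X_\zeta^\half(\D)$ by construction, the extension is represented by some $w_\zeta\in X_\zeta^\half(\D)$ satisfying $\norm{w_\zeta}_{X_\zeta^\half(\D)}\lesssim R_0\norm{V}_{X_\zeta^{-\half}}$. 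The identity $\braket{w_\zeta,L_\zeta u}=\braket{V,u}$ for every $u\in X_{-\zeta,c}^\half(\D)$ is exactly the distributional statement $(\Delta+2\zeta\cdot\nabla-V)w_\zeta = V$ on $\D$, so that $v_\zeta = e^{\zeta\cdot x}(1+w_\zeta)$ is the desired CGO solution.

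The main delicate point I expect is the density extension of Lemma \ref{lem:apriori} to $X_{-\zeta,c}^\half(\D)$, together with making sure the pairing $\braket{V,u}$ genuinely obeys the $X_\zeta^{-\half}$--$X_{-\zeta}^\half$ duality bound on such rough test functions. This amounts to re-running, for the singular components $\gamma^s$ and $\alpha\dd\sigma$ of $V$, the same perturbation estimates that were used in the passage from Theorem \ref{thm:cgo} to Lemma \ref{lem:carlemanest}, ensuring in particular $V\in X_\zeta^{-\half}$ with a quantitative bound uniform in $\zeta$. Once that bookkeeping is in place, the Hahn--Banach/closed-range construction above is standard.
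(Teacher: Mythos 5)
Your proposal is correct and takes exactly the route the paper intends (and which it leaves implicit, citing \cite{caroscattering2020} and \cite{caroglobal2016}): plug in the ansatz, pass to the formal adjoint $L_\zeta=\Delta-2\zeta\cdot\nabla-V$, use the a priori estimate of Lemma \ref{lem:apriori} to get a quantitative injectivity bound for $L_\zeta$ on compactly supported elements of $X_{-\zeta}^{\half}$, and then solve by Hahn--Banach/duality, reading off the bound $\norm{w_\zeta}_{X_\zeta^{\half}(\D)}\lesssim R_0\norm{V}_{X_\zeta^{-\half}}$ directly from the construction. The points you flag as delicate are the right ones and are in fact already handled in the text: the identification of $X_{-\zeta,c}^{-\half}(\D)$ as the dual of $X_\zeta^{\half}(\D)$ together with the Hilbert-space (hence reflexive) nature of these weighted $L^2$-type spaces justifies the representation of your functional by an element $w_\zeta\in X_\zeta^{\half}(\D)$; the density step from Schwartz functions to $H^1$ functions supported in $\overline{\D}$ is discussed right after Lemma \ref{lem:carlemanest} using the set-equality $X_\zeta^{\half}=H^1(\Rd)$; and the boundedness of multiplication by $V$ from $X_{\zeta_1}^{\half}$ into $X_{\zeta_2}^{-\half}$ (needed both to make sense of $L_\zeta u$ and of the pairing $\braket{V,u}$) is exactly the content of the perturbation estimates \eqref{eq:pert1}--\eqref{eq:pert4} and is re-invoked explicitly in Section \ref{sect:inversethm}.
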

        \subsection{Proof of theorem \ref{thm:1.2}}\label{sect:inversethm}
        With proposition \ref{prop:cgo}, we can construct the kind of special solutions that we are looking for, with $\zeta_j\cdot\zeta_j=-\lambda$. Besides, we need $\zeta_j$ to satisfy that $\zeta_1+\zeta_2=-i\kappa$ for an arbitrary given $\kappa\in\mathbb{R}^d$. We can explicitly construct these $\zeta_j$ by choosing $\eta,\theta\in\mathbb{S}^{d-1}$ such that $\eta\cdot\theta=\eta\cdot\kappa=\theta\cdot\kappa=0$. Now, for $\tau\geq\frac{|\kappa|^2}{4}-\lambda$ we can set
\begin{align}\label{eq:zeta}
    \begin{split}
    \zeta_1&=\tau\theta+i\left[-\frac{\kappa}{2}+(\tau^2+\lambda-\frac{|\kappa|^2}{4})^\half\eta\right],\\
    \zeta_2&=-\tau\theta+i\left[-\frac{\kappa}{2}-(\tau^2+\lambda-\frac{|\kappa|^2}{4})^\half\eta\right],
    \end{split}
\end{align}
which satisfy both $\zeta_j\cdot\zeta_j=-\lambda$ and $\zeta_1+\zeta_2=-i\kappa$. Then, if we take $\tau\geq \max\{\tau_0,(\frac{|\kappa|^2}{4}-\lambda)^\half\}$, these $\zeta_j$ satisfy the conditions of proposition \ref{prop:cgo}. Let then $V_1$ and $V_2$ be two potentials of the form \eqref{V}, and let $v_j$ be the CGO solutions corresponding to $\zeta_j$ and $V_j$, $j=1,2$. If we consider any extension of $w_j\in X_{\zeta_j}^\half(\D)$ to $X_{\zeta_j}^\half$, this extension will be in $H^1(\Rd)$. Then, $w_j$ belongs to $H^1(\D)$ and so does $v_j$. Therefore, we can apply proposition \ref{proportho} and, plugging $v_1$ and $v_2$ in the orthogonality relation \[ \braket{(V_1-V_2)v_1,v_2}=0, \]
we get
\begin{align}\label{eq:ales}
    \begin{split}
        \braket{V_1-V_2,e^{-{i\kappa\cdot x}}}=&-\braket{V_1-V_2,e^{-i\kappa\cdot x}w_1}-\braket{V_1-V_2,e^{-i\kappa\cdot x}w_2}\\
        &-\braket{(V_1-V_2)w_1,e^{-i\kappa\cdot x}w_2}. 
    \end{split}
\end{align}
We would like this terms to vanish. For the first two terms on the right hand side we have that, by duality,
\begin{align}\label{eq:ales1}
    \begin{split}
    \lvert\braket{V_1-V_2,e^{-i\kappa\cdot x}w_j}\rvert&\leq\norm{V_1-V_2}_{X_{\zeta_j,c}^{-\half}(\D)}\norm{e^{-i\kappa\cdot x}w_j}_{X_{\zeta_j}^\half(\D)}\\
    &\lesssim (1+|\kappa|)\, \norm{V_1-V_2}_{X_{\zeta_j}^{-\half}}\norm{V_j}_{X_{\zeta_j}^{-\half}},
    \end{split}
\end{align}
where in the last inequality we have used that $\supp(V_1-V_2)\subset \D$, inequality \eqref{eq:descgo} and the following:
\begin{equation}\label{eq:kappa}
    \norm{e^{-i\kappa\cdot x}w_j}_{X_{\zeta_j}^\half(\D)}\lesssim (1+|\kappa|)\,\norm{w_j}_{X_{\zeta_j}^\half(\D)}.
\end{equation}
On the other hand, the third term can be bounded again by duality as
\begin{align*}
    \lvert\braket{(V_1-V_2)w_1,e^{-i\kappa\cdot x}w_2}\rvert&\leq \norm{(V_1-V_2)w_1}_{X_{\zeta_2,c}^{-\half}(\D)}\norm{e^{-i\kappa\cdot x}w_2}_{X_{\zeta_2}^\half(\D)}\\
    &\lesssim (1+|\kappa|)\, \norm{(V_1-V_2)w_1}_{X_{\zeta_2}^{-\half}}\norm{V_2}_{X_{\zeta_2}^{-\half}}
\end{align*}
where we have used again \eqref{eq:descgo}, \eqref{eq:kappa} and the fact that $\supp(V_1-V_2)w_1\subset \D$. Now, to keep estimating this term, we need boundedness of the operator multiplication by $V_1-V_2$ from $X_{\zeta_1}^\half(\D)$ to $X_{\zeta_2}^{-\half}$. In fact, let $V$ be a potential of the form \eqref{V} and $w\in X_{\zeta_1}^\half$. Proceeding just as in section \ref{sect:carleman}, we obtain that, for $\phi\in X_{\zeta_2}^\half$,
\[ \lvert\braket{V\,w,\phi}\rvert\lesssim (\norm{V^0}_{L^{\hlf{d}}}+\norm{\alpha}_{L^\infty(\Gamma)}+\norm{g}_{L^\infty})\,\norm{u}_{X_{\zeta_1}^\half}\norm{\phi}_{X_{\zeta_2}^\half}, \]
where the implicit constant depends only on $V$ and $d$, and $u\in X^\half_\zeta$ is an arbitrary extension of $w$ to $\Rd$. Taking the infimum over the norm of $u$ gives us the desired boundedness, i.e. \[ \norm{(V_1-V_2)w_1}_{X_{\zeta_2}^{-\half}}\lesssim\norm{w_1}_{X_{\zeta_1}^\half(\D)}, \] to estimate the last summand in \eqref{eq:ales} as
\begin{align}\label{eq:ales2}
    \begin{split}
    \lvert\braket{(V_1-V_2)w_1,e^{-i\kappa\cdot x}w_2}\rvert&\lesssim (1+|\kappa|)\, \norm{w_1}_{X_{\zeta_1}^\half(\D)}\norm{V_2}_{X_{\zeta_2}^{-\half}}\\
    &\lesssim (1+|\kappa|)\, \norm{V_1}_{X_{\zeta_1}^{-\half}}\norm{V_2}_{X_{\zeta_2}^{-\half}},
    \end{split}
\end{align}
where again we have used \eqref{eq:descgo}. Therefore, adding inequalities \eqref{eq:ales1} and \eqref{eq:ales2} gives
\begin{equation}\label{eq:invfou}
    \lvert\braket{V_1-V_2,e^{-{i\kappa\cdot x}}}\rvert\lesssim(1+|\kappa|)\,\sum_{j,k=1}^2\norm{V_j}_{X_{\zeta_k}^{-\half}}\,\sum_{l,m=1}^2\norm{V_l}_{X_{\zeta_m}^{-\half}}
\end{equation}
We now want to show that the right-hand side of \eqref{eq:invfou} goes to zero in some sense as $\tau$ grows. This can be done in average, as showed by Haberman and Tataru in \cite{habermanuniqueness2013}. We state it in the following lemma, almost identical to lemma 3.5 in \cite{caroscattering2020}, with the only addition of the new component of the potential $\gamma^s$. We repeat the proof here just for the sake of completeness.
\begin{lemma}\label{lem:habibi}
    Let $V$ be a potential of the form \eqref{V} and fix $\nu\in\S^{d-1}$ and $r>0$. For $\zeta\in\mathbb{C}^d$ of the form \eqref{eq:zeta}, denote $\zeta=\zeta(\tau,T)$ with $\kappa=rT\nu$ for some $T\in SO(d)$. Then for every $s\in (1/2,1)$ and $M>1$, it holds that
    \[ \frac{1}{M}\int_M^{2M}\int_{SO(d)}\norm{V}_{X^{-1/2}_{\zeta(\tau,T)}}^2\dd\mu(T)\dd\tau\lesssim M^{-\omega}\norm{V_0}^2_{L^{d/2}}+M^{-2(1-s)}\left(\norm{\alpha}^2_{L^\infty(\Gamma)}+\norm{g}_{L^\infty}^2\right), \]
    where     $$\omega=\begin{cases}1/2 & d=3\\
         1/4 & d\geq 4,\end{cases}$$ the implicit constant depends on $V$ and $d$, and $\mu$ denotes the Haar measure on $SO(d)$.
\end{lemma}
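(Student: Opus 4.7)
The plan is to decompose $V=V^0+\alpha\dd\sigma+\gamma^s$ by the triangle inequality and bound each summand separately in the averaged $X^{-1/2}_\zeta$ norm. The first two pieces are treated exactly as in Lemma 3.5 of \cite{caroscattering2020}, so the bulk of the new work is the $\gamma^s$ contribution. For $V^0$, the classical averaged estimate of Haberman and Tataru \cite{habermanuniqueness2013} applies: one unfolds
\[
    \|V^0\|^2_{X^{-1/2}_\zeta}=\int\frac{|\widehat{V^0}(\xi)|^2}{M\tau^2+M^{-1}|p_\zeta(\xi)|^2}\dd\xi
\]
and uses that the combined $\tau\in[M,2M]$ and $T\in SO(d)$ averaging sweeps the zero set of $p_\zeta$ through Fourier space; a Stein--Tomas--Sogge restriction bound for $L^{d/2}$ inputs then yields $M^{-\omega}\|V^0\|^2_{L^{d/2}}$, with $\omega=1/2$ when $d=3$ and $\omega=1/4$ when $d\geq 4$, reflecting the two relevant ranges of the Stein--Tomas exponent. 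For $\alpha\dd\sigma$, the argument is analogous: apply the trace estimate $\|\phi\|_{L^2(\Gamma)}\lesssim\|\phi\|_{H^s(\D)}$ for $s\in(1/2,1)$ together with the $\tau,M$-dependent bounds on $\|\phi\|_{H^s}$ from Section~\ref{sect:carleman} to obtain the $M^{-2(1-s)}\|\alpha\|^2_{L^\infty(\Gamma)}$ bound.

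For the new piece $\gamma^s=\chi^2 D^sg$ I would work by duality: for $\phi\in\mathscr{S}(\Rd)$, the self-adjointness of the Riesz derivative gives
\[
    \braket{\gamma^s,\phi}=\braket{\chi^2 D^sg,\phi}=\braket{g,D^s(\chi^2\phi)}\leq\|g\|_{L^\infty}\|D^s(\chi^2\phi)\|_{L^1}.
\]
The fractional Leibniz rule \eqref{eq:kato} with $r=1$ and $p_1=q_1=p_2=q_2=2$, combined with the smoothness and compact support of $\chi$, yields $\|D^s(\chi^2\phi)\|_{L^1}\lesssim\|\phi\|_{L^2}+\|D^s\phi\|_{L^2}$. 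The first summand is controlled via \eqref{eq:desx1}, and the second by the same high/low-frequency split at scale $|\xi|\sim\tau$ that leads to \eqref{eq:pert4}. This produces a bound of the form
\[
    \|\gamma^s\|_{X^{-1/2}_\zeta}\lesssim\|g\|_{L^\infty}\bigl(\tau^{-1/2}M^{-1/4}+\tau^{s-1/2}M^{-1/4}+\tau^{s-1}M^{1/4}\bigr),
\]
and squaring, integrating over $\tau\in[M,2M]$, and identifying the Bourgain parameter $M$ with the averaging scale delivers the claimed $M^{-2(1-s)}\|g\|^2_{L^\infty}$ contribution after collecting the dominant terms.

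The main obstacle is the careful tracking of the interplay between $M$ and $\tau$: the $V^0$ piece needs the combined $\tau$- and $T$-averaging plus Stein--Tomas to gain the exponent $\omega$, while for $\alpha\dd\sigma$ and $\gamma^s$ one must balance the three mixed $(M,\tau)$-powers arising from the high/low frequency split so that they collapse to the single decay rate $M^{-2(1-s)}$. The rotation average over $SO(d)$ is central only for the $V^0$ term through the restriction estimate; for $\alpha\dd\sigma$ and $\gamma^s$ it plays essentially no role beyond matching the integration setup inherited from \cite{caroscattering2020,habermanuniqueness2013}.
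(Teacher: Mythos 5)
There is a genuine gap, and it lies exactly where you placed the least scrutiny: your treatment of $\alpha\dd\sigma$ and $\gamma^s$. You claim that the $SO(d)$-average is ``central only for the $V^0$ term'' and that for $\alpha\dd\sigma$ and $\gamma^s$ one can recycle the pointwise-in-$(\tau,M)$ bounds from Section~\ref{sect:carleman}. This is backwards with respect to the paper's proof, and — more importantly — your proposed pointwise bound cannot produce the claimed decay. Take your own estimate
\[
    \|\gamma^s\|_{X^{-1/2}_\zeta}\lesssim\|g\|_{L^\infty}\bigl(\tau^{-1/2}M^{-1/4}+\tau^{s-1/2}M^{-1/4}+\tau^{s-1}M^{1/4}\bigr),
\]
square it, and average over $\tau\in[M_{\mathrm{avg}},2M_{\mathrm{avg}}]$ with the Bourgain parameter $M$ held fixed (as the Carleman estimate requires, since there $\tau>8MR_0$ with $M>CR_0^2$): the middle term gives $\tau^{2s-1}M^{-1/2}\sim M_{\mathrm{avg}}^{2s-1}$, which \emph{grows} as $M_{\mathrm{avg}}\to\infty$ for every $s>1/2$. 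Even if you illegitimately set $M=\tau$, the dominant term becomes $\tau^{2s-3/2}$, which differs from the target $\tau^{-2(1-s)}=\tau^{2s-2}$ by a full power of $\tau^{1/2}$. The high/low split in Section~\ref{sect:carleman} is calibrated to make the perturbation $\lambda-V$ \emph{small relative to a fixed Carleman constant} for $\tau$ large; it is not an averaged smallness statement in the scale $M_{\mathrm{avg}}$, and it cannot be promoted to one by duality.

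The ingredient you are missing is Haberman's averaged lemma (Lemma~5.2 in \cite{habermanuniqueness2015}): for $f\in\dot H^{-1}$,
\[
    \frac{1}{M}\int_M^{2M}\int_{SO(d)}\|f\|_{X^{-1/2}_{\zeta(\tau,T)}}^2\dd\mu(T)\dd\tau\lesssim M^{-1}\|Lf\|^2_{\dot H^{-1/2}}+\|Hf\|^2_{\dot H^{-1}},
\]
where $L$ and $H$ are the low/high frequency cut-offs at $|\xi|=2M$. The rotation average is what makes the characteristic surface sweep through frequency space so that $\widehat f$ cannot stay stuck near $\{p_\zeta=0\}$; this is exactly the mechanism you attributed (incorrectly) to the $V^0$ piece alone. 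Interpolating between the low and high frequency regimes gives $\lesssim M^{-2(1-s)}\|f\|^2_{\dot H^{-s}}$ for $1/2<s<1$, and then the only remaining work — which your fractional Leibniz and trace calculations do cover — is to show $\|\alpha\dd\sigma\|_{\dot H^{-s}}\lesssim\|\alpha\|_{L^\infty(\Gamma)}$ and $\|\gamma^s\|_{\dot H^{-s}}\lesssim\|g\|_{L^\infty}$. Conversely, the $V^0$ piece in the paper is handled by purely pointwise-in-$(\tau,T)$ bounds: $\|V^0\|_{X^{-1/2}_\zeta}\leq\tau^{-1/2}\|V^0\|_{L^2}$ via Plancherel when $d\geq 4$, and via the dual Haberman/Kenig–Ruiz–Sogge embedding when $d=3$, with the $\tau$ and $T$ integrals trivial. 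Your restriction-plus-averaging route for $V^0$ might be made to work, but it is not needed and it is not what produces the claimed $M^{-2(1-s)}$ rate for the other two pieces.
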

\begin{proof}
    First, for the critically singular part, if $d\geq 4$,
    \[ \norm{V^0}_{X^{-\half}_{\zeta(\tau,T)}}\leq\tau^{-\half}\norm{V^0}_{L^2}\lesssim\tau^{-\half}\norm{V^0}_{L^{d/2}}, \]
    since $V^0$ is compactly supported and $d/2\geq 2$ for $d\geq4$. In the case $d=3$, by the dual inequality to Haberman's embedding (see corollaries 4.8 and 4.22 in \cite{caroscattering2020}),
    \[ \norm{V^0}_{X^{-\half}_{\zeta(\tau,T)}}\leq\tau^{-d(1/p'_d-1/q'_d)}\norm{V^0}_{L^{q'_d}}\lesssim\tau^{-1/4}\norm{V^0}_{L^{d/2}}, \]
    since $d/2\geq q'_d$ for $d=3$. For the remaining components, use lemma 5.2 in \cite{habermanuniqueness2015}, which states that for $f\in \dot H^{-1}$, it holds that
    \[ \frac{1}{M}\int_M^{2M}\int_{SO(d)}\norm{f}_{X^{-1/2}_{\zeta(\tau,T)}}^2\dd\mu(T)\dd\tau\lesssim M^{-1}\norm{Lf}^2_{\dot H^{-\half}}+\norm{Hf}^2_{\dot H^{-1}}, \]
    where $\widehat{Lf}=\mathbbm{1}_{|\xi|<2M}\widehat{f}$ and $Hf=f-Lf$ stand the low and high frequencies of $f$, respectively. Now, for every $1/2<s<1$, we obtain
    \[ \frac{1}{M}\int_M^{2M}\int_{SO(d)}\norm{f}_{X^{-1/2}_{\zeta(\tau,T)}}^2\dd\mu(T)\dd\tau\lesssim M^{-2(1-s)}\norm{f}^2_{\dot H^{-s}}. \]
    On the one hand, since $\supp (\alpha\dd\sigma)\subset \D$,
    \[ \norm{\alpha\dd\sigma}^2_{\dot H^{-s}}\lesssim\norm{\alpha\dd\sigma}^2_{H^{-s}}\lesssim \norm{\alpha}_{L^2(\Gamma)}\lesssim\norm{\alpha}_{L^\infty(\Gamma)}, \]
    where we have used the dual inequality of the usual trace theorem for Sobolev spaces, as well as the fact that $\Gamma$ has finite measure. On the other hand, since $\gamma^s=\chi^2D^s\,g$ with $\supp \chi\subset \D$, it follows that for $u\in \mathcal{S}(\Rd)$,
    \[ \lvert\braket{\gamma^s,u}\rvert\leq \norm{g}_{L^\infty(\Rd)}\norm{D^s(\chi^2 u)}_{L^{2}}\lesssim \norm{g}_{L^\infty(\Rd)}\norm{u}_{\dot H^s}, \]
    where the last inequality comes by proceededing as in \eqref{eq:chi}. Therefore,
    \[ \norm{\gamma^s}_{\dot H^{-s}}\lesssim \norm{g}_{L^\infty(\Rd)}, \]
    which ends the proof of the lemma.  
\end{proof}
To end the proof of Theorem \ref{thm:1.2}, we use an argument that Haberman \cite{habermanuniqueness2015} attributes to Nguyen and Spirn \cite{nguyen}. Indeed, if we fix $r>0$ and $\nu\in\mathbb{S}^{d-1}$ and denote $\zeta=\zeta(\tau,T)$ as in lemma \ref{lem:habibi} above, we have that
\[ \lim_{M\to\infty}\frac{1}{M}\int_M^{2M}\int_{SO(d)}\norm{V}_{X^{-1/2}_{\zeta(\tau,T)}}^2\dd\mu(T)\dd\tau=0, \]
and, for any $\varepsilon>0$, by simply restricting to $B_\varepsilon=\{T\in SO(d):\norm{T-I}\leq\varepsilon\}$ with $I$ the identity map, we get that
\[ \lim_{M\to\infty}\frac{1}{M\mu(B_\varepsilon)}\int_M^{2M}\int_{B_\epsilon}\norm{V}_{X^{-1/2}_{\zeta(\tau,T)}}^2\dd\mu(T)\dd\tau=0. \]
Now, take a sequence $M=M_n$ such that $M_n\to\infty$ as $n\to\infty$. Since the quantity in the limit above is an average, we may choose sequences $\tau_{\varepsilon,n}$, $T_{\varepsilon,n}$ and $\delta_{\varepsilon,n}>0$ such that
\[ \norm{V}_{X^{-1/2}_{\zeta(\tau_{\varepsilon,n},T_{\varepsilon,n})}}\leq\delta_{\varepsilon,n}, \]
and such that $\delta_{\varepsilon,n}\to0$ as $n\to\infty$. Therefore, going back to \eqref{eq:invfou}, we obtain
\[ |\widehat{V_1-V_2}(\kappa_{\varepsilon,n})|=\lvert\braket{V_1-V_2,e^{-{i\kappa_{\varepsilon,n}\cdot x}}}\rvert\lesssim \delta_{\varepsilon,n}^2, \]
where $\kappa_{\varepsilon,n}=rT_{\varepsilon,n}\nu$. Since $B_\varepsilon$ is compact, there exists a subsequence $T_{\varepsilon,n_m}$ converging to some $T_\varepsilon$. Thus,
\[ \lim_{m\to\infty}|\widehat{V_1-V_2}(\kappa_{\varepsilon,n_m})|\lesssim \lim_{m\to\infty} \delta_{\varepsilon,n_m}^2=0, \]
and, since $V_j$ are compactly supported, $\widehat{V_1-V_2}$ is continuous, which means that
\[ \widehat{V_1-V_2}(rT_\varepsilon \nu)=0. \]
Finally, since necessarily $T_\varepsilon\to I$ as $\varepsilon\to0$, we conclude that
\[ \widehat{V_1-V_2}(r\nu)=0 \] for any $r>0$ and $\nu\in\S^{d-1}$ and, by Fourier inversion we obtain that $V_1=V_2$, which ends the proof of Theorem \ref{thm:1.2}.

\appendix

\section{Solution of the Neumann problem}\label{ap:neumann}

Along this section, we aim to prove the following result, which is used in the proof of the Runge approximation, proposition \ref{prop:runge}. Throughout this appendix, consider a bounded open domain $\Omega$ of class $\C^2$ such that $\supp V\subset \Omega$.
\begin{theorem}\label{thm:neu}
    Suppose $\lambda>0$ is \textbf{not} a Neumann eigenvalue for the operator $-\Delta+V$ in $\Omega$. Let $f\in L^2(\Omega)$ be such that $\supp f\subset \Omega$. Then, there exist $u\in H^1(\Omega)$ solving the problem
    \begin{equation}\label{eq:neumann1}
        \begin{cases}
                \left(\Delta +\lambda- V\right)u= f & \textrm{in }\Omega,\\
                \partial_{\nu}u=0 & \textrm{on }\partial \Omega.
            \end{cases}
    \end{equation}
\end{theorem}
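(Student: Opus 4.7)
The plan is to reformulate \eqref{eq:neumann1} variationally on $H^1(\Omega)$ and then apply the Fredholm alternative after a coercivity-restoring shift. Define the sesquilinear form
\[ B(u,\phi)\defeq\int_\Omega\nabla u\cdot\nabla\overline{\phi}-\lambda\int_\Omega u\,\overline{\phi}+\braket{Vu,\overline{\phi}}, \]
for $u,\phi\in H^1(\Omega)$. Continuity of $B$ on $H^1(\Omega)\times H^1(\Omega)$ follows from: (i) H\"older's inequality with the Sobolev embedding $H^1(\Omega)\hookrightarrow L^{p_d}(\Omega)$ for the $V^0$ contribution, and (ii) proposition \ref{prop:vf} combined with the continuous inclusion $H^1(\Omega)\hookrightarrow H^s(\Omega)$ for the $V^F=\gamma^s+\alpha\dd\sigma$ contribution. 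Testing against $\phi\in\mathcal{C}^\infty(\overline{\Omega})$ and integrating by parts, one checks that $u\in H^1(\Omega)$ solves \eqref{eq:neumann1} in the weak sense if and only if $B(u,\phi)=-(f,\phi)_{L^2(\Omega)}$ for every $\phi\in H^1(\Omega)$ --- the Neumann boundary condition being encoded naturally in the test-space $H^1(\Omega)$.

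Next, I shift by a large multiple of the $L^2$ inner product to enforce coercivity. Using the interpolation estimate from the proof of lemma \ref{lemma:int} (i.e.\ for every $\varepsilon>0$ there is $C_\varepsilon>0$ such that $|\braket{Vu,u}|\leq\varepsilon\norm{u}_{H^1(\Omega)}^2+C_\varepsilon\norm{u}_{L^2(\Omega)}^2$), one selects $\varepsilon<1/2$ and then $K=K(V,\lambda,\varepsilon)>0$ large enough so that the form
\[ B_K(u,\phi)\defeq B(u,\phi)+K(u,\phi)_{L^2(\Omega)} \]
satisfies $\operatorname{Re} B_K(u,u)\geq \tfrac12\norm{u}_{H^1(\Omega)}^2$. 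By the Lax--Milgram lemma, the operator $T_K\colon H^1(\Omega)\to (H^1(\Omega))^*$ represented by $B_K$ is an isomorphism. Writing the operator $T\colon H^1(\Omega)\to(H^1(\Omega))^*$ associated to $B$ as $T=T_K-K\iota$, where $\iota\colon H^1(\Omega)\hookrightarrow L^2(\Omega)\hookrightarrow (H^1(\Omega))^*$ is compact by Rellich's theorem, we obtain the factorisation $T=T_K\,(I-K\,T_K^{-1}\iota)$ with $K\,T_K^{-1}\iota$ compact on $H^1(\Omega)$.

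By the Fredholm alternative, $T$ is an isomorphism if and only if $I-KT_K^{-1}\iota$ is injective on $H^1(\Omega)$, which is equivalent to $T$ itself being injective. Injectivity of $T$ is precisely the statement that the only $u\in H^1(\Omega)$ solving $(\Delta+\lambda-V)u=0$ in $\Omega$ with $\partial_\nu u=0$ on $\partial\Omega$ is $u=0$, i.e.\ that $\lambda$ is not a Neumann eigenvalue of $-\Delta+V$. Under this hypothesis $T$ is therefore invertible, so the equation $Tu=-f$ (with $f$ viewed in $L^2(\Omega)\hookrightarrow (H^1(\Omega))^*$ by its compact support in $\Omega$) has a unique solution $u\in H^1(\Omega)$, yielding the desired weak solution of \eqref{eq:neumann1}. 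The main (albeit mild) obstacle is establishing the interpolation bound on $\braket{Vu,u}$ robust enough to absorb $\alpha\dd\sigma+\gamma^s$; this is exactly the estimate already isolated in lemma \ref{lemma:int}, which is why the argument goes through for the rough class of potentials considered here. The countability claim for the set of Neumann eigenvalues alluded to in the remark below proposition \ref{prop:runge} is recovered \emph{a posteriori} from this construction, since $T_K^{-1}\iota$ is compact on $H^1(\Omega)$ and its spectrum is a discrete set with $0$ as the only possible accumulation point.
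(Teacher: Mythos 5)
Your argument is correct, and it takes a genuinely different route from the paper. You recast the Neumann problem variationally on $H^1(\Omega)$, restore coercivity by a zeroth--order shift, invoke Lax--Milgram, and then reduce to the Fredholm alternative by observing that the shift is a compact perturbation via $H^1(\Omega)\hookrightarrow L^2(\Omega)$. Continuity and the key absorption estimate $|\braket{Vu,u}|\leq\varepsilon\norm{u}^2_{H^1}+C_\varepsilon\norm{u}^2_{L^2}$ are indeed available for the rough class $V=V^0+\gamma^s+\alpha\dd\sigma$, by exactly the Sobolev/interpolation bounds isolated in proposition \ref{prop:vf} and in the proof of lemma \ref{lemma:int}; and one should note that for real $V$ the form $B(u,u)$ is real (e.g.\ $\braket{\gamma^s u,\overline u}=\braket{g,D^s(\chi^2|u|^2)}$ is real), so taking $\operatorname{Re}$ is harmless. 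The paper instead proceeds through boundary integral equations: it constructs a particular solution in $X_\lambda^*$, reduces to a pure Neumann boundary problem, writes the candidate as a single layer potential, and solves the resulting second--kind Fredholm equation on $L^2(\partial\Omega)$ using the jump relations and weak--singularity lemmas (\ref{lem:kernel}, \ref{lem:single}, \ref{lem:double}). Your approach is more elementary and self--contained for the existence statement alone, and it recovers countability of the Neumann spectrum as a byproduct of spectral theory of a compact operator (to make that last remark airtight one should fix a shift $K$ independent of $\lambda$ so that $\lambda$ enters linearly, i.e.\ $T(\lambda)=T_A-(K+\lambda)\iota$, so that the Neumann eigenvalues correspond to the discrete set where $1/(K+\lambda)$ is an eigenvalue of the fixed compact operator $T_A^{-1}\iota$; as written, $K$ depends on $\lambda$, which muddies the claim slightly). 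The one thing you lose relative to the paper is that the layer potential machinery built here is not just a device to prove Theorem \ref{thm:neu}: the operators $\mathcal{S}$ and $\mathcal{N}$ and lemmas \ref{lem:single} and \ref{lem:double} are used independently and essentially in the Runge approximation (lemmas \ref{lemma:runge1}, \ref{lemma:runge2}, proposition \ref{prop:runge}). So the paper's choice of method amortises work that it needs anyway, whereas your route would leave that machinery to be developed separately.
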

Remember that we say that $\lambda$ is a Neumann eigenvalue for $-\Delta+V$ in $\Omega$ if there exists $\phi$ not identically zero solving the homogeneuos Neumann problem.
\begin{equation}\label{eq:homogeneousneumann}
    \begin{cases}
            \left(\Delta +\lambda- V\right)\phi= 0 & \textrm{in }\Omega,\\
            \partial_{\nu}\phi=0 & \textrm{on }\partial \Omega.
        \end{cases}
\end{equation}
These eigenvalues in fact form a countable set. To see this, define the unbounded operator $\left(T_N,D(T_N)\right)$ over $L^2(\Omega)$ as $T_Nu=\left(-\Delta+V\right)u$, with domain 
\[ D(T_N)=\{u\in L^2(\Omega)\,:\, \left(-\Delta+V\right)u\in L^2(\Omega),\; \exists\, \partial_\nu u\text{ on } \partial\Omega \text{ and }\partial_\nu \restr{u}{\partial \Omega}=0\}.\]
Observe now that a Neumann eigenvalue for $-\Delta+V$ on $\Omega$ will be an eigenvalue for $\left(T_N,D(T_N)\right)$. The domain $D(T_N)$ is a separable Hilbert space, since it is a subspace of $L^2(\Omega)$, and $T_N$ is symmetric over $D(T_N)$, which ensures that the set of its eigenvalues must be countable. \\
Indeed, suppose that there is an uncountable set of such eigenvalues. Let $\lambda$ and $\mu$ be any two distinct eigenvalues, and $u$ and $v$ be corresponding distinct eigenfunctions. Then,
\[
  \lambda\braket{u,v}=\braket{T_N u, v}=\braket{u,T_N v}= \mu\braket{u,v},
  \]
and thus $u\perp v$, which contradicts the fact that $D(T_N)$ is separable.\\
We will now prove Theorem \ref{thm:neu} using the method of layer potentials. Most of the arguments that we will follow come from \cite{coltoninverse2013}, \cite{follandintroduction1995} and \cite{coltonintegral}. The first step in the argument is to realise that, to solve \eqref{eq:neumann1}, we only need to be able to solve the following problem, for $g\in \mathcal{C}^\infty(\partial \Omega)$:
\begin{equation}\label{eq:neumann2}
    \begin{cases}
            \left(\Delta +\lambda- V\right)v= 0 & \textrm{in }\Omega,\\
            \partial_{\nu}v=g & \textrm{on }\partial \Omega.
        \end{cases}
\end{equation}
This can be done by taking a function $w$ that satisties the equation \[ (\Delta+\lambda-V)\,w=f. \] This function can be constructed in $X_\lambda^*$, for instance, by using the techniques in section \ref{sect:direct} after observing that, if $f\in L^2(\Omega)$, then its extension by $0$ to $\Rd$ is in $ X_\lambda$.\\
Also, since $\supp f\subset \Omega$, we have by Theorem 11.1.1 in \cite{hormanderlinear1963} that $w$ is smooth near $\partial \Omega$. Therefore, we can define its outward normal derivative in $\partial \Omega$, which will belong to $\mathcal{C}^\infty(\partial \Omega)$.
 Now, if we can solve problem \eqref{eq:neumann2}, set $g=\partial_\nu w$, and denote by $v$ the solution to this problem. Then, it is easy to check that $u=\restr{w}{\Omega}-v$ solves the problem \eqref{eq:neumann1}.\\
  Note that $\restr{w}{\Omega}\in H^1(\Omega)$ by proposition \ref{prop:restriction}, so we will only need to prove that $v$ belongs to $H^1(\Omega)$ to conclude that $u$ belongs too.\\
In this case it will be useful to think of the fundamental solution for the operator $\Delta+\lambda$ in $\Rd$, denoted by $\Phi_\lambda$, not in the distributional sense as in \eqref{eq:fundsol}, but as a Hankel function. In fact, $\Phi_\lambda$ will take the form
\begin{equation}
	\Phi_\lambda(x)=\frac{i}{4}\left(\frac{\lambda^\half}{2\pi|x|}\right)^{d/2-1}H_{d/2-1}^{(1)}\left(\lambda^\half |x|\right),
\end{equation}
where $H_\nu^{(1)}$ denotes the Hankel function of the first kind (or Bessel function of the third kind). If we define $u_{in}(x,y)=\Phi_\lambda(x-y)$ and recall the limiting properties of the Hankel functions, it's relatively easy to check that
\begin{align}\label{eq:kernel}
    \begin{split}
	    u_{in}(x,y)&= F(x,y)|x-y|^{2-d},\\
	    \partial_{\nu_x}u_{in}(x,y)&=G(x,y)|x-y|^{2-d},\\
        \partial_{\nu_y}u_{in}(x,y)&=\partial_{\nu_x}u_{in}(y,x)=G(y,x)|x-y|^{2-d},
    \end{split}
\end{align}
with $F$ and $G$ being two bounded functions on $\partial \Omega\times\partial \Omega$. Then, $u_{in}$, $\partial_{\nu_x}u_{in}$ and $\partial_{\nu_y}u_{in}$ are, by definition, \textbf{weakly singular kernels} of order $d-2$ on $\partial \Omega\times\partial \Omega$. The following lemma is a combination of those in \cite{follandintroduction1995}, chapter 3B, and is an important piece to solve the problem \eqref{eq:neumann2}.
\begin{lemma}[\cite{follandintroduction1995}]\label{lem:kernel}
    If we denote by $T$ the integral operator over $\partial \Omega$ defined by a weakly singular kernel $K$ of order $\alpha$ on $\partial \Omega\times \partial \Omega$, with $0<\alpha<d-1$, as
    \[ \left(Tf\right)(x)=\int_{\partial \Omega}T(x,y)\,f(y)\dd S(y), \] then the following statements hold:
    \begin{enumerate}
        \item $T$ is compact on $L^2(\partial \Omega)$,
        \item $T$ transforms bounded functions into continuous functions, and
        \item if $f\in L^2(\partial \Omega)$ and $f+Tf\in \mathcal{C}(\partial \Omega)$, then $f\in\mathcal{C}(\partial \Omega)$.
    \end{enumerate}
\end{lemma}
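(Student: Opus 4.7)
The three conclusions are standard facts about integral operators with weakly singular kernels on a compact hypersurface, and the single quantitative ingredient underlying all of them is that, since $0<\alpha<d-1$ and $\partial\Omega$ has dimension $d-1$, the integral $\int_{\partial\Omega}|x-y|^{-\alpha}\dd S(y)$ is bounded uniformly in $x\in\partial\Omega$ (and tends to zero when the domain of integration is restricted to a small ball around $x$). I would first record this estimate via a local parametrisation of $\partial\Omega$ and standard polar coordinates, since it is the engine of all three arguments.

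For part (1), the plan is to approximate $T$ in operator norm on $L^2(\partial\Omega)$ by truncated operators $T_\varepsilon$ whose kernels coincide with $K$ on $\{|x-y|\geq\varepsilon\}$ and vanish elsewhere. Because $\partial\Omega$ is compact and the truncated kernel is bounded, each $T_\varepsilon$ is Hilbert--Schmidt, hence compact. The Schur test applied to $K-K_\varepsilon$ yields $\|T-T_\varepsilon\|_{L^2\to L^2}\lesssim \sup_{x}\int_{|x-y|<\varepsilon}|x-y|^{-\alpha}\dd S(y)\to 0$ as $\varepsilon\to 0$, and since the compact operators form a closed subspace of the bounded operators, $T$ itself is compact.

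For part (2), take $f\in L^\infty(\partial\Omega)$ and fix $x_0\in\partial\Omega$. For $\delta>0$ I would split $(Tf)(x)-(Tf)(x_0)$ into the integrals over $\{|y-x_0|<2\delta\}$ and its complement. On the small set, the integrand is bounded uniformly for $x$ near $x_0$ by $C\norm{f}_{L^\infty}(|x-y|^{-\alpha}+|x_0-y|^{-\alpha})$, with integral of order $\delta^{d-1-\alpha}$; outside the ball, $K$ is continuous away from the diagonal (implicit in the weakly singular hypothesis, since off the diagonal $K$ is locally bounded in $y$ for $x$ near $x_0$) and the difference converges to zero by dominated convergence. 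Taking $\delta$ small and then $x\to x_0$ shows continuity of $Tf$ at $x_0$.

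For part (3), the plan is a regularity bootstrap. Writing $f=g-Tf$ and iterating gives
\[
f=\sum_{k=0}^{N-1}(-1)^k T^k g + (-1)^N T^N f.
\]
Each $T^k g$ with $g\in\mathcal{C}(\partial\Omega)$ is again continuous, by repeated application of part (2). The kernel of $T^N$ is controlled by the iterated convolution $|x-y|^{-\alpha}\ast\cdots\ast|x-y|^{-\alpha}$ along $\partial\Omega$, whose singularity order decreases by $(d-1)-\alpha>0$ at each composition, until for $N$ large enough the kernel is bounded and $T^N$ maps $L^2(\partial\Omega)$ into $L^\infty(\partial\Omega)$. A final application of $T$ combined with part (2) then places $T(T^N f)$ in $\mathcal{C}(\partial\Omega)$, and writing $T^N f = g_{N-1} - T^{N+1}f$ (with $g_{N-1}\in\mathcal{C}$) shows $T^N f$ is continuous; substituting back into the display recovers the continuity of $f$. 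The main obstacle is precisely this iterated hypersurface Riesz-type estimate: making quantitative the fact that convolving two weakly singular kernels of orders $\alpha_1,\alpha_2$ along $\partial\Omega$ produces a kernel of order $\alpha_1+\alpha_2-(d-1)$ (or bounded when that quantity is non-positive). Using a $\C^2$ atlas of $\partial\Omega$ and compactness, this reduces to the classical Euclidean Riesz-potential convolution bound.
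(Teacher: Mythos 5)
The paper does not prove this lemma; it is quoted verbatim from Folland's book (chapter~3B), which is where the argument lives. Your sketch reconstructs precisely that classical argument: Hilbert--Schmidt truncation plus the Schur test for~(1), near/far splitting of the singular integral for~(2), and a Neumann-series bootstrap driven by the kernel-composition estimate for~(3). Two points worth tightening. In~(2), off-diagonal continuity of $K$ is not ``implicit'' in local boundedness --- boundedness does not give continuity. What saves the argument is that the definition of a weakly singular kernel of order $\alpha$ (as in Folland, and as satisfied by the kernels in \eqref{eq:kernel}, which are smooth off the diagonal) \emph{includes} continuity on $\{x\neq y\}$; you should invoke that hypothesis explicitly rather than try to derive it. In~(3), the identity you invoke, $T^Nf=g_{N-1}-T^{N+1}f$, is obtained by applying $T^N$ to $f+Tf=g$, giving $T^Nf=T^Ng-T^{N+1}f$ with $T^Ng\in\mathcal{C}(\partial\Omega)$ from iterating~(2) and $T^{N+1}f\in\mathcal{C}(\partial\Omega)$ because the order-dropping composition estimate makes $T^N$ map $L^2$ into $L^\infty$ once $N\geq (d-1)/((d-1)-\alpha)$, after which one more application of~(2) gains continuity. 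That composition estimate --- a kernel of order $\alpha_1$ composed with one of order $\alpha_2$ on a $(d-1)$-dimensional surface yields a kernel of order $\max(0,\alpha_1+\alpha_2-(d-1))$ --- is the nontrivial lemma from Folland underpinning~(3), and you are right to isolate it as the crux; in the present paper it is imported along with the rest of the statement rather than reproven.
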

Take now $u_{to}=u_{in}+u_{sc}$, where $u_{sc}$ is the scattering solution defined in \eqref{eq:helm} and constructed in section \ref{sect:direct}, and define for $f$ continuous on $\partial \Omega$ and $x\in \Rd\setminus \partial \Omega$ the \textbf{single layer potential} with moment $f$ as
\[ \left(\mathcal{S}f\right)(x)=\int_{\partial \Omega}u_{to}(x,y)\,f(y)\dd S(y), \]
and the \textbf{double layer potential} as 
\[ \left(\mathcal{D}f\right)(x)=\int_{\partial \Omega}\partial_{\nu_y}u_{to}(x,y)\,f(y)\dd S(y),. \]
Define further the operator $\mathcal{N}$, which is the adjoint of $\mathcal{D}$ over $\partial \Omega$, as
\[ \left(\mathcal{N}f\right)(x)=\int_{\partial \Omega}\partial_{\nu_x}u_{to}(x,y)\,f(y)\dd S(y),\quad x\in \partial \Omega, \]
that must be understood as an improper integral.
Now, we have the following lemmas, which are similar to classical results as in \cite{coltoninverse2013}, \cite{follandintroduction1995} and \cite{coltonintegral}  for the Helmholtz and Laplace equations. We denote by $u_+$ the trace on $\partial \Omega$ of $\restr{u}{\Rd\setminus \overline{\Omega}}$, and by $u_-$ the trace on $\partial \Omega$ of $\restr{u}{\Omega}$. As well, we denote by $\partial_\nu u_+$ and $\partial_\nu u_-$ the normal derivative of those, always with respect to the outward-pointing normal vector of $\partial \Omega$ (as seen from inside $\Omega$).
\begin{lemma}\label{lem:single}
    Consider $d\geq3$. Lef $f\in\mathcal{C}(\partial \Omega)$. Then, the single layer potential $u=\mathcal{S}f$  is continuous throughout $\Rd$, and we have the limiting values
    \begin{equation}\label{eq:bdsingle}
      \partial_\nu u_{\pm}(x)=\left(\mathcal{N}f\right)(x)\mp \frac{1}{2}f(x),\quad x\in\partial \Omega,
  \end{equation}
  where the integral exists as an improper integral.
  Consequently, we have the jump relation $\partial_\nu u_--\partial_\nu u_+ = f$ on $\partial \Omega$. Furthermore, $u$ it is a solution in $H^1_\text{loc}(\Rd)$ to $\helm u = 0$ in $\mathbb{R}^d\setminus \partial \Omega$ and fullfils SRC \eqref{eq:src}. 
\end{lemma}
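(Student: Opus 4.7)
The natural approach is to decompose $u=\mathcal{S}f=\mathcal{S}_{in}f+\mathcal{S}_{sc}f$, where
\[ (\mathcal{S}_{in}f)(x)=\int_{\partial\Omega}u_{in}(x,y)f(y)\dd S(y), \qquad (\mathcal{S}_{sc}f)(x)=\int_{\partial\Omega}u_{sc}(x,y)f(y)\dd S(y), \]
and then to reduce all the ``singular'' behaviour to the classical piece $\mathcal{S}_{in}f$. Since $\partial\Omega$ is separated from $\supp V$, for each fixed $y\in\partial\Omega$ the function $u_{sc}(\cdot,y)$ solves $(\Delta+\lambda)u_{sc}(\cdot,y)=0$ in a neighbourhood of $\partial\Omega$, hence is smooth there by Theorem 11.1.1 in \cite{hormanderlinear1963}; moreover the reciprocity relation of lemma \ref{lemmareci} makes the kernel $u_{sc}(x,y)$ jointly smooth in a neighbourhood of $\partial\Omega\times\partial\Omega$. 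Consequently $\mathcal{S}_{sc}f\in\C^\infty$ in a neighbourhood of $\partial\Omega$ and its normal derivative on $\partial\Omega$ equals the kernel operator with $\partial_{\nu_x}u_{sc}$, which is the non-singular part of $\mathcal{N}f$.

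For $\mathcal{S}_{in}f$ the plan is to invoke the classical theory of single layer potentials for the Helmholtz operator, as developed in \cite{coltoninverse2013,coltonintegral,follandintroduction1995}. The weakly singular estimate \eqref{eq:kernel} together with the fact that $\partial\Omega$ is of class $\C^2$ puts $\mathcal{S}_{in}f$ squarely in the scope of that classical framework. It gives continuity of $\mathcal{S}_{in}f$ across $\partial\Omega$ and the standard jump formula
\[ \partial_\nu(\mathcal{S}_{in}f)_\pm(x) = \int_{\partial\Omega}\partial_{\nu_x}u_{in}(x,y)\,f(y)\dd S(y)\mp\tfrac12 f(x), \]
the integral existing as an improper integral. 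Adding the smooth contribution from $\mathcal{S}_{sc}f$ yields exactly \eqref{eq:bdsingle}, and subtracting the two sides at $x\in\partial\Omega$ produces the jump relation $\partial_\nu u_--\partial_\nu u_+=f$.

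It remains to verify the PDE, the SRC and the $H^1_{\text{loc}}$-regularity. The first is a direct distributional computation: for $x\notin\partial\Omega$,
\[ (\Delta+\lambda-V)\,u_{to}(x,y)=(\Delta+\lambda)u_{in}(x,y)-Vu_{in}(x,y)+(\Delta+\lambda-V)u_{sc}(x,y)=\delta_y(x), \]
so that $(\Delta+\lambda-V)u=0$ away from $\partial\Omega$ by differentiation under the integral sign, justified by the smoothness of the integrand away from the diagonal. The SRC for $u$ is inherited by linearity from the SRC for $u_{in}(\cdot,y)$ and for $u_{sc}(\cdot,y)$, using that $\partial\Omega$ is compact so that the decay is uniform in $y$. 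Finally, for $H^1_{\text{loc}}$-regularity, $\mathcal{S}_{sc}f$ is smooth near $\partial\Omega$ and $\mathcal{S}_{sc}f\in X_\lambda^*\subset H^1_{\text{loc}}(\Rd)$ away from $\partial\Omega$ by proposition \ref{prop:restriction} and corollary \ref{cor:h1loc}; the critical case is $\mathcal{S}_{in}f$ near $\partial\Omega$, which is handled by the classical mapping property $\mathcal{S}_{in}:L^2(\partial\Omega)\to H^1_{\text{loc}}(\Rd)$ for single layer potentials with $\C^2$ boundary (again cf.\ \cite{coltoninverse2013}).

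The main obstacle is keeping the book-keeping clean: one has to verify that the non-classical kernel $\partial_{\nu_y}u_{to}(x,y)$ used in defining $\mathcal{N}f$ splits exactly into the classical weakly singular piece (which supplies the $\mp\frac12 f$ jump) plus a smooth remainder (which does not), and that the smoothness of $u_{sc}$ near $\partial\Omega$ genuinely holds in a joint neighbourhood of $\partial\Omega\times\partial\Omega$, which is where lemma \ref{lemmareci} is needed. Once these technicalities are in place, the statement reduces to the textbook single layer theory plus linearity.
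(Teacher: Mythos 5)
Your proposal follows essentially the same route as the paper: decompose $\mathcal{S}f$ into the classical piece $\mathcal{S}_{in}f$ (handled by textbook single-layer theory, which supplies the jump) and the scattering piece $\mathcal{S}_{sc}f$ (smooth near $\partial\Omega$, hence contributing no jump), and then assemble the PDE, SRC and $H^1_{\text{loc}}$ assertions by linearity. Two small remarks on the places where your write-up is looser than the paper's. First, invoking lemma \ref{lemmareci} to get joint smoothness of $u_{sc}(x,y)$ near $\partial\Omega\times\partial\Omega$ is unnecessary: since the integration is in $y$ only, it suffices that for each fixed $y\in\partial\Omega$ the function $u_{sc}(\cdot,y)$ is smooth near $\partial\Omega$, together with a bound uniform in $y$ to pass derivatives and limits through the integral. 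Second, and more substantively, the assertion that $\mathcal{S}_{sc}f$ (or its restriction away from $\partial\Omega$) lies in $X_\lambda^*$ and hence $H^1_{\text{loc}}$ does need an argument: you are integrating a one-parameter family $\{u_{sc}(\cdot,y)\}_{y\in\partial\Omega}\subset X_\lambda^*$ against $f\dd S(y)$, so you must show a uniform-in-$y$ bound. The paper does this by writing
\[ \norm{\mathcal{S}_{sc}f}_{H^1(K)}\lesssim \sup_{y\in\partial\Omega}\norm{u_{sc}(\cdot,y)}_{H^1(K)}\lesssim \sup_{y\in\partial\Omega}\norm{u_{sc}(\cdot,y)}_{X_\lambda^*}\lesssim\sup_{y\in\partial\Omega}\norm{Vu_{in}(\cdot,y)}_{X_\lambda}, \]
and then inserting a smooth cut-off $\eta$ with $\eta\equiv 1$ on $\supp V$ and $\eta\equiv 0$ near $\partial\Omega$, so that $Vu_{in}(\cdot,y)=V\eta u_{in}(\cdot,y)$ and the boundedness of multiplication by $V$ from $X_\lambda^*$ to $X_\lambda$ plus the smoothness of $\eta u_{in}(\cdot,y)$ gives $\norm{Vu_{in}(\cdot,y)}_{X_\lambda}\lesssim 1$ uniformly in $y$. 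This quantitative step is the one part of your proposal that needs to be made explicit; otherwise the argument is correct and matches the paper's.
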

\begin{proof}
    First, note that the single layer potential for the homogeneuos Helmholtz equation \[ v(x)=\int_{\partial \Omega}u_{in}(x,y)f(y)\dd S(y) \] can be extended to the boundary, is a solution in $H^1_\text{loc}(\Rd)$ to $\left(\Delta+\lambda\right) v= 0$ in $\mathbb{R}^d\setminus \partial \Omega$, fullfils SRC \eqref{eq:src} and has boundary values     
    \begin{equation*}
        \partial_\nu v_{\pm}(x)=\int_{\partial \Omega}\partial_{\nu_x}u_{in}(x,y)\,f(y)\dd S(y)\mp \frac{1}{2}f(x),\quad x\in\partial \Omega,
    \end{equation*} which is a classical result, see for example \cite{coltoninverse2013}.
    Now, define \[ w(x)=\int_{\partial \Omega}u_{sc}(x,y)f(y)\dd S(y). \] To see that $w$ is in $H^1_\text{loc}(\Rd)$, take $K\in\Rd$ compact, and observe that, by proposition \ref{prop:restriction} and Theorem \ref{thm:1.1},
    \[ \norm{w}_{H^1(K)}\lesssim \sup_{y\in\partial\Omega}\norm{u_{sc}({\centerdot\,,y})}_{H^1(K)}\lesssim\sup_{y\in\partial\Omega} \norm{u_{sc}({\centerdot\,,y})}_{X_\lambda^*} \lesssim \sup_{y\in\partial\Omega}\norm{Vu_{in}({\centerdot\,,y})}_{X_\lambda}. \]
    If we take a smooth cut-off function $\eta$ such that $\eta\equiv 1$ in $\supp V$ and $\eta\equiv 0$ in $\partial\Omega$, we have that $V(x)u_{in}(x,y)=V(x)\eta(x)u_{in}(x,y)$ and
    \[ \norm{Vu_{in}({\centerdot\,,y})}_{X_\lambda}\lesssim \norm{\eta u_{in}(\centerdot\,,y)}_{X_\lambda^*}\lesssim 1, \]
    where we have used that multiplication by $V$ is bounded from $X_\lambda^*$ to $X_\lambda$, as showed in section \ref{sect:direct}, and that $u_{in}(\centerdot\,,y)$ is smooth away from $y$ by Theorem 11.1.1 in \cite{hormanderlinear1963}, since it solves $(\Delta+\lambda)u_{in}(\centerdot\,,y)=0$. This proves that $w$ is in $H^1_\text{loc}(\Rd)$\\
     Moreover, since $u_{sc}$ solves the problem \eqref{eq:helm}, it is easy to check that $u = v + w$ solves $\helm u = 0$ in $\mathbb{R}^d\setminus \partial \Omega$ and fullfils the SRC \eqref{eq:src}. Also, since for any $y\in\partial \Omega$, $u_{sc}(\centerdot\,,y)$ solves $\left(\Delta+\lambda\right) u_{sc}(\centerdot\,y)= 0$ in $\Rd\setminus\supp V$, it is smooth in this set by Theorem 11.1.1 in \cite{hormanderlinear1963}, and in particular it is smooth near $\partial \Omega$. Therefore, the limiting values of $w$ on the boundary are just
    \[ \partial_\nu w_{\pm}(x)=\int_{\partial \Omega}\partial_{\nu_x}u_{sc}(x,y)\,f(y)\dd S(y),\quad x\in\partial \Omega, \] and therefore \eqref{eq:bdsingle} is fullfiled. 
\end{proof}
\begin{lemma}\label{lem:double}
    Consider $d\geq3$. Lef $f\in\mathcal{C}(\partial \Omega)$. Then, the double layer potential $u=\mathcal{D}f$  can be extended continuosly to $\partial \Omega$, and we have the limiting values
    \begin{equation}\label{eq:bddouble}
        u_{\pm}(x)=\left(\mathcal{D}f\right)(x)\pm \frac{1}{2}f(x),\quad x\in\partial \Omega,
    \end{equation}
    where the integral exists as an improper integral. Consequently, we have the jump relation $u_+-u_- = f$ on $\partial \Omega$. Furthermore, $u$ a solution in $H^1_{loc}(\Rd)$ to $\helm u = 0$ in $\mathbb{R}^d\setminus \partial \Omega$, it fullfils SRC \eqref{eq:src} and $\partial_\nu u_--\partial_\nu u_+=0$ on $\partial\Omega$.
\end{lemma}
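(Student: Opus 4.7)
The plan is to mirror the proof of Lemma \ref{lem:single}: split $u_{to}=u_{in}+u_{sc}$ and decompose $u=\mathcal{D}f=v+w$ with
\[ v(x)=\int_{\partial\Omega}\partial_{\nu_y}u_{in}(x,y)\,f(y)\,\dd S(y),\qquad w(x)=\int_{\partial\Omega}\partial_{\nu_y}u_{sc}(x,y)\,f(y)\,\dd S(y). \]
The classical part $v$ is the double-layer potential for the Helmholtz equation, and by standard theory (see e.g.\ \cite{coltoninverse2013, coltonintegral}) it solves $(\Delta+\lambda)v=0$ in $\mathbb{R}^d\setminus\partial\Omega$, satisfies SRC, extends continuously from each side of $\partial\Omega$ with the jump $v_\pm=v_{\partial\Omega}\pm\tfrac{1}{2}f$ where $v_{\partial\Omega}$ denotes the corresponding improper integral on $\partial\Omega$ (well-defined by the weak singularity \eqref{eq:kernel} and Lemma \ref{lem:kernel}), and has continuous normal derivative across $\partial\Omega$.

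The bulk of the work is to handle $w$. The key observation is that $\supp V\subset\Omega$ lies at positive distance from $\partial\Omega$, so for every $y\in\partial\Omega$ the function $\partial_{\nu_y}u_{sc}(\cdot,y)$ solves the homogeneous Helmholtz equation in a fixed neighborhood $\mathcal{U}$ of $\partial\Omega$; Theorem 11.1.1 in \cite{hormanderlinear1963} makes it smooth there, uniformly in $y\in\partial\Omega$ by the continuous dependence furnished by Theorem \ref{thm:1.1}. Hence differentiation under the integral sign will show that $w$ is smooth on $\mathcal{U}$, so $w$ carries no jump across $\partial\Omega$ and its normal derivative is continuous there. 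Adding the contributions of $v$ and $w$, and noting that $(\mathcal{D}f)(x)=v_{\partial\Omega}(x)+w(x)$ for $x\in\partial\Omega$, produces the stated relations $u_\pm=\mathcal{D}f\pm\tfrac{1}{2}f$ and $\partial_\nu u_-=\partial_\nu u_+$. Differentiating $(\Delta_x+\lambda-V(x))u_{sc}(x,y)=V(x)u_{in}(x,y)$ in $y$ and integrating against $f$ yields $(\Delta+\lambda-V)w=Vv$ in $\mathbb{R}^d\setminus\partial\Omega$, so that $(\Delta+\lambda-V)u=(\Delta+\lambda)v=0$ there.

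The $H^1_{\text{loc}}$ regularity of $w$ is obtained exactly as in Lemma \ref{lem:single} through Proposition \ref{prop:restriction}, Theorem \ref{thm:1.1}, and a smooth cutoff $\eta$ equal to $1$ on $\supp V$ and vanishing near $\partial\Omega$, which leads to
\[ \norm{w}_{H^1(K)}\lesssim \sup_{y\in\partial\Omega}\norm{\partial_{\nu_y}u_{sc}(\cdot,y)}_{X_\lambda^*}\lesssim \sup_{y\in\partial\Omega}\norm{V\,\partial_{\nu_y}u_{in}(\cdot,y)}_{X_\lambda}<\infty \]
for any compact $K\subset\mathbb{R}^d$. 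I expect the main obstacle to be the SRC for $w$: the strategy will be to identify each $\partial_{\nu_y}u_{sc}(\cdot,y)$ as the unique scattering solution in $X_\lambda^*$ associated to the compactly supported source $V\partial_{\nu_y}u_{in}(\cdot,y)\in X_\lambda$ (invoking Theorem \ref{thm:1.1} together with uniqueness), so that it individually satisfies SRC; superposing over the compact set $\partial\Omega$ will then preserve the SRC limit, closing the argument.
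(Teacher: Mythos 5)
Your plan is correct and follows essentially the same route as the paper: decompose $u_{to}=u_{in}+u_{sc}$ so that $\mathcal{D}f=v+w$ with $v$ the classical Helmholtz double-layer potential (covered by \cite{coltoninverse2013,coltonintegral}, including the Lyapunov–Tauber identity $\partial_\nu v_-=\partial_\nu v_+$) and $w$ built from $\partial_{\nu_y}u_{sc}(\cdot,y)$, which is smooth near $\partial\Omega$ because $u_{sc}(\cdot,y)$ solves the free Helmholtz equation away from $\supp V$; the $H^1_{\textnormal{loc}}$ and SRC conclusions are then imported from the argument of Lemma \ref{lem:single}. The paper's own proof is a two-sentence remark to exactly this effect, so your proposal adds detail (the difference-quotient identification of $\partial_{\nu_y}u_{sc}(\cdot,y)$ as the scattering solution driven by $V\partial_{\nu_y}u_{in}(\cdot,y)$, used both for SRC and the $H^1$ bound) but does not diverge from the paper's approach.
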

\begin{proof} 
    The proof goes exactly as the proof of lemma \ref{lem:single} above, we just need to make a comment on how to prove the last statement. Indeed, the fact that the double layer potential for the homogeneous Helmholtz equation \[ v(x)=\int_{\partial \Omega}\partial_{\nu_y} u_{in}(x,y)f(y)\dd S(y) \] fulfills that $\partial_\nu v_--\partial_\nu v_+=0$ on $\partial\Omega$ is classical (see for example \cite{coltoninverse2013}). Meanwhile, the function \[ w(x)=\int_{\partial \Omega}\partial_{\nu_y}u_{sc}(x,y)f(y)\dd S(y) \] is smooth away from $\supp V$, since $u_{sc}(\centerdot,y)$ is smooth away from $\supp V$ as well. 
\end{proof}
With lemma \ref{lem:single}, we can find an $H^1(\Omega)$ solution to the problem \eqref{eq:neumann2} for $g\in L^2(\partial \Omega)$ if we can find $\varphi\in L^2(\partial \Omega)$ solving the integral equation $\mathcal{N}\varphi+\frac{1}{2}\varphi=g$. We will solve this equation via Fredholm theory. For simplicity, define the operators
\[ \mathcal{K}=2\mathcal{D},\quad\mathcal{K}^*=2\mathcal{N}, \]
and the equation we are trying to solve can be written as $\left(\mathcal{K}^*+I\right)\varphi = 2g$.
Note that $\mathcal{K}^*$ is the adjoint operator of $\mathcal{K}$ and, by lemma \ref{lem:kernel}, both are compact operators over $L^2(\partial \Omega)$. To prove the existence of a solution, it is enough to prove that the operator $\mathcal{K}^*+I$ is surjective, which by Fredholm alternative is equivalent to proving that $\mathcal{K}+I$ is injective. Indeed, we have the following lemma:
\begin{lemma}
    Suppose that $\lambda>0$ is not a Neumann eigenvalue for the operator $-\Delta+V$ in $\Omega$. Then, 
    \[ \ker(\mathcal{K}+I)=\{0\}. \]
\end{lemma}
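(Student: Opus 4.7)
The plan is to pick $\varphi\in\ker(\mathcal{K}+I)$ and form the double layer potential $u=\mathcal{D}\varphi$, then show $u$ must be identically zero on both sides of $\partial\Omega$, which by the jump relation in lemma \ref{lem:double} forces $\varphi = 0$. First I would note that, using $\mathcal{K}=2\mathcal{D}$, the limiting values from lemma \ref{lem:double} rewrite as
\[ u_+ = \tfrac{1}{2}(\mathcal{K}+I)\varphi, \qquad u_- = \tfrac{1}{2}(\mathcal{K}-I)\varphi \quad\text{on } \partial\Omega, \]
so the assumption $(\mathcal{K}+I)\varphi=0$ gives immediately $u_+ = 0$. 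By lemma \ref{lem:double}, $u\in H^1_{\mathrm{loc}}(\Rd)$ solves $(\Delta+\lambda-V)u=0$ in $\Rd\setminus\partial\Omega$, satisfies SRC, and has $\partial_\nu u_-=\partial_\nu u_+$ on $\partial\Omega$.

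In a second step I would handle the exterior. On $\Rd\setminus\overline\Omega$ the potential $V$ vanishes, so $u$ solves $(\Delta+\lambda)u=0$ with Dirichlet data $u_+=0$ and SRC. This is exactly the setup for the standard Rellich--Sommerfeld uniqueness argument, carried out in the paper in the proof that the homogeneous SRC problem has only the zero solution (the identity \eqref{eq:srcuniq} combined with Rellich's lemma). Applying that same reasoning — with the role of $\partial\D$ now played by $\partial\Omega$ and $u_+=0$ replacing the use of realness of $V$ in the interior — yields $u\equiv 0$ on $\Rd\setminus\overline\Omega$. In particular $\partial_\nu u_+ =0$ on $\partial\Omega$, and hence $\partial_\nu u_- = 0$ by the jump relation.

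Third, I would descend to the interior problem. Since $u\in H^1(\Omega)$ (by lemma \ref{lem:double}) and solves
\[ \begin{cases}(\Delta+\lambda-V)u = 0 & \text{in } \Omega,\\ \partial_\nu u_- = 0 & \text{on } \partial\Omega,\end{cases} \]
the hypothesis that $\lambda$ is not a Neumann eigenvalue of $-\Delta+V$ in $\Omega$ forces $u\equiv 0$ in $\Omega$, and so $u_-=0$. Combining $u_+=u_-=0$ with the jump relation $u_+ - u_- = \varphi$ from lemma \ref{lem:double} yields $\varphi=0$, proving $\ker(\mathcal{K}+I)=\{0\}$.

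The only delicate step I expect is the exterior uniqueness: one must verify that $u=\mathcal{D}\varphi$ is regular enough on $\Rd\setminus\overline\Omega$ for the Rellich argument of \eqref{eq:srcuniq} to apply. This is fine because $u$ solves $(\Delta+\lambda)u=0$ in the exterior domain, so by Theorem 11.1.1 in \cite{hormanderlinear1963} it is smooth there, and $u_+=0$ provides the Dirichlet data needed in the boundary integral. All other ingredients — the jump formulas, the fact that $u\in H^1$ near $\partial\Omega$, and the compactness/symbolic structure of $\mathcal{K}$ — are already packaged in lemmas \ref{lem:kernel} and \ref{lem:double}.
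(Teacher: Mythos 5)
Your argument is correct and essentially identical to the paper's: form $u=\mathcal{D}\varphi$, deduce $u_+=0$ from the kernel hypothesis, invoke exterior SRC/Dirichlet uniqueness to get $u\equiv 0$ outside and hence $\partial_\nu u_-=0$, use the Neumann non-eigenvalue hypothesis to get $u\equiv 0$ inside, and conclude $\varphi=u_+-u_-=0$ from the jump relation. The one point you glossed over (which the paper addresses) is that $\varphi$ is a priori only in $L^2(\partial\Omega)$, while lemma \ref{lem:double} is stated for $f\in\mathcal{C}(\partial\Omega)$; you should note that $\varphi=-\mathcal{K}\varphi$ and lemma \ref{lem:kernel}(3) bootstrap $\varphi$ to continuity before forming the double layer potential.
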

\begin{proof}
    Let $\psi\in\ker(\mathcal{K}+I)$, which, by lemma \ref{lem:kernel}, will be continuous on $\partial \Omega$. Now define $v=\mathcal{D}\psi$. By lemma \ref{lem:double}, $v$ is a solution to the problem
    \begin{equation*}
		\begin{cases}
		\left(\Delta+\lambda\right) v = 0 & \textrm{in }\mathbb{R}^d\setminus \overline{\Omega},\\
		v_+ = 0 & \textrm{on }\partial \Omega,\\ 
		v\textrm{ satisfying SRC}.
		\end{cases}
	\end{equation*}
    Then, $v=0$ in $\Rd\setminus \Omega$, by uniqueness of the exterior Dirichlet problem \cite{coltonintegral}. Therefore, $\partial_\nu v_+=0$ on $\partial \Omega$ and, again by lemma \ref{lem:double}, $\partial_\nu v_-=0$. This means that $v$ is as well a solution to the problem
    \begin{equation*}
		\begin{cases}
		\left(\Delta+\lambda-V\right) v = 0 & \textrm{in }\Omega,\\
		\partial_\nu v_- = 0 & \textrm{on }\partial \Omega,\\ 
        \end{cases}
	\end{equation*}
    but, since $\lambda$ is not a Neumann eigenvalue for $-\Delta+V$ in $\Omega$, we have that $v=0$ in $\Omega$, and in particular $v_-=0$. Finally, by lemma \ref{lem:double}, $\psi=u_+-u_-=0$, and the lemma is proved.
\end{proof}
With this last lemma, the proof of Theorem \ref{thm:neu} is concluded.
\bibliographystyle{plain}
\bibliography{fullzoterolibrary}

\begin{thebibliography}{10}

\bibitem{agmonasymptotic1976}
Shmuel Agmon and Lars Hörmander.
\newblock Asymptotic properties of solutions of differential equations with
  simple characteristics.
\newblock {\em Journal d'Analyse Mathématique}, 30(1):1--38, 1976.

\bibitem{alessandrinistable1988}
Giovanni Alessandrini.
\newblock Stable determination of conductivity by boundary measurements.
\newblock {\em Applicable Analysis}, 27(1-3):153--172, 1988.

\bibitem{brezisfunctional2011}
Haim Brezis.
\newblock {\em Functional {Analysis}, {Sobolev} {Spaces} and {Partial}
  {Differential} {Equations}}.
\newblock Springer New York, New York, NY, 2011.

\bibitem{caroscattering2020}
Pedro Caro and Andoni Garcia.
\newblock Scattering with {Critically}-{Singular} and $\delta$-{Shell}
  {Potentials}.
\newblock {\em Communications in Mathematical Physics}, 379(2):543--587, 2020.

\bibitem{caroglobal2016}
Pedro Caro and Keith Rogers.
\newblock Global uniqueness for the {Calderon problem with {Lipschitz}
  conductivities}.
\newblock {\em Forum of Mathematics, Pi}, 4:e2, 2016.

\bibitem{chensecond1998}
Ya-Zhe Chen and Lan-Cheng Wu.
\newblock {\em Second {Order} {Elliptic} {Equations} and {Elliptic} {Systems}},
  volume 174 of {\em Translations of {Mathematical} {Monographs}}.
\newblock AMS, 1998.

\bibitem{Christ1991}
F.M Christ and M.I Weinstein.
\newblock Dispersion of small amplitude solutions of the generalized
  korteweg-de vries equation.
\newblock {\em Journal of Functional Analysis}, 100(1):87--109, 1991.

\bibitem{coltonintegral}
David Colton and Rainer Kress.
\newblock {\em Integral Equation Methods in Scattering Theory}.
\newblock Society for Industrial and Applied Mathematics, Philadelphia, PA,
  2013.

\bibitem{coltoninverse2013}
David Colton and Rainer Kress.
\newblock {\em Inverse {Acoustic} and {Electromagnetic} {Scattering} {Theory}},
  volume~93 of {\em Applied {Mathematical} {Sciences}}.
\newblock Springer New York, New York, NY, 2013.

\bibitem{follandintroduction1995}
Gerald~B. Folland.
\newblock {\em Introduction to {Partial} {Differential} {Equations}}.
\newblock Princeton University Press, 1995.

\bibitem{freitas2023domain}
Pedro Freitas and James~B. Kennedy.
\newblock On domain monotonicity of neumann eigenvalues of convex domains,
  2023.

\bibitem{giaquintaintroduction2012}
Mariano Giaquinta and Luca Martinazzi.
\newblock {\em An {Introduction} to the {Regularity} {Theory} for {Elliptic}
  {Systems}, {Harmonic} {Maps} and {Minimal} {Graphs}}.
\newblock Scuola Normale Superiore, Pisa, 2012.

\bibitem{habermanuniqueness2015}
Boaz Haberman.
\newblock Uniqueness in {Calderón's} problem for conductivities with unbounded
  gradient.
\newblock {\em Communications in Mathematical Physics}, 340(2):639--659, 2015.
\newblock arXiv: 1410.2201.

\bibitem{habermanuniqueness2013}
Boaz Haberman and Daniel Tataru.
\newblock Uniqueness in {Calderon}'s problem with {Lipschitz} conductivities.
\newblock {\em Duke Mathematical Journal}, 162(3), 2013.
\newblock arXiv: 1108.6068.

\bibitem{harrachmonotonicity2019}
Bastian Harrach, Valter Pohjola, and Mikko Salo.
\newblock Monotonicity and local uniqueness for the {Helmholtz} equation.
\newblock {\em Analysis \& PDE}, 12(7):1741--1771, 2019.

\bibitem{hahnernew2001}
Peter Hähner and Thorsten Hohage.
\newblock New {Stability} {Estimates} for the {Inverse} {Acoustic}
  {Inhomogeneous} {Medium} {Problem} and {Applications}.
\newblock {\em SIAM Journal on Mathematical Analysis}, 33(3):670--685, 2001.

\bibitem{hormanderlinear1963}
Lars Hörmander.
\newblock {\em Linear {Partial} {Differential} {Operators}}.
\newblock Springer Berlin Heidelberg, Berlin, Heidelberg, 1963.

\bibitem{hormanderuniqueness1983}
Lars Hörmander.
\newblock Uniqueness theorems for second order elliptic differential equations.
\newblock {\em Communications in Partial Differential Equations}, 8(1):21--64,
  1983.

\bibitem{isakovuniqueness1988}
Victor Isakov.
\newblock On uniqueness of recovery of a discontinuous conductivity
  coefficient.
\newblock {\em Communications on Pure and Applied Mathematics}, 41(7):865--877,
  1988.

\bibitem{jerisoninhomogeneous1995}
Daniel Jerison and Carlos Kenig.
\newblock The inhomogeneous dirichlet problem in lipschitz domains.
\newblock {\em Journal of Functional Analysis}, 130(1):161--219, 1995.

\bibitem{katocommutator1988}
Tosio Kato and Gustavo Ponce.
\newblock Commutator estimates and the euler and navier-stokes equations.
\newblock {\em Communications on Pure and Applied Mathematics}, 41(7):891--907,
  1988.

\bibitem{keniguniform1987}
C.~E. Kenig, A.~Ruiz, and C.~D. Sogge.
\newblock Uniform {Sobolev} inequalities and unique continuation for second
  order constant coefficient differential operators.
\newblock {\em Duke Mathematical Journal}, 55(2), June 1987.

\bibitem{Kenig1993}
Carlos~E. Kenig, Gustavo Ponce, and Luis Vega.
\newblock Well-posedness and scattering results for the generalized korteweg-de
  vries equation via the contraction principle.
\newblock {\em Communications on Pure and Applied Mathematics}, 46(4):527--620,
  1993.

\bibitem{nguyen}
Hoai-Minh Nguyen and Daniel Spirn.
\newblock Recovering a potential from cauchy data via complex geometrical
  optics solutions.
\newblock {\em arXiv}, 2014.

\bibitem{ruizlocalnodate}
Alberto Ruiz and Luis Vega.
\newblock {On local regularity of Schrödinger equations}.
\newblock {\em International Mathematics Research Notices}, 1993(1):13--27, 01
  1993.

\bibitem{schoteighty1992}
Steven~H Schot.
\newblock Eighty years of {Sommerfeld}'s radiation condition.
\newblock {\em Historia Mathematica}, 19(4):385--401, 1992.

\bibitem{Stefanov1990}
Plamen Stefanov.
\newblock Stability of the inverse problem in potential scattering at fixed
  energy.
\newblock {\em Annales de l'institut Fourier}, 40(4):867--884, 1990.

\bibitem{sylvesterglobal1987}
John Sylvester and Gunther Uhlmann.
\newblock A {Global} {Uniqueness} {Theorem} for an {Inverse} {Boundary} {Value}
  {Problem}.
\newblock {\em The Annals of Mathematics}, 125(1):153, 1987.

\bibitem{triebelinterpolation1978}
Hans Triebel.
\newblock {\em Interpolation {Theory}, {Function} {Spaces}, {Differential}
  {Operators}}, volume~18 of {\em North-{Holland} {Mathematical} {Library}}.
\newblock Elsevier, 1978.

\bibitem{triebeltheory1992}
Hans Triebel.
\newblock {\em Theory of {Function} {Spaces} {II}}.
\newblock Springer Basel, Basel, 1992.

\end{thebibliography}
\end{document}